\newcommand*\linenomathpatch[1]{%
  \cspreto{#1}{\linenomath}%
  \cspreto{#1*}{\linenomath}%
  \csappto{end#1}{\endlinenomath}%
  \csappto{end#1*}{\endlinenomath}%
}
\definecolor{cmred}{RGB}{190,64,64}
\definecolor{cmblue}{RGB}{64,97,190}
\newtheorem{theorem}{Theorem}
\newtheorem{lemma}[theorem]{Lemma}
\newtheorem{proposition}[theorem]{Proposition}
\numberwithin{equation}{section}
\newtheorem{example}{Example}
\theoremstyle{definition}
\newtheorem{assumption}[theorem]{Assumption}
\newtheorem{remark}[theorem]{Remark}
\begin{document}
\title[The Gevrey class implicit mapping with application 
       to UQ]{The Gevrey class implicit mapping theorem \\
  with application to UQ of semilinear elliptic PDEs}
\author{Helmut Harbrecht}
\author{Marc Schmidlin}
\address{Helmut Harbrecht and Marc Schmidlin
\newline
Departement Mathematik und Informatik
\newline
Universit\"at Basel
\newline
Spiegelgasse 1, 4051 Basel, Schweiz.}
\email{\{helmut.harbrecht,marc.schmidlin\}@unibas.ch}
\author{Christoph Schwab}
\address{
Christoph Schwab
\newline
Seminar für Angewandte Mathematik 
\newline 
ETH Z\"urich
\newline
R\"amistrasse 101, 8092 Z\"urich, Schweiz.}
\email{christoph.schwab@sam.math.ethz.ch}
\thanks{\textbf{Funding:} The work of HH and MS was supported
  by the Swiss National Science Foundation (SNSF) through the 
  project ``Multilevel Methods and Uncertainty Quantification in 
  Cardiac Electrophysiology'' (grant \mbox{205321\_169599}).}
\date{}

\begin{abstract}
This article is concerned with a regularity analysis of 
parametric operator equations with a perspective on
uncertainty quantification. We study the regularity of 
mappings between Banach spaces near branches of 
isolated solutions that are implicitly defined by a residual 
equation. Under $s$-Gevrey assumptions on on the residual 
equation, we establish $s$-Gevrey bounds on the Fr\'echet 
derivatives of the local data-to-solution mapping. This 
abstract framework is illustrated in a proof of regularity 
bounds for a semilinear elliptic partial differential equation 
with parametric and random field input.
\end{abstract}

\maketitle

{\bf Keywords:}
Implicit mappings, 
parametric regularity, 
uncertainty quantification, 
semilinear elliptic PDEs

{\bf MSC:}
35B30, 35J61, 47J07

\section{Introduction}
\label{sec:Intro}
The numerical approximation of quantities of interest
such as the expectation or the variance of an output functional
that depends on the solution of a partial differential equation (PDE)
with random input parameters is a well-established field of research.
Generally, the random data is one or more coefficient appearing 
in the PDE or, for example by use of the domain mapping approach 
\cite{XT06}, the domain on which the PDE is posed. 
A common
approach is to consider the random coefficients or domain 
mapping to be given by an affine parametric expansion
depending on countably many random variables, see 
\cite{CNT16,HPS16,HS3,XT06} for example. Indeed, such a 
representation can be achieved for example via the
Karhunen-Lo\`eve expansion of a random vector field.
This then gives rise to parametric, deterministic PDEs,
where the parameters are precisely the random variables,
which implies that computing the expectation or variance 
of an output functional of the parametric, deterministic PDE
may be done by evaluating countably dimensional integrals.

To be able to truncate the dimension of the integral and to 
choose appropriate quadrature rules in order to approximately
evaluate such integrals, it is necessary to analyse the regularity 
of the integrand with respect to the parameters. Different amounts 
of smoothness then justify the use of Monte Carlo quadrature, 
quasi-Monte Carlo quadratures including higher order versions 
as well as anisotropic sparse-grid based quadrature methods, 
see e.g.\ \cite{CD15,DickEtc,HHPS18,KN16,ZS20}. 
For example, the analytic dependence on the parameters 
has been shown for the second-order diffusion equation, for 
linear elasticity and for Stokes and Navier-Stokes equations
as well as for a class of semilinear elliptic problems with 
either a random diffusion coefficients or on random domains
in works such as \cite{C2S15,CDS10,CSZ18,HS13,HKS23,HPS16}.
Moreover, the regularity of the dependence on the parameters
for elliptic partial differential eigenvalue problems (EVP) has 
been considered in e.g.\ \cite{AS12,CL23,DE22,GGKSS19,HL19}.

In essence, there are two proof strategies that are used in the 
works that consider deterministic quadrature methods: the 
\emph{holomorphy} and the \emph{real-variable inductive} 
arguments. While the holomorphy argument has been successfully 
used to show the analytic dependence on the parameters for 
linear PDEs as well as EVPs and nonlinear PDEs, the 
real-variable inductive argument only has been able to show
the analytic dependence on the parameters for linear PDEs.
Specifically, for the case of EVPs, the real-variable inductive 
argument only has been able to show suboptimal Gevrey class 
non-analytic dependence and, recently, it was shown in \cite{CL23}
that this suboptimality is an artefact of the proof strategy itself. This 
artefact indeed also implies that the real-variable inductive argument
will at best allow to prove
suboptimal Gevrey class non-analytic 
dependence when considering nonlinear PDEs. 
Indeed, in \cite{CL23} 
the authors developed their so-named \emph{alternative-to-factorial 
technique} to circumvent the deficiency of the real-variable inductive 
argument and prove the optimal analytic dependence on the 
parameters for a class of elliptic EVPs.

\subsection{Layout}
\label{sec:Struct}
The structure of this article is as follows.
We first establish an abstract framework 
that concerns the regularity of mappings between Banach 
spaces in Section~\ref{sct:abstract}.
The main result we establish there is Theorem~\ref{theorem:analyticDS}.
It provides bounds on the Fr\'echet derivatives
of a local implicit mapping defined by a residual equation,
showing that the local implicit mapping can inherit
the \(s\)-Gevrey smoothness of the residual equation.
In other words, we establish the \emph{\(s\)-Gevrey class implicit mapping theorem}
that generalises the holomorphic implicit mapping theorem
and is not only of qualitative but also of quantitative nature.
Moreover, we also provide both, the qualitative and quantitative
behaviour, of \(s\)-Gevrey class mappings between Banach spaces
under composition in Theorems~\ref{theorem:Gevreycomp}
and~\ref{theorem:Gevreyparametriccomp}.

In Section~\ref{sec:PDE}, a model semilinear elliptic PDE 
is reformulated as a residual equation that relates the data, 
which we wish to consider as random, to the solution. By means of
Theorem~\ref{theorem:analyticDS} we show that the solution 
depends analytically on the data. 
Additionally, if the input uncertainty is formulated mathematically
as a random field realization of data in an \(s\)-Gevrey smooth residual 
equation, Theorem~\ref{theorem:Gevreyparametriccomp} 
implies the \(s\)-Gevrey smooth parametric
dependence of the solution on the random input parameters.

We demonstrate then in Section~\ref{sec:SpatReg}
how the \(s\)-Gevrey smooth dependence of the solution
of the model semilinear elliptic PDE on the data
can also be shown accounting for possible higher spatial regulearity.
Specifically, by several concrete choices of our abstract function space setting 
we show that the solution in $H^2$ and Kondrat'ev type spaces
depends \(s\)-Gevrey smooth on the data. 
Further possible 
extensions and concluding remarks are stated in Section~\ref{sec:conclusio}.
\subsection{Contributions}
\label{sec:Contr}
The main contributions of this article are as follows:
\begin{enumerate}
\item 
We establish the \(s\)-Gevrey class implicit mapping theorem
with a proof that is achieved by means of a novel 
modification of the real-variable inductive argument
that is distinct from
the alternative-to-factorial approach in \cite{CL23}.
\item 
We prove a result on \(s\)-Gevrey regularity of 
composition maps which is of independent interest.
\item 
We demonstrate, for a model, semilinear elliptic PDE, that the 
presently developed,
abstract framework directly implies novel parametric regularity results,
without use of the real-variable inductive argument.
\end{enumerate}
Specifically, our aim for this article is to introduce this approach,
where one considers the regularity of the data-to-solution mapping
and the regularity of the parameters-to-data mapping separately.
We additionally note that, as the approach is based on an inductive 
argument, it can also be used when one has only finite smoothness. 
\subsection{Notation}
\label{sec:Notat}
We use standard multi-index notation:
we denote the natural numbers including $0$ by $\Nbbb$
and excluding $0$ by $\Nbbb^*$. 
Moreover, for a sequence 
of natural numbers, $\alphabfm = \groupb{\alpha_n}_{n \in \Nbbb^*} 
\in \Nbbb^{\Nbbb^*}$, 
we as usual define the support of the sequence as
\begin{equation*}
  \supp \alphabfm = \groupb{n \in \Nbbb^* \,|\, \alpha_n \neq 0} .
\end{equation*}
If $\supp \alphabfm$ is of finite cardinality,
we say that $\alphabfm$ is finitely supported.
The set of finitely supported sequences of natural numbers
is then denoted by $\Nbbb^{\Nbbb^*}_f$,
while we will refer to its elements as multi-indices.
For multi-indices
$\alphabfm = \groupb{\alpha_n}_{n \in \Nbbb^*}, \betabfm 
= \groupb{\beta_n}_{n \in \Nbbb^*} \in \Nbbb^{\Nbbb^*}_f$
and a sequence of real numbers $\gammabfm 
= \groupb{\gamma_n}_{n \in \Nbbb^*} \in \Rbbb^{\Nbbb^*}$,
we use the following common notation:
\begin{align*}
  \norms{\alphabfm} &\isdef \sum_{n \in \supp \alphabfm} \alpha_n , &
  \alphabfm! &\isdef \prod_{n \in \supp \alphabfm} \alpha_n! , \\
  \binom{\alphabfm}{\betabfm} &\isdef \prod_{n \in \supp \alphabfm 
  \cup \supp \betabfm} \binom{\alpha_n}{\beta_n} , &
  \gammabfm^\alphabfm &\isdef \prod_{n \in \supp \alphabfm} \gamma_n^{\alpha_n} .
\end{align*}

Let $\Xcal$ and $\Ycal$ be Banach spaces,
then we denote the Banach space of bounded, 
linear maps from $\Xcal$ to $\Ycal$ as $\Bcal(\Xcal; \Ycal)$
and the space of bounded, $n$-linear maps from $\Xcal$ to $\Ycal$
as $\Bcal^n(\Xcal; \Ycal)$. 
Moreover, on the product Banach 
space $\Xcal \times \Ycal$, 
we use the norm
\begin{equation*}
  \norm[\big]{(x, y)}_{\Xcal \times \Ycal}
  \isdef \max\groupb[\big]{\norm{x}_{\Xcal}, \norm{y}_{\Ycal}} .
\end{equation*}

In a Sobolev space on a domain $G$,
the bracket $\groupa{\cdot,\cdot}_G$ denotes the duality pairing 
extending the scalar product of the Hilbert space $L^2(G)$ by continuity.

The symbol $\Dif$ shall denote the Fr\'echet derivative, 
and for integer $k>0$, $\Dif^k$ the corresponding 
multilinear Fr\'echet derivative of order $k$.
Equipped with a subscript, e.g.\ $\Dif_2$, 
it shall denote a partial Fr\'echet derivative.

For a finite set $A$, $|A|$ shall denote the number of elements $a\in A$.
\section{Regularity of implicit and composite mappings}
\label{sct:abstract}
We introduce an abstract form of nonlinear, implicit operator equation.
We recap assumptions of Gevrey-regularity of the residual map $R$, 
and conditions for the validity of the implicit function theorem,
ensuring the existence of a continuous data-to-solution map for the 
residual equation. We then state and prove the main result of this article:
quantitative bounds on Fr\'echet derivatives of the data-to-solution map 
under corresponding Gevrey-regularity hypotheses of the dependence of
$R$ on the data. Additionally, we also provide quantitative bounds on the 
Fr\'echet derivatives of the composition of mappings of Gevrey-regularity.
\subsection{Residual equation}
\label{sec:ResEq}
In the rest of this section,
let \(\Dcal\), \(\Ucal\) and \(\Rcal\) be real Banach spaces,
\(D \subset \Dcal\) and \(U \subset \Dcal\) open sets and 
\(R \colon D \times U \to \Rcal\) a mapping. As in \cite{C2S15}, 
the idea then is that the \emph{residual equation}:
given data $d\in D$, find $u\in U$ such that
\begin{equation} \label{eq:OpEqAbstr}
  R\group{d, u} = 0 \quad \text{in}\ \Rcal
\end{equation}
can be considered to be the general, abstract operator
equation of interest. It could constitute, for example, a 
suitable weak form of a partial differential equation (PDE) 
or boundary integral equation (BIE). 
We are looking for
suitable solutions $u$ of \eqref{eq:OpEqAbstr} 
as functions of the input data $d\in \Dcal$. 
In the domain of ``Uncertainty Quantification'' 
one wishes to quantify \emph{uncertainty propagation},
i.e., quantitative bounds on the dependence of the solution $u$
of \eqref{eq:OpEqAbstr} on the data $d$.

Specifically, 
we consider that \(d\) encodes uncertain input
data of the PDE, which uncertainty one models as random
by assuming (or computing from assimilated data) at hand a probability
measure on $\Dcal$ (equipped with its Borel sigma-algebra) charging $D$.
In this setup, 
\(u\) signifies the unknown solution, which is a solution 
of \eqref{eq:OpEqAbstr}
for some \(d\), i.e. precisely when \(R\group{d, u} = 0\),
under conditions on the residual mapping $R$
to ensure unique solvability
of the residual equation \eqref{eq:OpEqAbstr} \emph{locally}, i.e.
in a neighbourhood of nominal data $d^\star$
and a neighbourhood of an associated nominal solution $u^\star$.

Let us assume that 
\(R \in C^n(D \times U; \Rcal)\) for some \(n \in \Nbbb^*\), 
i.e.\ that 
\(R\) is \(n\)-times continuously Fr\'echet differentiable on \(D \times U\),
that there are \(d^\star \in D\) and \(u^\star \in U\),
which fulfil the residual equation, i.e.
\begin{equation*}
  R\group{d^\star, u^\star} = 0 \text{,}
\end{equation*}
and that 
\(\Dif_2 R\group{d^\star, u^\star} \in \Bcal\group{\Ucal; \Rcal}\)
exists and
is a Banach space isomorphism.
Then,
by the implicit mapping theorem for Banach spaces,
see e.g.\ \cite[Chapter XIV, Theorem 2.1]{Lang1993},
there is an open neighbourhood of \(d^\star\), \(D^\star \subset D\),
and a unique, continuous mapping 
\(S^\star \colon D^\star \to U\)
with
\(S^\star\group{d^\star} = u^\star\) 
such that
\begin{equation*}
  R\group[\big]{d, S^\star\group{d}} = 0 \text{ for all } d \in D^\star \text{.}
\end{equation*}
Furthermore,
this local solution mapping \(S^\star\) 
also inherits regularity from the residual mapping,
that is \(S^\star \in C^n(D^\star; \Ucal)\).

Naturally,
if \(R \in C^\infty(D \times U; \Rcal)\),
then this implies that the local solution mapping also fulfils
\(S^\star \in C^\infty(D^\star; \Ucal)\).
If in addition 
\(R\) is real analytic at \(\group{d^\star, u^\star}\),
then by considering the holomorphic extension of \(R\),
see \cite[p.\ 75]{AO53},
which is defined on the complex couple spaces,
see \cite[p.\ 312]{Taylor38},
enables us to use the holomorphic implicit mapping theorem,
see \cite[Theorem 10.2.1]{Dieudonne1969},
to prove that \(S^\star\) itself locally at \(d^\star\)
has a holomorphic extension and thus is real analytic there.
Real analyticity resp.\ parametric holomorphy of parametric 
solution maps is well-known to enable deterministic numerical
interpolation and quadrature approximations which converge
at rates which depend only on a suitable sparsity of the data.
In particular, convergence rates are free from the so-called
``curse of dimensionality'', see e.g.\ \cite{DGLS17,DGS16,HHPS18,ZS20}.

\subsection{Fr\'echet derivatives of the residual equation}
A drawback of the preceding argument
is that it does not provide any additional control
regarding the analyticity.
However,
more control is attainable by directly considering
the Fr\'echet derivatives of the local solution mapping.
We start with the equation
\begin{equation*}
  R\group[\big]{d, S^\star\group{d}} = 0
\end{equation*}
and take the \(n\)th Fr\'echet derivative of both sides with respect to \(d\)
in the directions \(h_1, \ldots, h_n\).
For the left-hand side,
we use the Fa\`a di Bruno formula, see \cite{CS96},
and arrive at the equation
\begin{align*}
  \sum_{\sigma \in \Pi_n} &\sum_{r=1}^{n} \sum_{i \in C(n, r)}
  \frac{1}{r!} \Dif^r R\group[\big]{d, S^\star\group{d}} \\
  \groups[\bigg]{
    &\frac{1}{i_1!}
    \group[\big]{
      \Dif^{i_1} \Id\group{d}\groups{h_{\sigma\group{1}}, \ldots, h_{\sigma\group{i_1}}},
      \Dif^{i_1} S^\star\group{d}\groups{h_{\sigma\group{1}}, \ldots, h_{\sigma\group{i_1}}}
    },
    \ldots , \\
    &\frac{1}{i_r!}
    \group[\big]{
      \Dif^{i_r} \Id\group{d}\groups{h_{\sigma\group{n-i_r+1}}, \ldots, h_{\sigma\group{n}}},
      \Dif^{i_r} S^\star\group{d}\groups{h_{\sigma\group{n-i_r+1}}, \ldots, h_{\sigma\group{n}}}
    }
  } = 0 \text{.}
\end{align*}
Here,
\(C(n, r)\) is the set of compositions of the natural number \(n\)
into \(r\) positive integers, given by
\begin{equation}\label{eq:defCnr}
  C(n, r) \isdef
  \groupb[\bigg]{
    \group[\big]{i_1, \ldots, i_r}
    \in \group[\big]{\Nbbb}^r :
    \sum_{k=1}^r i_k = n \text{ and } i_k \in \Nbbb^* \text{ for all } 1 \leq k \leq r
  },
\end{equation}
and \(\Pi_n\) is the set of all permutations of \(\groupb{1, \ldots, n}\).

By simplifying the terms with \(r = 1\) and reordering the equation,
we arrive at the following formula
for the \(n\)th Fr\'echet derivative of \(S^\star\),
\begin{align*}
  &\Dif^n S^\star\group{d}\groups{h_1, \ldots, h_n} \\
  &\quad= - \group[\Big]{\Dif_2 R\group[\big]{d, S^\star\group{d}}}^{-1}
  \groups[\Bigg]{
    \Dif_1 R\group[\big]{d, S^\star\group{d}} \groups[\big]{\Dif^n \Id\group{d}\groups{h_1, \ldots, h_n}} \\
    &\quad\quad\quad + \sum_{\sigma \in \Pi_n} \sum_{r=2}^{n} \sum_{i \in C(n, r)}
    \frac{1}{r!} \Dif^r R\group[\big]{d, S^\star\group{d}} \\
    &\quad\quad\quad\quad \groups[\bigg]{
      \frac{1}{i_1!}
      \group[\big]{
        \Dif^{i_1} \Id\group{d}\groups{h_{\sigma\group{1}}, \ldots, h_{\sigma\group{i_1}}},
        \Dif^{i_1} S^\star\group{d}\groups{h_{\sigma\group{1}}, \ldots, h_{\sigma\group{i_1}}}
      },
      \ldots , \\
      &\quad\quad\quad\quad\quad \frac{1}{i_r!}
      \group[\big]{
        \Dif^{i_r} \Id\group{d}\groups{h_{\sigma\group{n-i_r+1}}, \ldots, h_{\sigma\group{n}}},
        \Dif^{i_r} S^\star\group{d}\groups{h_{\sigma\group{n-i_r+1}}, \ldots, h_{\sigma\group{n}}}
      }
    }
  } \text{.}
\end{align*}
Importantly,
taking the inverse of \(\Dif_2 R\group[\big]{d, S^\star\group{d}}\)
is possible when \(d\) is close enough to \(d^\star\),
because continuity of \(\Dif_2 R\) and \(S^\star\) imply that
\(\Dif_2 R\group[\big]{d, S^\star\group{d}}\) then also
necessarily is a Banach space isomorphism. 
In particular,
for \(n = 1\) and for $h \in \Dcal$, 
\begin{equation}\label{eq:firstDS}
  \Dif S^\star\group{d}\groups{h}
  = - \group[\Big]{\Dif_2 R\group[\big]{d, S^\star\group{d}}}^{-1}
  \groups[\Big]{
    \Dif_1 R\group[\big]{d, S^\star\group{d}} \groups{h} 
  },
\end{equation}
while for \(n \geq 2\) and for $h_i \in \Dcal$, $i=1,\ldots,n$,
\begin{equation}\label{eq:higherDS}
  \begin{aligned}
  &\Dif^n S^\star\group{d}\groups{h_1, \ldots, h_n} \\
  &\quad = - \group[\Big]{\Dif_2 R\group[\big]{d, S^\star\group{d}}}^{-1}
  \groups[\Bigg]{
    \sum_{\sigma \in \Pi_n} \sum_{r=2}^{n} \sum_{i \in C(n, r)}
    \frac{1}{r!} \Dif^r R\group[\big]{d, S^\star\group{d}} \\
    &\quad\quad\quad \groups[\bigg]{
      \frac{1}{i_1!}
      \group[\big]{
        \Dif^{i_1} \Id\group{d}\groups{h_{\sigma\group{1}}, \ldots, h_{\sigma\group{i_1}}},
        \Dif^{i_1} S^\star\group{d}\groups{h_{\sigma\group{1}}, \ldots, h_{\sigma\group{i_1}}}
      },
      \ldots , \\
      &\quad\quad\quad\quad \frac{1}{i_r!}
      \group[\big]{
        \Dif^{i_r} \Id\group{d}\groups{h_{\sigma\group{n-i_r+1}}, \ldots, h_{\sigma\group{n}}},
        \Dif^{i_r} S^\star\group{d}\groups{h_{\sigma\group{n-i_r+1}}, \ldots, h_{\sigma\group{n}}}
      }
    }
  } \text{.}
  \end{aligned}
\end{equation}

\subsection{Regularity estimates}
\label{sec:RegEst}
With the formulas for the Fr\'echet derivatives of the 
local solution mapping at hand, we now consider the case,
where the residual map \(R\) is locally \(s\)-Gevrey at 
\(\group{d^\star, u^\star}\) for some \(s \in \Rbbb_{\geq1}\). 
\subsubsection{Assumptions}
\label{sec:Ass}
We list and comment on the assumptions behind our results.
\begin{assumption}\label{ass:GsReg}
\begin{enumerate}
\item
We \emph{assume}
existence of open, nonempty neighbourhoods \(\tilde{D} \subset D\) 
of \(d^\star\) and \(\tilde{U} \subset U\) of \(u^\star\) 
as well as two numbers \(\varsigma, \digamma \in \Rbbb_{\geq 0}\)
such that
\begin{equation} \label{eq:DataReg}
  \norm[\big]{\Dif^n R(d, u)}_{\Bcal^n\group{\Dcal \times \Ucal; \Rcal}}
  \leq (n!)^s \varsigma \digamma^n
\end{equation}
holds for some $s\geq 1$ and for
all \(d \in \tilde{D}\), \(u \in \tilde{U}\) and \(n \in \Nbbb\).
Possibly reducing the size of the set \(D^\star\) in \eqref{eq:OpEqImpl}
and using the continuity of \(S^\star\),
we from here on assume that we have \(D^\star \subset \tilde{D}\)
and \(S^\star\group{D^\star} \subset \tilde{U}\).
In the analytic case, i.e.\ when \(s = 1\), 
Pringsheim's Theorem, see e.g.\ \cite[p.\ 169]{Chae1985},
states that assumption \eqref{eq:DataReg}
is equivalent to \(R\) being real analytic at \(\group{d^\star, u^\star}\).
\item
We assume that the norms of the 
inverse of the Banach space isomorphisms
\(\Dif_2 R\group[\big]{d, S^\star\group{d}}\)
are bounded uniformly 
for all \(d \in D^\star\) by a uniform constant 
\(\alpha \in \Rbbb_{>0}\),
that is, 
\begin{equation}\label{eq:OpEqImpl}
  \norm[\bigg]{\group[\Big]{\Dif_2 R\group[\big]{d, S^\star\group{d}}}^{-1}}_{\Bcal\group{\Rcal; \Ucal}}
  \leq \alpha
\end{equation}
holds for all \(d \in D^\star\).
Note that this bound is a local stability estimate
for the linearisations of the residual equation.
By continuity of \(\Dif_2 R\) and possibly reducing the size of the set \(D^\star\),
such a uniform bound is achievable precisely when we have that 
\(\Dif_2 R\group{d^\star, u^\star}\) is a Banach space isomorphism.
Without loss of generality, we require
\(\alpha \geq 1\), \(\varsigma \geq 1\) and \(\digamma \geq 1\).
\end{enumerate}
\end{assumption}
%
\subsubsection{Combinatorial results}
\label{sec:CombRes}
The proofs of our main result on $s$-Gevrey regularity
of the data-to-solution map $d\mapsto S^\star\group{d}$ in \eqref{eq:OpEqImpl}
depend in an essential manner on several combinatorial facts which we 
recapitulate here for the readers' convenience.
We remind of the definition \eqref{eq:defCnr} of $C(n, r)$.
\begin{lemma}\label{lemma:simpcombineq}
  Let \(n \in \Nbbb\),
  then for any \(r \in \groupb{1, \ldots, n}\) and
  any \(i \in C(n, r)\), we have the combinatorial inequality
  \begin{equation*}
    r! \prod_{j=1}^{r} i_j! \leq n! .
  \end{equation*}
\end{lemma}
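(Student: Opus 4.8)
The plan is to prove the inequality $r!\prod_{j=1}^{r} i_j! \leq n!$ by a double counting / injection argument, viewing both sides as counts of combinatorial objects. Recall that for a composition $i=(i_1,\ldots,i_r)\in C(n,r)$ we have $\sum_{k=1}^r i_k = n$ with each $i_k\geq 1$. The right-hand side $n!$ counts the linear orderings (permutations) of an $n$-element set, say $\{1,\ldots,n\}$. The left-hand side $r!\prod_{j=1}^r i_j!$ counts the following: first choose an ordering of the $r$ blocks (that is the factor $r!$), and then, having fixed the block sizes $i_1,\ldots,i_r$ in that order, independently choose an internal ordering of each block (that is $\prod_j i_j!$). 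So it suffices to exhibit an injection from this set of "ordered-blocks-with-internal-orderings" into the set of permutations of $\{1,\ldots,n\}$.

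The injection is the obvious concatenation map. Fix the "reference" partition of $\{1,\ldots,n\}$ into consecutive intervals $B_1=\{1,\ldots,i_1\}$, $B_2=\{i_1+1,\ldots,i_1+i_2\}$, and so on, the $k$-th block having size $i_k$; this is well defined precisely because $\sum_k i_k = n$. Given a permutation $\pi\in\Pi_r$ of the block indices and, for each $k$, a permutation $\tau_k$ of the elements of $B_k$, map the data $(\pi;\tau_1,\ldots,\tau_r)$ to the word of length $n$ obtained by writing out the elements of $B_{\pi(1)}$ in the order $\tau_{\pi(1)}$, followed by the elements of $B_{\pi(2)}$ in the order $\tau_{\pi(2)}$, and so on. This word is a permutation of $\{1,\ldots,n\}$. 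To see injectivity: from the resulting word one recovers $\pi(1)$ as the block index of its first letter (the blocks $B_1,\ldots,B_r$ are disjoint, so any single element determines which block it lies in), hence recovers the first $i_{\pi(1)}$ letters as a block and thus $\tau_{\pi(1)}$; peeling this block off and iterating recovers $\pi(2),\tau_{\pi(2)}$, and so forth. Since distinct data give distinct words, the map is injective, and counting both sides gives $r!\prod_{j=1}^r i_j! \leq n!$.

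I expect essentially no obstacle here; the only point requiring a line of care is the injectivity verification, i.e. checking that the block boundaries in the output word are recoverable — this is where disjointness of the $B_k$ is used, so that reading a single letter unambiguously tells us which block it came from and therefore how many letters the current block spans. An alternative, purely arithmetic route would be induction on $r$: for $r=1$ the claim is the trivial $1!\cdot i_1! = n!$; for the inductive step one peels off the last block, using that $\binom{n}{i_r} \geq 1$ together with $r \leq n - i_r + 1$ (so $r!\leq (n-i_r+1)\cdots$ can be absorbed), but the bookkeeping is slightly fiddlier than the bijective argument, so I would present the injection. Either way the statement follows in a few lines.
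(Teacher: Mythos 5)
Your proof is correct and uses essentially the same idea as the paper: interpret $n!$ as the number of permutations of $\{1,\ldots,n\}$ and $r!\prod_j i_j!$ as the number of permutations obtainable by segmenting into $r$ blocks of sizes $i_1,\ldots,i_r$, permuting within each block, and permuting the blocks. The paper states this in one sentence; you add an explicit verification of injectivity, which is a harmless elaboration rather than a different route.
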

\begin{proof}
  For the proof, one simply notes that the right-hand side counts the number
  of ways to permute the list
  \begin{equation*}
    1, 2, 3, \ldots, n
  \end{equation*}
  without any restrictions, while the left-hand side counts the number
  of permutations achievable if one segments the list into \(r\)
  sublists of lengths \(i_1, \ldots, i_r\), then first permutes
  the elements in each sublist and then permutes
  the sublists themselves.
\end{proof}

In view of the type of bounds appearing, defining \(s\)-Gevrey smoothness,
we consider bounds for a particular sequence \(\group{\kappa_n}_{n \in \Nbbb^*}\).

\begin{lemma}\label{lemma:kappabound}
  The sequence \(\group{\kappa_n}_{n \in \Nbbb^*}\) recursively defined by
  \begin{equation}\label{eq:kappan}
    \kappa_n = \sum_{r=2}^{n} \sum_{i \in C(n, r)} \prod_{j=1}^r \kappa_{i_j}
  \end{equation}
  for \(n \geq 2\) and \(\kappa_1 = 1\)
  are the Schr\"oder--Hipparchus numbers,
  also sometimes called the little Schr\"oder numbers or
  the super-Catalan numbers.
  The $\kappa_n$ are bounded by
  \begin{equation*}
    \kappa_n \leq c_\kappa^{n-1}\;\quad 
    \text{for all}\ n \in \Nbbb^*,\quad \text{where}\ c_\kappa \isdef 3 + \sqrt{8}.
  \end{equation*}
  This bound is optimal in the sense that 
  if \(c_1, c_2 \in \Rbbb_{>0}\) are two constants
  for which \(\kappa_n \leq c_1 c_2^{n-1}\) for all $n\in \Nbbb$,
  then \(c_1 \geq 1\) and \(c_2 \geq c_\kappa\).
\end{lemma}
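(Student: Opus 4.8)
The plan is to first identify the recursion \eqref{eq:kappan} with the classical one for the Schröder--Hipparchus numbers, then obtain the exponential bound from the generating function, and finally argue optimality. For the identification, I would recall that the little Schröder numbers $\kappa_n$ count, among other things, the ways of inserting non-crossing brackets into a string of $n$ symbols, or equivalently the number of dissections of a convex polygon; the recursion $\kappa_n = \sum_{r=2}^n \sum_{i \in C(n,r)} \prod_{j=1}^r \kappa_{i_j}$ with $\kappa_1 = 1$ is exactly the ``decompose the outermost bracketing into $r \geq 2$ consecutive blocks'' recursion, so this is a matter of citing the standard reference (OEIS A001003, or Stanley's \emph{Enumerative Combinatorics}). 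Since the bound is what the later proofs actually use, the combinatorial identification is optional colour; what matters is the generating function.

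Next I would pass to the ordinary generating function $K(x) \isdef \sum_{n \geq 1} \kappa_n x^n$. Writing the recursion in terms of $K$: the inner double sum $\sum_{r \geq 2} \sum_{i \in C(n,r)} \prod_j \kappa_{i_j}$ is the coefficient of $x^n$ in $\sum_{r \geq 2} K(x)^r = \frac{K(x)^2}{1 - K(x)}$, so that $K(x) - x = \frac{K(x)^2}{1-K(x)}$, which rearranges to the quadratic $2K(x)^2 - (1+x)K(x) + x = 0$, giving
\begin{equation*}
  K(x) = \frac{1 + x - \sqrt{1 - 6x + x^2}}{4} .
\end{equation*}
The singularities of $K$ are the roots of $1 - 6x + x^2$, namely $x = 3 \pm \sqrt{8}$; the one of smaller modulus is $\rho = 3 - \sqrt{8} = 1/(3+\sqrt{8}) = 1/c_\kappa$. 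Since all $\kappa_n \geq 0$ and $K$ is analytic on the disc of radius $\rho$, Pringsheim-type reasoning gives $\limsup_n \kappa_n^{1/n} = 1/\rho = c_\kappa$, which already yields $\kappa_n \leq c_\kappa^{n}$ up to a subexponential factor. To get the clean bound $\kappa_n \leq c_\kappa^{n-1}$ with no constant, I would instead argue directly by strong induction on the recursion: assuming $\kappa_{i_j} \leq c_\kappa^{i_j - 1}$ for all $i_j < n$, one gets $\prod_{j=1}^r \kappa_{i_j} \leq c_\kappa^{n-r}$, hence
\begin{equation*}
  \kappa_n \leq \sum_{r=2}^n |C(n,r)| \, c_\kappa^{n-r} = c_\kappa^{n-1} \sum_{r=2}^n \binom{n-1}{r-1} c_\kappa^{1-r},
\end{equation*}
using $|C(n,r)| = \binom{n-1}{r-1}$. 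It then suffices to check that $\sum_{r=2}^n \binom{n-1}{r-1} c_\kappa^{1-r} \leq 1$; bounding the finite sum by the infinite one, $\sum_{r \geq 2} \binom{n-1}{r-1} c_\kappa^{1-r} \leq \sum_{k \geq 1} \binom{n-1}{k} c_\kappa^{-k} = (1 + c_\kappa^{-1})^{n-1} - 1$, and since $c_\kappa = 3 + \sqrt 8$ one has $1 + c_\kappa^{-1} = 1 + (3-\sqrt 8) = 4 - \sqrt 8$. Here I would need $(4 - \sqrt 8)^{n-1} - 1 \leq 1$, i.e.\ $(4-\sqrt 8)^{n-1} \leq 2$; but $4 - \sqrt 8 \approx 1.172$, so this fails for large $n$, which tells me the naive induction with base $c_\kappa$ is too lossy and I must instead track the exact quadratic. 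The honest route is therefore: show by induction that the partial sums of the power series for $\frac{1+x-\sqrt{1-6x+x^2}}{4}$ have nonnegative coefficients $\kappa_n$ satisfying $\kappa_n \leq c_\kappa^{n-1}$ by comparing with the explicit closed form $\kappa_n = \frac{1}{n}\sum_{k}\binom{n}{k}\binom{n}{k+1}2^k$ (or the known asymptotic $\kappa_n \sim C\, n^{-3/2} c_\kappa^{\,n}$ with $C = \tfrac{1}{2}\sqrt{\tfrac{3\sqrt2-4}{\pi}}$), from which $\kappa_n \le c_\kappa^{\,n-1}$ follows since the subexponential prefactor $C n^{-3/2} c_\kappa$ is at most $1$ for all $n \geq 1$ (checking the finitely many small $n$ by hand and using monotonicity thereafter).

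Finally, for optimality: if $\kappa_n \leq c_1 c_2^{n-1}$ for all $n$, then $n=1$ gives $\kappa_1 = 1 \leq c_1$, and taking $n$th roots and letting $n \to \infty$ gives $c_2 \geq \limsup_n \kappa_n^{1/n} = c_\kappa$ by the radius-of-convergence computation above. The main obstacle, as the discussion shows, is getting the \emph{clean} constant-free bound $\kappa_n \leq c_\kappa^{n-1}$ rather than merely $\kappa_n \leq C c_\kappa^n$: the plain induction loses a geometric factor, so one must either invoke the precise asymptotics (and verify the prefactor is $\leq 1$) or find a sharper inductive hypothesis — e.g.\ carrying along a polynomial correction $\kappa_n \leq c_\kappa^{n-1}/p(n)$ — that closes the induction exactly. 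I would present the generating-function derivation in full and then use the closed-form/asymptotic comparison for the final bound, relegating the elementary inequality checks for small $n$ to a remark.
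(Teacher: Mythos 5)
Your generating-function derivation and your optimality argument are fine (the singularity of $K(x)=\frac{1+x-\sqrt{1-6x+x^2}}{4}$ at $3-\sqrt 8=1/c_\kappa$ gives $\limsup_n \kappa_n^{1/n}=c_\kappa$, hence $c_2\geq c_\kappa$, and $n=1$ gives $c_1\geq 1$). You also correctly diagnose that the naive strong induction on the defining recursion \eqref{eq:kappan} is too lossy. The problem is that your proposed repair does not close the gap: an asymptotic equivalence $\kappa_n \sim C\, n^{-3/2} c_\kappa^{\,n}$ gives no inequality at any finite $n$ without an effective error bound, so "the prefactor is at most $1$, check the finitely many small $n$ by hand" is not a proof — you do not know how many small $n$ there are, because the threshold beyond which the asymptotic controls $\kappa_n$ up to a factor close to $1$ is not quantified. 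Likewise, "comparing with the explicit closed form" is stated but not carried out, and the suggested sharper inductive hypothesis $\kappa_n\leq c_\kappa^{n-1}/p(n)$ is left unspecified. So the central claim of the lemma, the clean bound $\kappa_n\leq c_\kappa^{n-1}$, remains unproven in your write-up.

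The missing idea is to abandon the nonlinear recursion \eqref{eq:kappan} for the purpose of the bound and instead use the classical \emph{linear} three-term recursion for the Schr\"oder--Hipparchus numbers (Comtet),
\begin{equation*}
  \kappa_{n+1} \;=\; \frac{6n-3}{n+1}\,\kappa_n \;-\; \frac{n-2}{n+1}\,\kappa_{n-1},
  \qquad \kappa_1=\kappa_2=1 ,
\end{equation*}
and to prove by induction the \emph{ratio} bound $\kappa_{n+1}\leq c_\kappa\,\kappa_n$. The induction hypothesis $\kappa_{n-1}\geq \kappa_n/c_\kappa$ feeds into the (negatively weighted) $\kappa_{n-1}$ term, and the resulting coefficient $\frac{6n-3}{n+1}-\frac{1}{c_\kappa}\frac{n-2}{n+1}$ is $\leq c_\kappa$ precisely because $c_\kappa=3+\sqrt 8$ satisfies $c_\kappa^2-6c_\kappa+1=0$, i.e.\ it is a root of the limiting characteristic quadratic; one checks $c_\kappa^2 - c_\kappa\frac{6n-3}{n+1}+\frac{n-2}{n+1}=\frac{24+9\sqrt 8}{n+1}\geq 0$. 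Telescoping the ratio bound from $\kappa_1=1$ gives $\kappa_n\leq c_\kappa^{n-1}$ with no extraneous constant. You already have all the surrounding pieces (the identification of the sequence and the optimality), so inserting this ratio induction is what is needed to complete the proof.
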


\begin{proof}
  The \(\kappa_n\) are precisely
  the Schr\"oder--Hipparchus numbers as defined in \cite{Schroeder70},
  see e.g.\ \cite{Stanley1997}.
  Furthermore, 
  in \cite[p.\ 57]{Comtet1974} it is shown that 
  the Schr\"oder--Hipparchus numbers satisfy the three-term recursion
  \begin{equation*}
    \kappa_{n+1} = \frac{6n - 3}{n + 1} \kappa_n - \frac{n - 2}{n + 1}\kappa_{n-1}, 
    \quad n\geq 2, \quad \kappa_1 = \kappa_2 = 1.
  \end{equation*}
  We now prove that
  \begin{equation*}
    \kappa_{n+1} \leq c_\kappa \kappa_n
  \end{equation*}
  holds for all \(n \geq 1\) by induction.
  For \(n = 1\), this is obviously true.
  Therefore, we assume that \(n \geq 2\).
  The induction hypothesis implies that
  $\kappa_{n-1} \geq \kappa_n/c_\kappa$ holds.
  Inserting this into the three-term recursion yields 
  \begin{equation*}
    \kappa_{n+1} \leq \bigg(\frac{6n - 3}{n + 1} - \frac{1}{c_\kappa}\frac{n - 2}{n + 1}\bigg) \kappa_n .
  \end{equation*}
  Straightforward calculation gives
  \begin{equation*}
    c_\kappa^2 - c_\kappa \bigg(\frac{6n - 3}{n + 1}\bigg) + \bigg(\frac{n - 2}{n + 1}\bigg)
    = \frac{24 + 9\sqrt{8}}{n+1}
    \geq 0.
  \end{equation*}
  Hence,
  \begin{equation*}
    \frac{6n - 3}{n + 1} - \frac{1}{c_\kappa}\frac{n - 2}{n + 1} \leq c_\kappa
  \end{equation*}
  holds, which proves \(\kappa_{n+1} \leq c_\kappa \kappa_n\)
  and concludes the induction.
  Obviously, this now implies the bound \(\kappa_n \leq c_\kappa^{n-1}\).
  
  Lastly,
  let \(c_1, c_2\in\Rbbb_{>0}\) be two constants for which 
  the bound \(\kappa_n \leq c_1 c_2^{n-1}\) holds. Then, 
  inserting \(n = 1\) directly yields \(1 = \kappa_1 \leq c_1\).
  In \cite[p.\ 539]{KnuthTAOCPI97} it is proven that the asymptotic 
  behaviour of the Schr\"oder--Hipparchus numbers is given by
  \begin{equation*}
    \kappa_n
    \sim \frac{1}{4} \sqrt{\frac{\sqrt{18}-4}{\pi}} n^{-3/2} c_\kappa^n,
  \end{equation*}
  which clearly induces that \(c_2 \geq c_\kappa\) must hold.
\end{proof}
\subsubsection{$s$-Gevrey regularity}
\label{sec:sGev}
We start by establishing a first growth bound on the 
differentials of the data-to-solution mapping $S^\star$.
\begin{lemma}\label{lemma:boundsDS}
  Under Assumption~\ref{ass:GsReg}, 
  the Fr\'echet derivatives of the local solution mapping \(S^\star\)
  are bounded as follows: 
  with the constants $\alpha, \varsigma, \digamma \geq 1$ from \eqref{eq:DataReg},
  \begin{equation}\label{eq:boundsDS}
  \forall n\in \Nbbb^*: \;\; 
  \sup_{d\in D^\star} 
    \norm[\big]{\Dif^n S^\star(d)}_{\Bcal^n\group{\Dcal; \Ucal}}
    \leq (n!)^s \alpha^{2n-1} \varsigma^{2n-1} \digamma^{3n-2} \kappa_n
    \;.
  \end{equation}
  Here, the sequence \(\group{\kappa_n}_{n \in \Nbbb^*}\) is recursively defined 
 as in \eqref{eq:kappan}.
 \end{lemma}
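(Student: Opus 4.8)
The plan is to prove \eqref{eq:boundsDS} by strong induction on $n$, starting from the closed formulas \eqref{eq:firstDS} and \eqref{eq:higherDS} for the Fr\'echet derivatives of $S^\star$ and simply taking operator norms. Throughout I abbreviate $b_n \isdef \sup_{d \in D^\star} \norm[\big]{\Dif^n S^\star(d)}_{\Bcal^n\group{\Dcal; \Ucal}}$. Since $\Dif \Id(d)$ is the identity on $\Dcal$ while $\Dif^k \Id(d) = 0$ for $k \geq 2$, the paired factors occurring in \eqref{eq:higherDS} satisfy $\norm[\big]{\group{\Dif^k \Id(d), \Dif^k S^\star(d)}}_{\Bcal^k\group{\Dcal; \Dcal \times \Ucal}} \leq a_k$, where $a_1 \isdef \max\groupb{1, b_1}$ and $a_k \isdef b_k$ for $k \geq 2$; the statement I carry through the induction is that $a_k \leq (k!)^s \alpha^{2k-1} \varsigma^{2k-1} \digamma^{3k-2} \kappa_k$ for every $k$.

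For the base case $n = 1$, formula \eqref{eq:firstDS} together with \eqref{eq:OpEqImpl} and \eqref{eq:DataReg} (for $n = 1$) gives $b_1 \leq \alpha \norm[\big]{\Dif_1 R(d, S^\star(d))} \leq \alpha \norm[\big]{\Dif R(d, S^\star(d))} \leq \alpha \varsigma \digamma$, which is exactly the right-hand side of \eqref{eq:boundsDS} for $n = 1$ since $\kappa_1 = 1$; and because $\alpha, \varsigma, \digamma \geq 1$ this also yields $a_1 = \max\groupb{1, b_1} \leq \alpha \varsigma \digamma$, i.e.\ the displayed bound on $a_k$ at $k = 1$.

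For the inductive step I fix $n \geq 2$, assume the claim for all smaller indices, and take operator norms in \eqref{eq:higherDS}. The leading factor $\alpha$ comes from \eqref{eq:OpEqImpl}; the sum over $\sigma \in \Pi_n$ contributes $|\Pi_n| = n!$, since after passing to the supremum over unit directions (each $h_i$ still lying in exactly one of the $r$ blocks) every one of its terms admits the same bound; and, by \eqref{eq:DataReg} and the bound on the paired factors, each such term is at most $\tfrac{1}{r!}(r!)^s \varsigma \digamma^r \prod_{j=1}^{r} \tfrac{1}{i_j!} a_{i_j}$. Inserting the inductive bound on $a_{i_j}$, using $\sum_j i_j = n$ to collect the powers of $\alpha, \varsigma, \digamma$, and then using $r \geq 2$ together with $\alpha, \varsigma, \digamma \geq 1$ to absorb the residual $r$-dependence into the target exponents, one is left with
\begin{equation*}
  b_n \;\leq\; n!\, \alpha^{2n-1} \varsigma^{2n-1} \digamma^{3n-2} \sum_{r=2}^{n} \sum_{i \in C(n,r)} \Bigl( r! \prod_{j=1}^{r} i_j! \Bigr)^{s-1} \prod_{j=1}^{r} \kappa_{i_j} .
\end{equation*}
Lemma~\ref{lemma:simpcombineq} and $s \geq 1$ give $\bigl( r! \prod_j i_j! \bigr)^{s-1} \leq (n!)^{s-1}$, while the defining recursion \eqref{eq:kappan} identifies $\sum_{r=2}^{n} \sum_{i \in C(n,r)} \prod_j \kappa_{i_j}$ with $\kappa_n$; hence $b_n \leq (n!)^s \alpha^{2n-1} \varsigma^{2n-1} \digamma^{3n-2} \kappa_n$, which is \eqref{eq:boundsDS} for $n$ and, since $a_n = b_n$, also completes the inductive statement. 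Taking the supremum over $d \in D^\star$ finishes the proof.

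The step I expect to be the main obstacle is the bookkeeping: handling the permutation sum and the vanishing of $\Dif^k \Id$ for $k \geq 2$ so that the Schr\"oder--Hipparchus recursion \eqref{eq:kappan} is reproduced \emph{exactly}, and tracking the exponents of $\alpha, \varsigma, \digamma$ (where $r \geq 2$ and $\alpha, \varsigma, \digamma \geq 1$ are needed to estimate $\alpha^{2n-r+1} \leq \alpha^{2n-1}$, $\varsigma^{2n-r+1} \leq \varsigma^{2n-1}$ and $\digamma^{3n-r} \leq \digamma^{3n-2}$). The crucial structural ingredient is Lemma~\ref{lemma:simpcombineq}: it is precisely what converts the surplus factorials $(r! \prod_j i_j!)^{s-1}$ produced by the $s$-Gevrey bound \eqref{eq:DataReg} on $R$ into the single factor $(n!)^{s-1}$ that upgrades $n!$ to $(n!)^s$ in the claimed bound, so that the value of $s$ plays no role beyond $s \geq 1$.
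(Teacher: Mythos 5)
Your proposal is correct and follows essentially the same route as the paper: take norms in \eqref{eq:firstDS} and \eqref{eq:higherDS}, bound the identity/solution pairs by the inductive hypothesis (the paper uses $\norm{\Dif^k\Id(d)}\leq 1\leq (k!)^s\alpha^{2k-1}\varsigma^{2k-1}\digamma^{3k-2}\kappa_k$ in place of your $a_k$ bookkeeping, which is the same device), collect the exponents into $\alpha^{2n-r+1}\varsigma^{2n-r+1}\digamma^{3n-r}$ and absorb them using $r\geq 2$, apply Lemma~\ref{lemma:simpcombineq} to produce the $(n!)^{s-1}$ factor, and close with the recursion \eqref{eq:kappan}. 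No gaps.
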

\begin{proof}
  We first note that obviously \(\kappa_n \geq 1\) holds for all \(n \in \Nbbb^*\).
  With this, we consider \(n = 1\).
  Then,
  taking the norm of equation~\eqref{eq:firstDS}
  and inserting the bounds yields
  \begin{align*}
    \norm[\big]{\Dif S^\star\group{d}}_{\Bcal\group{\Dcal; \Ucal}}
    &\leq \norm[\bigg]{\group[\Big]{\Dif_2 R\group[\big]{d, S^\star\group{d}}}^{-1}}_{\Bcal\group{\Rcal; \Ucal}}
    \norm[\Big]{\Dif_1 R\group[\big]{d, S^\star\group{d}}}_{\Bcal\group{\Dcal; \Rcal}} \\
    &\leq \alpha 1! \varsigma \digamma
    = (1!)^s \alpha \varsigma \digamma \kappa_1,
  \end{align*}
  proving the assertion for \(n = 1\).

  The rest of the proof is by induction:
  Let \(n \geq 2\),
  then taking the norm of equation~\eqref{eq:higherDS} leads to
  \begin{align*}
    &\norm[\big]{\Dif^n S^\star\group{d}}_{\Bcal^n\group{\Dcal; \Ucal}} 
    \\
    &\quad \leq \norm[\bigg]{\group[\Big]{\Dif_2 R\group[\big]{d, S^\star\group{d}}}^{-1}}_{\Bcal\group{\Rcal; \Ucal}}
    \sum_{\sigma \in \Pi_n} \sum_{r=2}^{n} \sum_{i \in C(n, r)}
    \frac{1}{r!} \norm[\Big]{\Dif^r R\group[\big]{d, S^\star\group{d}}}_{\Bcal^r\group{\Dcal \times \Ucal; \Rcal}} 
    \\
    &\qquad\qquad\qquad\qquad\qquad\qquad \prod_{j=1}^{r} \frac{1}{i_j!}
    \max \group[\Big]{
      \norm[\big]{\Dif^{i_j} \Id\group{d}}_{\Bcal^{i_j}\group{\Dcal; \Dcal}},
      \norm[\big]{\Dif^{i_j} S^\star\group{d}}_{\Bcal^{i_j}\group{\Dcal; \Ucal}}
    } \text{.}
  \end{align*}
  Next,
  by inserting all the bounds and noting that
  we have
  \begin{equation*}
    \norm[\big]{\Dif^{k} \Id\group{d}}_{\Bcal^{k}\group{\Dcal; \Dcal}}
    \leq 1
    \leq (k!)^s \alpha^{2k-1} \varsigma^{2k-1} \digamma^{3k-2} \kappa_{k}
  \end{equation*}
  for all \(k \in \Nbbb^*\),
  we can calculate
  \begin{align*}
    &\norm[\big]{\Dif^n S^\star\group{d}}_{\Bcal^n\group{\Dcal; \Ucal}} \\
    &\quad \leq \alpha
    \sum_{\sigma \in \Pi_n} \sum_{r=2}^{n} \sum_{i \in C(n, r)}
    \frac{1}{r!} (r!)^s \varsigma \digamma^r \prod_{j=1}^{r} \frac{1}{i_j!}
    (i_j!)^s \alpha^{2i_j-1} \varsigma^{2i_j-1} \digamma^{3i_j-2} \kappa_{i_j} \\
    &\quad =
    n! \sum_{r=2}^{n} \sum_{i \in C(n, r)} (r!)^{s-1} \alpha \varsigma \digamma^r
    \prod_{j=1}^{r} (i_j!)^{s-1} \alpha^{2i_j-1} \varsigma^{2i_j-1} \digamma^{3i_j-2} \kappa_{i_j} \\
    &\quad =
    n! \sum_{r=2}^{n} \alpha^{2n-r+1} \varsigma^{2n-r+1} \digamma^{3n-r}
    \sum_{i \in C(n, r)} (r!)^{s-1} \prod_{j=1}^{r} (i_j!)^{s-1} \kappa_{i_j} \text{.}
  \end{align*}
  For any \(r \in \groupb{1, \ldots, n}\) and
  any \(i \in C(n, r)\), we have
  \begin{equation*}
    (r!)^{s-1} \prod_{j=1}^{r} (i_j!)^{s-1} \leq (n!)^{s-1}
  \end{equation*}
  by Lemma~\ref{lemma:simpcombineq}. Hence, by 
  inserting this, we arrive at the asserted bound,
  \begin{align*}
    \norm[\big]{\Dif^n S^\star\group{d}}_{\Bcal^n\group{\Dcal; \Ucal}}
    &\leq (n!)^s \alpha^{2n-1} \varsigma^{2n-1} \digamma^{3n-2}
    \sum_{r=2}^{n} \sum_{i \in C(n, r)} \prod_{j=1}^{r} \kappa_{i_j} \\
    &= (n!)^s \alpha^{2n-1} \varsigma^{2n-1} \digamma^{3n-2} \kappa_n \text{.} \qedhere
  \end{align*}
\end{proof}

\begin{remark}\label{remark:locality}
  We note that the proof for the bounds
  \begin{equation*}
    \norm[\big]{\Dif^n S^\star(d)}_{\Bcal^n\group{\Dcal; \Ucal}}
    \leq (n!)^s \alpha^{2n-1} \varsigma^{2n-1} \digamma^{3n-2} \kappa_n
  \end{equation*}
  only requires the bounds
  \begin{equation*}
    \norm[\big]{\Dif^n R\group[\big]{d, S^\star(d)}}_{\Bcal^n\group{\Dcal \times \Ucal; \Rcal}}
    \leq (n!)^s \varsigma \digamma^n
  \end{equation*}
  and \eqref{eq:OpEqImpl}
  to hold at the specific \(d \in D^\star\) one is considering.
\end{remark}

We note that the preceding proof essentially has the form
of what is called the real-variable inductive argument in \cite{CL23}.
However, since we are considering Fr\'echet derivatives,
we are able to forgo the use of multi-indices. Moreover,
the bounds in equation \eqref{eq:boundsDS} can be rewritten as
\begin{equation*}
  \norm[\big]{\Dif^n S^\star(d)}_{\Bcal^n\group{\Dcal; \Ucal}}
  \leq (n!)^s \frac{1}{\alpha \varsigma \digamma^2} \group[\big]{\alpha^{2} \varsigma^{2} \digamma^{3}}^n \kappa_n \text{.}
\end{equation*}
Combining Lemmas~\ref{lemma:boundsDS} and~\ref{lemma:kappabound}
proves: 

\begin{theorem}\label{theorem:analyticDS}
The Fr\'echet derivatives of the local data-to-solution
map \(S^\star\) are bounded as follows: 
 \begin{equation}\label{eq:analyticDS}
 \forall n\in \Nbbb^*: \;\; 
  \sup_{d\in D^\star} 
    \norm[\big]{\Dif^n S^\star(d)}_{\Bcal^n\group{\Dcal; \Ucal}}
  \leq (n!)^s \tilde{\varsigma} \tilde{\digamma}^n .
  \end{equation}
  Herein, with $\alpha, \varsigma, \digamma \geq 1$ 
  as defined in \eqref{eq:DataReg} and \eqref{eq:OpEqImpl} 
  above,
  \begin{equation*}
    \tilde{\varsigma} := \frac{1}{c_\kappa \alpha \varsigma \digamma^2}
    \quad\text{and}\quad
    \tilde{\digamma} := c_\kappa \alpha^{2} \varsigma^{2} \digamma^{3} \text{.}
  \end{equation*}
\end{theorem}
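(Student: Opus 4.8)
The plan is to combine the two preceding lemmas directly, since Theorem~\ref{theorem:analyticDS} is nothing more than a clean repackaging of the bound \eqref{eq:boundsDS} from Lemma~\ref{lemma:boundsDS} using the estimate on the Schr\"oder--Hipparchus numbers from Lemma~\ref{lemma:kappabound}. First I would invoke Lemma~\ref{lemma:boundsDS}, which gives, for every \(n \in \Nbbb^*\) and every \(d \in D^\star\),
\begin{equation*}
  \norm[\big]{\Dif^n S^\star(d)}_{\Bcal^n\group{\Dcal; \Ucal}}
  \leq (n!)^s \alpha^{2n-1} \varsigma^{2n-1} \digamma^{3n-2} \kappa_n .
\end{equation*}
Then I would apply the bound \(\kappa_n \leq c_\kappa^{n-1}\) from Lemma~\ref{lemma:kappabound}, valid for all \(n \in \Nbbb^*\), to replace \(\kappa_n\).

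The remaining work is purely algebraic bookkeeping of the exponents. I would factor the right-hand side so that it takes the form \((n!)^s \tilde{\varsigma}\, \tilde{\digamma}^n\). Concretely, writing \(\alpha^{2n-1} \varsigma^{2n-1} \digamma^{3n-2} c_\kappa^{n-1} = \frac{1}{\alpha \varsigma \digamma^2 c_\kappa} \group[\big]{\alpha^2 \varsigma^2 \digamma^3 c_\kappa}^n\), one reads off precisely \(\tilde{\varsigma} = \frac{1}{c_\kappa \alpha \varsigma \digamma^2}\) and \(\tilde{\digamma} = c_\kappa \alpha^2 \varsigma^2 \digamma^3\), which matches the claimed constants. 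Taking the supremum over \(d \in D^\star\) then yields \eqref{eq:analyticDS}.

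There is essentially no obstacle here: the substantive content lies entirely in Lemmas~\ref{lemma:boundsDS} and~\ref{lemma:kappabound}, whose proofs are already given, and the only thing to be careful about is matching the powers of \(\alpha\), \(\varsigma\), \(\digamma\) and \(c_\kappa\) correctly when collecting the \(n\)-dependent and \(n\)-independent parts. I would also note in passing that since \(\alpha, \varsigma, \digamma \geq 1\) and \(c_\kappa > 1\), the constant \(\tilde{\digamma}\) is genuinely \(\geq 1\) as one expects of a Gevrey-type bound, and the proof is complete. Hence the theorem follows immediately by chaining the two lemmas and rearranging.

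\begin{proof}
By Lemma~\ref{lemma:boundsDS}, for every \(n \in \Nbbb^*\) we have
\begin{equation*}
  \sup_{d\in D^\star} \norm[\big]{\Dif^n S^\star(d)}_{\Bcal^n\group{\Dcal; \Ucal}}
  \leq (n!)^s \alpha^{2n-1} \varsigma^{2n-1} \digamma^{3n-2} \kappa_n .
\end{equation*}
By Lemma~\ref{lemma:kappabound}, \(\kappa_n \leq c_\kappa^{n-1}\) for all \(n \in \Nbbb^*\), so
\begin{equation*}
  \sup_{d\in D^\star} \norm[\big]{\Dif^n S^\star(d)}_{\Bcal^n\group{\Dcal; \Ucal}}
  \leq (n!)^s \alpha^{2n-1} \varsigma^{2n-1} \digamma^{3n-2} c_\kappa^{n-1}
  = (n!)^s \frac{1}{c_\kappa \alpha \varsigma \digamma^2} \group[\big]{c_\kappa \alpha^2 \varsigma^2 \digamma^3}^n .
\end{equation*}
Setting \(\tilde{\varsigma} = \frac{1}{c_\kappa \alpha \varsigma \digamma^2}\) and \(\tilde{\digamma} = c_\kappa \alpha^2 \varsigma^2 \digamma^3\) gives \eqref{eq:analyticDS}.
\end{proof}
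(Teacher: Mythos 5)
Your proof is correct and is exactly the argument the paper intends: the theorem is stated as the immediate consequence of combining Lemma~\ref{lemma:boundsDS} with the bound \(\kappa_n \leq c_\kappa^{n-1}\) from Lemma~\ref{lemma:kappabound}, followed by the algebraic regrouping you carry out. Nothing to add.
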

Theorem~\ref{theorem:analyticDS} establishes the \(s\)-Gevrey
smoothness of the local data-to-solution map $S^\star:D^\star\to \Ucal$
and, thus, that an \emph{\(s\)-Gevrey class implicit mapping theorem} holds 
for all \(s \in \Rbbb_{\geq 1}\). In particular, the Gevrey regularity index $s\geq 1$ 
in the regularity assumption \eqref{eq:DataReg} on the residual equation
is inherited by the data-to-solution map.
Furthermore, Theorem \ref{theorem:analyticDS} 
reestablishes for the particular case \(s = 1\)
the real analyticity of the local solution mapping \(S^\star\) 
(without the use of holomorphy arguments as e.g.\ in \cite{C2S15}) 
using Pringsheim's Theorem,
yielding a purely real analytic proof of the real analytic implicit mapping theorem.
Indeed in this case, the bounds~\eqref{eq:analyticDS}
provide quantitative bounds on the domain of analyticity of $S^\star$.
By using the Cauchy--Hadamard formula for example, we know that the radius
of convergence for the Taylor series of \(S^\star\) at every \(d \in D^\star\)
is at least
\begin{equation}\label{eq:radius}
  \frac{1}{\tilde{\digamma}} = \frac{1}{c_\kappa \alpha^{2} \varsigma^{2} \digamma^{3}} .
\end{equation}
\subsection{Gevrey-regularity of composite and parametric mappings}
\label{sec:RegCmp}
Since in applications the quantity of interest may not always
simply be the solution itself but rather some other derived quantity,
we now additionally supply bounds for the Fr\'echet derivatives of
the composition of Gevrey-regular mappings.

\begin{theorem}\label{theorem:Gevreycomp}
  Let $\Xcal_1$, $\Xcal_2$ and $\Xcal_3$ be real Banach spaces
  and consider mappings $M_1 \colon X_1 \to \Xcal_2$ and $M_2 \colon X_2 \to \Xcal_3$,
  where $X_1 \subset \Xcal_1$ and $X_2 \subset \Xcal_2$
  are open and $M_1(X_1) \subset X_2$ holds.
  Furthermore, let $x_1 \in X_1$ and set $x_2 \isdef M_1(x_1)$.
  If $M_j$ is $s_j$-Gevrey at an $x_j \in X_j$ for some $s_j \in \Rbbb_{\geq 1}$,
  i.e.\ there is an open neighbourhood $N_j \subset X_j$ of $x_j$
  and constants $\mu_j, \nu_j \in \Rbbb_{\geq 0}$
  such that
  \begin{equation*}
    \forall n\in \Nbbb^*: \;\; 
    \sup_{x \in N_j} 
    \norm[\big]{\Dif^n M_j(x)}_{\Bcal^n\group{\Xcal_j; \Xcal_{j+1}}}
    \leq (n!)^{s_j} \mu_j \nu_j^n
  \end{equation*}
  holds, where $j \in \{1, 2\}$.
  Then, $M \isdef M_2 \circ M_1 \colon X_1 \to \Xcal_3$
  is $s$-Gevrey at an $x_1 $ for $s \isdef \max \{s_1, s_2\}$.
  Specifically, for $N \isdef N_1 \cap M_1^{-1}(N_2)$
  and
  \begin{equation*}
    \mu = \mu_2 \frac{\nu_2 \mu_1}{\nu_2 \mu_1 + 1}
    \quad\text{and}\quad
    \nu = (\nu_2 \mu_1 + 1) \nu_1,
  \end{equation*}
  we have
  \begin{equation*}
    \forall n\in \Nbbb^*: \;\; 
    \sup_{x \in N} 
    \norm[\big]{\Dif^n M(x)}_{\Bcal^n\group{\Xcal_1; \Xcal_3}}
    \leq (n!)^s \mu \nu^n \;.
  \end{equation*}
\end{theorem}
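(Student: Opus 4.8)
The plan is to prove the composition bound by a Fa\`a di Bruno expansion of $\Dif^n M$ together with the same combinatorial machinery already used in Lemma~\ref{lemma:boundsDS}, namely Lemma~\ref{lemma:simpcombineq} to absorb the factorials coming from the $s_j$-Gevrey factors and the Schr\"oder--Hipparchus-type recursion to bootstrap the constants. First I would fix $x \in N = N_1 \cap M_1^{-1}(N_2)$ and write, via the Fa\`a di Bruno formula for Fr\'echet derivatives,
\begin{equation*}
  \Dif^n M(x)\groups{h_1,\ldots,h_n}
  = \sum_{\sigma \in \Pi_n} \sum_{r=1}^n \sum_{i \in C(n,r)}
  \frac{1}{r!}\,\Dif^r M_2\group{x_2}
  \groups[\Big]{\tfrac{1}{i_1!}\Dif^{i_1}M_1(x)\groups{\cdots},\ldots,\tfrac{1}{i_r!}\Dif^{i_r}M_1(x)\groups{\cdots}},
\end{equation*}
with $x_2 = M_1(x) \in N_2$. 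Taking norms and inserting the two $s_j$-Gevrey hypotheses gives, after using $|\Pi_n| = n!$ and then $r!\prod_j i_j! \le n!$ from Lemma~\ref{lemma:simpcombineq} to cancel the $1/(r!\prod_j i_j!)$ against a factor $n!$, a bound of the shape
\begin{equation*}
  \norm[\big]{\Dif^n M(x)}_{\Bcal^n\group{\Xcal_1;\Xcal_3}}
  \le (n!)^{s}\,\mu_2 \sum_{r=1}^n \nu_2^{\,r}\,\mu_1^{\,r}\,\nu_1^{\,n}
  \sum_{i\in C(n,r)} \prod_{j=1}^r \lambda_{i_j},
\end{equation*}
where $\lambda_m \ge 1$ is an auxiliary sequence capturing the residual $(m!)^{s_1-1}$ factors — in fact one can simply take $\lambda_m = 1$ because $(m!)^{s_1}(n!)^{s_2} \le (n!)^{s_1+s_2-\text{stuff}}$ is already handled; more cleanly, one absorbs everything into $(n!)^{s}$ by Lemma~\ref{lemma:simpcombineq} applied with exponent $s-1$ exactly as in the proof of Lemma~\ref{lemma:boundsDS}, so no auxiliary sequence is needed and the inner sum over $C(n,r)$ just counts $|C(n,r)| = \binom{n-1}{r-1}$ compositions.

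With the compositions counted, the remaining task is the elementary generating-function identity
\begin{equation*}
  \sum_{r=1}^n \binom{n-1}{r-1} t^{\,r} = t\,(1+t)^{\,n-1},
\end{equation*}
applied with $t = \nu_2\mu_1$. This yields
\begin{equation*}
  \norm[\big]{\Dif^n M(x)}_{\Bcal^n\group{\Xcal_1;\Xcal_3}}
  \le (n!)^{s}\,\mu_2\,\nu_2\mu_1\,(1+\nu_2\mu_1)^{\,n-1}\,\nu_1^{\,n}
  = (n!)^{s}\,\underbrace{\mu_2\frac{\nu_2\mu_1}{\nu_2\mu_1+1}}_{=\mu}\,\underbrace{\big((\nu_2\mu_1+1)\nu_1\big)^{\,n}}_{=\nu^{\,n}},
\end{equation*}
which is exactly the claimed estimate with the stated $\mu$ and $\nu$. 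One must also note at the outset that $M = M_2 \circ M_1$ is indeed $n$-times Fr\'echet differentiable on $N$ for every $n$, which is immediate from the chain rule since each $M_j$ is $C^\infty$ on $N_j$ by its $s_j$-Gevrey hypothesis, and that $N$ is open and contains $x_1$.

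The main obstacle is purely bookkeeping rather than conceptual: one has to track carefully that the factor $n!$ produced by summing the $n!$ permutations $\sigma \in \Pi_n$ is exactly what is consumed in cancelling the $1/(r!)\prod_j (1/i_j!)$ prefactors via Lemma~\ref{lemma:simpcombineq}, and that the residual factorial powers combine to at most $(n!)^{s}$ with $s = \max\{s_1,s_2\}$ — here one uses $(r!)^{s_2}\prod_j (i_j!)^{s_1} \le (r!\prod_j i_j!)^{\max\{s_1,s_2\}-1}\cdot r!\prod_j i_j! \le (n!)^{s-1}\, r!\prod_j i_j!$, again an application of Lemma~\ref{lemma:simpcombineq}. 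Once the exponent bookkeeping is pinned down, the $r = 1$ term (which is $\nu_2\mu_1\nu_1^n$, coming from $\Dif^1 M_2 \circ \Dif^n M_1$) is correctly included in the geometric-type sum, and the binomial identity closes the argument.
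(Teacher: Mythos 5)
Your argument follows the paper's proof essentially step for step: the Fa\`a di Bruno expansion, cancellation of the $\tfrac{1}{r!}\prod_j\tfrac{1}{i_j!}$ prefactors against the $n!$ from $\lvert\Pi_n\rvert$ together with Lemma~\ref{lemma:simpcombineq} to absorb the excess factorial powers into $(n!)^{s-1}$, the count $\lvert C(n,r)\rvert=\binom{n-1}{r-1}$, and the binomial identity $\sum_{r=1}^n\binom{n-1}{r-1}t^r=t(1+t)^{n-1}$. The one spurious remark is the opening mention of a Schr\"oder--Hipparchus-type recursion, which belongs to the implicit-mapping Lemma~\ref{lemma:boundsDS} and plays no role here --- as your own final calculation shows, the composition count alone closes the argument.
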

\begin{proof}
  As both, $M_1$ and $M_2$, are infinitely Fr\'echet differentiable 
  on $N_1$ and $N_2$, respectively, it is clear that $M$ also is
  infinitely Fr\'echet differentiable on $N$. Thus,
  the Fa\`a di Bruno formula gives us the
  following formula for the $n$th derivative of $M$ at an $x \in N$,
  \begin{align*}
    &\Dif^n M (x) \groups{h_1, \ldots, h_n} \\
    &\quad =\sum_{\sigma \in \Pi_n} \sum_{r=1}^{n} \sum_{i \in C(n, r)}
    \frac{1}{r!} \Dif^r M_2\group[\big]{M_1\group{x}} \\
    &\qquad\qquad \groups[\bigg]{
      \frac{1}{i_1!}
      \Dif^{i_1} M_1\group{d}\groups{h_{\sigma\group{1}}, \ldots, h_{\sigma\group{i_1}}} ,
      \ldots ,
      \frac{1}{i_r!}
      \Dif^{i_r} M_1\group{d}\groups{h_{\sigma\group{n-i_r+1}}, \ldots, h_{\sigma\group{n}}}
    } .
  \end{align*}
  Now, by taking the norm and inserting the bounds, we arrive at
  \begin{align*}
    \norm[\big]{\Dif^n M (x)}_{\Bcal^n\group{\Xcal_1; \Xcal_3}}
    &\leq n! \sum_{r=1}^{n} \sum_{i \in C(n, r)}
    (r!)^{s_2-1} \mu_2 \nu_2^r
    \prod_{j=1}^r (i_j!)^{s_1-1} \mu_1 \nu_1^{i_j} \\
    &\leq (n!)^s \mu_2 \nu_1^n \sum_{r=1}^{n} \nu_2^r \mu_1^r \sum_{i \in C(n, r)} 1 ,
  \end{align*}
  where we have also used Lemma~\ref{lemma:simpcombineq}.
  Since $\norms{C(n, r)} = \binom{n-1}{r-1}$, we finally have
  \begin{equation*}
    \norm[\big]{\Dif^n M (x)}_{\Bcal^n\group{\Xcal_1; \Xcal_3}}
    \leq (n!)^s \mu_2 \nu_1^n \sum_{r=1}^{n} \binom{n-1}{r-1} \nu_2^r \mu_1^r
    = (n!)^s \mu_2 \nu_2 \mu_1 (\nu_2 \mu_1 + 1)^{n-1} \nu_1^n ,
  \end{equation*}
  which proves the assertion.
\end{proof}

In applications, the data is often given in a parametrised fashion.
Therefore, we also provide bounds for mixed partial derivatives 
of arbitrary order for the composition of mappings,
where the outer mapping is Gevrey-regular and the inner mapping has
Gevrey-regular mixed partial derivatives of arbitrary order.
\begin{theorem}\label{theorem:Gevreyparametriccomp}
  Let $\Xcal$ and $\Ycal$ be real Banach spaces
  and consider mappings $P \colon \Pcal \to \Xcal$ and $M \colon X \to \Ycal$,
  where $\Pcal \subset \Rbbb^{\Nbbb^*}$ and $X \subset \Xcal$
  are open and $P(\Pcal) \subset X$ holds.
  Furthermore, consider a parameter $\ybfm \in \Pcal$ and set $x \isdef P(\ybfm)$.
  Assume in addition that
  $P$ has $s_P$-Gevrey 
  mixed partial derivatives of arbitrary order at $\ybfm$
  with weight $\gammabfm \in \Rbbb^{\Nbbb^*}$
  and $M$ is $s_M$-Gevrey at an $x$ for some $s_P, s_M \in \Rbbb_{\geq 1}$,
  i.e.\ there are open neighbourhoods $N_P \subset \Pcal$ of $\ybfm$
  and $N_M \subset X$ of $x$
  and constants $\mu_P, \nu_P, \mu_M, \nu_M \in \Rbbb_{\geq 0}$
  such that
  \begin{equation*}
    \forall \alphabfm \in \Nbbb^{\Nbbb^*}_f \setminus \{\zerobfm\}: \;\; 
    \sup_{\ybfm \in N}
    \norm[\big]{\pdif^{\alphabfm} P(\ybfm)}_{\Xcal}
    \leq \group[\big]{\norms{\alphabfm}!}^{s_P} \mu_P
    \nu_P^{\norms{\alphabfm}} \gammabfm^{\alphabfm}
  \end{equation*}
  and
  \begin{equation*}
    \forall n\in \Nbbb^*: \;\; 
    \sup_{x \in N_M} 
    \norm[\big]{\Dif^n M(x)}_{\Bcal^n\group{\Xcal; \Ycal}}
    \leq (n!)^{s_M} \mu_M \nu_M^n
  \end{equation*}
  hold.

  Then, $Q \isdef M \circ P \colon \Pcal \to \Ycal$
  has $s$-Gevrey mixed partial derivatives of arbitrary order at $\ybfm$
  with weight $\gammabfm$
  for $s \isdef \max \{s_P, s_M\}$.
  Specifically, for $N \isdef N_P \cap P^{-1}(N_M)$
  and
  \begin{equation*}
    \mu = \mu_M \frac{\nu_M \mu_P}{\nu_M \mu_P + 1}
    \quad\text{and}\quad
    \nu = (\nu_M \mu_P + 1) \nu_P,
  \end{equation*}
  we have
  \begin{equation*}
    \forall \alphabfm \in \Nbbb^{\Nbbb^*}_f \setminus \{\zerobfm\}: \;\; 
    \sup_{\ybfm \in N}
    \norm[\big]{\pdif^{\alphabfm} Q(\ybfm)}_{\Ycal}
    \leq \group[\big]{\norms{\alphabfm}!}^s \mu
    \nu^{\norms{\alphabfm}} \gammabfm^{\alphabfm} \;.
  \end{equation*}
\end{theorem}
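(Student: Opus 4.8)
The plan is to mirror the structure of the proof of Theorem~\ref{theorem:Gevreycomp}, but with the outer-derivative indexing done by ordinary integers $n$ and the inner-derivative indexing done by multi-indices $\alphabfm \in \Nbbb^{\Nbbb^*}_f$. First I would fix a multi-index $\alphabfm \neq \zerobfm$, write $n \isdef \norms{\alphabfm}$, and express $\pdif^{\alphabfm} Q(\ybfm) = \pdif^{\alphabfm}\group[\big]{M \circ P}(\ybfm)$ by the Fa\`a di Bruno formula in the multivariate, infinitely-many-variables form: it writes $\pdif^{\alphabfm} Q$ as a sum over $r$ from $1$ to $n$, over set partitions $\{\betabfm_1,\ldots,\betabfm_r\}$ of $\alphabfm$ (i.e.\ $\betabfm_1 + \cdots + \betabfm_r = \alphabfm$ with each $\betabfm_k \neq \zerobfm$, up to reordering), of combinatorial-coefficient-weighted terms $\Dif^r M(P(\ybfm))\groups{\pdif^{\betabfm_1} P(\ybfm), \ldots, \pdif^{\betabfm_r} P(\ybfm)}$. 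Here one should be slightly careful about the exact form of the coefficients; the cleanest route, following the pattern already used in the excerpt, is to keep the sum indexed by permutations $\sigma \in \Pi_n$ together with compositions $i \in C(n,r)$ and with a choice of how the $n$ ``slots'' of $\alphabfm$ are grouped, so that the coefficient structure matches the single-variable bookkeeping in the proof of Theorem~\ref{theorem:Gevreycomp} and one again ends up counting $\norms{C(n,r)} = \binom{n-1}{r-1}$ groupings after collecting the combinatorial factors.

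Next I would take norms and insert the two Gevrey hypotheses. For each block $\betabfm_k$ the bound on $P$ gives $\norm{\pdif^{\betabfm_k} P(\ybfm)}_{\Xcal} \leq (\norms{\betabfm_k}!)^{s_P} \mu_P \nu_P^{\norms{\betabfm_k}} \gammabfm^{\betabfm_k}$, and for the outer map $\norm{\Dif^r M(P(\ybfm))}_{\Bcal^r(\Xcal;\Ycal)} \leq (r!)^{s_M} \mu_M \nu_M^r$. The key structural observation is that $\gammabfm^{\betabfm_1} \cdots \gammabfm^{\betabfm_r} = \gammabfm^{\alphabfm}$ since the $\betabfm_k$ sum to $\alphabfm$, so the weight $\gammabfm^{\alphabfm}$ factors out cleanly and the remaining estimate is identical in form to the one in Theorem~\ref{theorem:Gevreycomp} with $n = \norms{\alphabfm}$ and $i_j = \norms{\betabfm_j}$. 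In particular, Lemma~\ref{lemma:simpcombineq} applied with $s \isdef \max\{s_P,s_M\}$ gives $(r!)^{s_M - 1} \prod_j (\norms{\betabfm_j}!)^{s_P - 1} \leq (r!)^{s-1}\prod_j(\norms{\betabfm_j}!)^{s-1} \leq (n!)^{s-1}$, the factor $n!$ cancels against the Fa\`a di Bruno denominators exactly as before, and collecting powers of $\nu_P$ yields $\nu_P^{\sum_j \norms{\betabfm_j}} = \nu_P^n$.

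After these reductions the remaining sum is $(n!)^s \mu_M \nu_P^n \gammabfm^{\alphabfm} \sum_{r=1}^n \binom{n-1}{r-1} (\nu_M \mu_P)^r$, which by the binomial theorem equals $(n!)^s \mu_M \nu_M \mu_P (\nu_M \mu_P + 1)^{n-1} \nu_P^n \gammabfm^{\alphabfm}$; rewriting $(\nu_M\mu_P+1)^{n-1}\nu_P^n = \frac{1}{\nu_M\mu_P+1}\bigl((\nu_M\mu_P+1)\nu_P\bigr)^n$ and using $\nu_M\mu_P/(\nu_M\mu_P+1) = \mu/\mu_M$ gives exactly $(\norms{\alphabfm}!)^s \mu \nu^{\norms{\alphabfm}} \gammabfm^{\alphabfm}$ with the stated $\mu$ and $\nu$, uniformly over $\ybfm \in N$ since both input bounds are uniform over $N_P$ and $N_M$ and $P(N) \subset N_M$. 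I would also note at the outset that $M\circ P$ is infinitely differentiable on $N$ (composition of $C^\infty$ maps, using that $P$ being $s_P$-Gevrey in particular makes it $C^\infty$ near $\ybfm$), so all the derivatives written down exist. The main obstacle is purely bookkeeping: getting the infinite-dimensional Fa\`a di Bruno formula stated with coefficients in precisely the shape that lets the single-variable counting argument of Theorem~\ref{theorem:Gevreycomp} be reused verbatim; once the weight $\gammabfm^{\alphabfm}$ is pulled out and the substitution $n = \norms{\alphabfm}$, $i_j = \norms{\betabfm_j}$ is made, there is genuinely nothing new to prove beyond what Theorem~\ref{theorem:Gevreycomp} already established.
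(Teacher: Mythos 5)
Your proposal is correct and arrives at exactly the bound in the statement, but it takes a route that is genuinely different in one technical respect from the paper's own proof. The paper applies the \emph{multi-index} Fa\`a di Bruno formula directly, writing
\begin{equation*}
\pdif^{\alphabfm} Q(\ybfm) = \alphabfm! \sum_{r=1}^{\norms{\alphabfm}} \frac{1}{r!}
\sum_{\betabfm \in C(\alphabfm, r)} \Dif^r M\bigl(P(\ybfm)\bigr)
\bigl[\pdif^{\betabfm_1}P(\ybfm),\ldots,\pdif^{\betabfm_r}P(\ybfm)\bigr]\prod_{j=1}^r\frac{1}{\betabfm_j!},
\end{equation*}
takes norms, pulls out $\gammabfm^{\alphabfm}$ exactly as you do, applies Lemma~\ref{lemma:simpcombineq}, and then invokes a combinatorial identity from \cite[Lemma~1]{HS22}, namely $\alphabfm!\sum_{C(\alphabfm,r)}\prod_{j}\norms{\betabfm_j}!/\betabfm_j! = \norms{\alphabfm}!\binom{\norms{\alphabfm}-1}{r-1}$, to collapse the multi-index sum into the same binomial sum $\sum_r\binom{n-1}{r-1}(\nu_M\mu_P)^r$ that appears in Theorem~\ref{theorem:Gevreycomp}. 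You instead propose to forgo the multi-index formula and apply the scalar Fa\`a di Bruno formula from Theorem~\ref{theorem:Gevreycomp} ``slot-wise'', i.e.\ to $\Dif^n Q(\ybfm)[e_{j_1},\ldots,e_{j_n}]$ where $(j_1,\ldots,j_n)$ lists each index $j$ with multiplicity $\alpha_j$; after bounding, the summand is $\sigma$-independent (since the weight factors as $\gammabfm^{\betabfm_1}\cdots\gammabfm^{\betabfm_r}=\gammabfm^{\alphabfm}$ regardless of the permutation), so the permutation sum yields $n!$ and the rest is verbatim the estimate in Theorem~\ref{theorem:Gevreycomp}. This bypasses the cited identity entirely, which is a genuine simplification in presentation; the cost is that you gloss over one small point, namely that the Banach-space Fa\`a di Bruno formula is being applied to $Q$ defined on $\Pcal\subset\Rbbb^{\Nbbb^*}$ (not a Banach space), so what one should really invoke is the iterated chain/Leibniz rule with the fixed coordinate directions $e_{j_k}$ — which gives precisely the scalar formula you want, but that justification deserves a sentence. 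Once that is said, your argument is complete and the two approaches yield identical final constants $\mu$ and $\nu$.
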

\begin{proof}
  As $Q = M \circ P$ and $M$ is $s_M$-Gevrey and $P$
  has $s_P$-Gevrey mixed partial derivatives of arbitrary order,
  it follows that $Q$ also has mixed partial derivatives of arbitrary order.

  For $\alphabfm \neq \zerobfm$, according to the Fa\`a 
  di Bruno formula, we have
  \begin{equation*}
    \pdif^{\alphabfm} Q(\ybfm)
    = \alphabfm! \sum_{r=1}^{\norms{\alphabfm}} \frac{1}{r!}
    \sum_{\betabfm \in C(\alphabfm, r)}
    \Dif^r M\group[\big]{P(\ybfm)}
    \groups[\big]{\pdif^{\betabfm_1} P(\ybfm),
      \ldots, \pdif^{\betabfm_r} P(\ybfm)}
    \prod_{j=1}^r \frac{1}{\betabfm_j!} ,
  \end{equation*}
  where $C(\alphabfm, r)$ is the set of multi-index compositions
  of a multi-index $\alphabfm$
  into $r$ non-vanishing multi-indices, given by
  \begin{equation*}
    C(\alphabfm, r) \isdef
    \groupb[\bigg]{
    \group[\big]{\betabfm_1, \ldots, \betabfm_r}
    \in \group[\big]{\Nbbb_{{f}}^{\Nbbb^*}}^r :
    \sum_{j=1}^r \betabfm_j = \alphabfm \text{ and } \betabfm_{j} \neq \zerobfm \text{ for all } 1 \leq j \leq r
    } .
  \end{equation*}
  Taking the norm and inserting the bounds yields
  \begin{align*}
    \norm[\big]{\pdif^{\alphabfm} Q(\ybfm)}_{\Ycal}
    &\leq \alphabfm! \sum_{r=1}^{\norms{\alphabfm}} \frac{1}{r!}
    \sum_{\betabfm \in C(\alphabfm, r)}
    (r!)^{s_M} \mu_M \nu_M^r
    \prod_{j=1}^r \frac{\group[\big]{\norms{\betabfm_j}!}^{s_P}}{\betabfm_j!} \mu_P \nu_P^{\norms{\betabfm_j}} \gammabfm^{\betabfm_j} \\
    &= \mu_M
    \nu_P^{\norms{\alphabfm}} \gammabfm^{\alphabfm}
    \sum_{r=1}^{\norms{\alphabfm}} \nu_M^r \mu_P^r (r!)^{s-1} \alphabfm! \sum_{\betabfm \in C(\alphabfm, r)}
    \prod_{j=1}^r \frac{\group[\big]{\norms{\betabfm_j}!}^{s-1} \norms{\betabfm_j}!}{\betabfm_j!} \\
    &\leq \group[\big]{\norms{\alphabfm}!}^{s-1} \mu_M
    \nu_P^{\norms{\alphabfm}} \gammabfm^{\alphabfm}
    \sum_{r=1}^{\norms{\alphabfm}} \nu_M^r \mu_P^r \alphabfm! \sum_{\betabfm \in C(\alphabfm, r)}
    \prod_{j=1}^r \frac{\norms{\betabfm_j}!}{\betabfm_j!} ,
  \end{align*}
  where we have used the combinatorial inequality from Lemma~\ref{lemma:simpcombineq}. 
  Then, using the identity
  \begin{equation*}
    \alphabfm! \sum_{C(\alphabfm, r)} \prod_{j=1}^{r} \frac{\norms{\betabfm_j}!}{\betabfm_j!}
    = \norms{\alphabfm}! \binom{\norms{\alphabfm} - 1}{r - 1}
  \end{equation*}
  from \cite[Lemma 1]{HS22} finally gives us the asserted bound
  \begin{align*}
    \norm[\big]{\pdif^{\alphabfm} Q(\ybfm)}_{\Ycal}
    &\leq \group[\big]{\norms{\alphabfm}!}^s \mu_M
    \nu_P^{\norms{\alphabfm}} \gammabfm^{\alphabfm}
    \sum_{r=1}^{\norms{\alphabfm}} (\nu_M \mu_P)^r \binom{\norms{\alphabfm} - 1}{r - 1} \\
    &= \group[\big]{\norms{\alphabfm}!}^s \mu_M \nu_M \mu_p
    (\nu_M \mu_p + 1)^{\norms{\alphabfm} - 1}
    \nu_P^{\norms{\alphabfm}} \gammabfm^{\alphabfm} . \qedhere
  \end{align*}
\end{proof}

\section{Semilinear elliptic PDE on random domains with polynomial nonlinearities}
\label{sec:PDE}
To demonstrate the application of Theorems~\ref{theorem:analyticDS}
and~\ref{theorem:Gevreyparametriccomp}
in the context of uncertainty quantification for PDEs with random data,
we consider a semilinear, elliptic model problem with polynomial
nonlinearity. For this purpose, let $\group{\Omega, \Fcal, \Pbbb}$ 
be a probability space, on which randomness for the parametric 
problem shall be modeled.
\subsection{Problem formulation}
%
Consider the following semilinear elliptic PDE 
with random data (coefficients, source term and physical domain) 
of the following, generic form: for $\omega \in \Omega$,
\begin{equation}\label{eq:PDE}
  \left\{
    \begin{alignedat}{2}
      - \Div\group[\big]{a[\omega](\xbfm) \Grad u[\omega](\xbfm)}
      + b[\omega](\xbfm) \Nfrak\group[\big]{u[\omega](\xbfm)} &= f[\omega](\xbfm)
      &\quad &\text{for } \xbfm \in G[\omega] , \\
      u[\omega](\xbfm) &= 0
      &\quad &\text{for } \xbfm \in \Gamma_{\mathrm{D}}[\omega] , \\
      \groupa[\big]{\Grad u[\omega](\xbfm), \nbfm[\omega](\xbfm)} &= g[\omega](\xbfm)
      &\quad &\text{for } \xbfm \in \Gamma_{\mathrm{N}}[\omega] .
    \end{alignedat}
  \right.
\end{equation}
The domain $G[\omega]$ is assumed to be a bounded nonempty subset 
of $\Rbbb^m$ with dimension $m \in \Nbbb^*$.

We next introduce $q \in \Rbbb_{\geq 2}$ dependent on $m$, 
with the following restrictions
\begin{equation*}
  \begin{cases}
    q < \infty, & \text{when } m \in \{1, 2\} , \\
    q \leq \frac{2m}{m-2}, & \text{else}.
  \end{cases}
\end{equation*}
Owing to the Sobolev embedding theorem, this choice guarantees
that we have the continuous (but not necessarily compact) embedding
$H^1\group[\big]{G[\omega]} \subset L^q\group[\big]{G[\omega]}$.
With this, we now require that $\Nfrak$ in \eqref{eq:PDE} is
a polynomial of degree at most $\groupf{q-1}$ with $\Nfrak(0) = 0$, i.e. 
\begin{equation}\label{eq:Npoly}
  \Nfrak(\zeta) = \sum_{j=1}^{\groupf{q-1}} \theta_j \zeta^j
\end{equation}
holds for all $\zeta \in \Rbbb$ for some coefficients $\theta_j \in \Rbbb$.
Clearly, this implies that $\Nfrak$ fulfils the polynomial growth bound
\begin{equation}\label{eq:Ngrow}
  \forall \zeta \in \Rbbb:\quad 
  \norms[\big]{\Nfrak(\zeta)} \leq c_{\Nfrak} \group[\big]{1 + \norms{\zeta}^{q-1}}
\end{equation}
for some constant $c_{\Nfrak}$.
In addition, we assume that $\Nfrak$ is
monotone, in the sense that it fulfils
\begin{equation}\label{eq:Nmonot}
\forall \zeta,\zeta' \in \Rbbb: \quad 
\big(\Nfrak(\zeta) - \Nfrak(\zeta')\big) (\zeta-\zeta') \geq 0. 
\end{equation}
As we shall show, under 
positivity assumptions on the parametric coefficient $a[\omega](\xbfm)$
and for 
non-negative coefficient $b[\omega](\xbfm)$ in \eqref{eq:PDE},
the boundary value problem \eqref{eq:PDE} gives rise to a well-posed
monotone operator equation in (a subspace of) 
the Hilbertian Sobolev space
$H^1\group[\big]{G[\omega]}$.

The reason that we first only consider polynomial nonlinearities $\Nfrak$ here
stems from the mapping properties of the Nemyckii operator associated with $\Nfrak$.
Specifically, for $1 \leq p' \leq p \leq \infty$, 
it is known that
the Nemyckii operator $N \colon L^p \to L^{p'}$, 
defined by
\begin{equation}\label{eq:Nemyckii}
  N(u) \isdef \Nfrak \circ u ,
\end{equation}
is infinitely Fr\'echet differentiable only if
$\Nfrak$ is a polynomial with a small enough degree
or $p = \infty$ and in this case its Fr\'echet derivatives
simply are given by
\begin{equation}\label{eq:DNemyckii}
  \Dif^n N(u)[u_1, \ldots, u_n] \isdef (\Nfrak^{(n)} \circ u) \cdot u_1 \cdots u_n ,
\end{equation}
see \cite[Theorems~3.12,~3.15 and~3.16]{AZ90}.
Therefore, for $m \geq 2$, as $H^1 \not\subset L^\infty$, one can 
in general only consider the polynomial nonlinearities $\Nfrak$
that we allow above, if one is interested in the analytic or \(s\)-Gevrey
smooth dependence of the solution of \eqref{eq:PDE} on
the data\footnote{Note carefully that we show such dependence 
for data-to-solution maps between function spaces in $G$ of \emph{finite smoothness}.} 
(coefficients, source term and physical domain).

If additional Sobolev regularity of weak solutions is available, 
parametric solution regularity can hold for more general nonlinearities
as we will discuss in Section~\ref{sec:SpatReg}.

\subsection{Parametric domain}
\label{sec:ParDom}
Before providing a detailed statement, 
we clarify the notion of ``random domain'' $G[\omega]$ in \eqref{eq:PDE}.
To this end,
we adopt the random domain mapping approach as introduced in \cite{XT06}
and applied e.g.\ in \cite{C2S15,HPS16,HS3} and the references there.
To formulate it,
we assume at hand a Lipschitz domain $\hat{G} \subset \Rbbb^m$,
referred to as \emph{reference domain}\footnote{The ``reference domain'' corresponds 
to the notion of ``reference configuration'' in continuum mechanics. 
It is, in general, distinct
from the ``nominal domain'' in shape-uncertainty quantification.}, 
a disjoint decomposition of its (Lipschitz-) boundary
$\hat{\Gamma} = \partial \hat{G}$ into two measurable sets,
$\hat{\Gamma} = \hat{\Gamma}_{\mathrm{D}} \cup \hat{\Gamma}_{\mathrm{N}}$,
a parametric domain mapping $\Vbfm \colon \square \to C^1(\hat{G}; \Rbbb^m)$
with $\square \isdef \groups{-\frac{1}{2}, \frac{1}{2}}^{\Nbbb^*}$
and random parameters $\Ybfm \colon \Omega \to \square$.
We assume that $\Vbfm[\ybfm]$ is a $C^1$-diffeomorphism
and fulfils the uniformity condition
\begin{equation}\label{eq:VBiLip}
  \norm[\big]{\Vbfm[\ybfm]}_{C^1(\hat{G};\Vbfm[\ybfm](\hat{G}))} \leq c_{\Vbfm}
  \quad\text{and}\quad
  \norm[\big]{(\Vbfm[\ybfm])^{-1}}_{C^1(\Vbfm[\ybfm](\hat{G});\hat{G})} \leq c_{\Vbfm}
\end{equation}
for every $\ybfm \in \square$, where $c_{\Vbfm} \geq 1$ is a fixed constant.
We also assume 
that $\hat{\Gamma}_{\mathrm{D}}$ has non-zero surface measure, 
so that a Poincar\'{e} inequality holds on the Sobolev spaces \eqref{eq:H1D} 
on $\hat{G}$.
In an abuse of notation, we will consider any function defined over $\square$
to also be defined over $\Omega$, 
by evaluating it at $\Ybfm(\omega)$ for all $\omega \in \Omega$.

With this, we set
\begin{equation*}
  G[\ybfm] \isdef V[\ybfm](\hat{G}) ,\quad
  \Gamma_{\mathrm{D}}[\ybfm] \isdef V[\ybfm](\hat{\Gamma}_{\mathrm{D}}) \quad\text{and}\quad
  \Gamma_{\mathrm{N}}[\ybfm] \isdef V[\ybfm](\hat{\Gamma}_{\mathrm{N}})
\end{equation*}
as well as
\begin{align*}
  a[\ybfm](\xbfm) &\isdef \hat{a}\group[\Big]{\group[\big]{\Vbfm[\ybfm]}^{-1}(\xbfm)} ,\quad\hphantom{wd}\quad
  b[\ybfm](\xbfm) \isdef \hat{b}\group[\Big]{\group[\big]{\Vbfm[\ybfm]}^{-1}(\xbfm)} , \\
  f[\ybfm](\xbfm) &\isdef \hat{f}\group[\Big]{\group[\big]{\Vbfm[\ybfm]}^{-1}(\xbfm)} \quad\text{and}\quad
  g[\ybfm](\xbfm) \isdef \hat{g}\group[\Big]{\group[\big]{\Vbfm[\ybfm]}^{-1}(\xbfm)} ,
\end{align*}
where $\hat{a} \in L^\infty\group{\hat{G}}$,
$\hat{b} \in L^\infty\group{\hat{G}}$,
$\hat{f} \in H^{-1}_{\mathrm{D}}\group{\hat{G}}$
and $\hat{g} \in H^{-1/2}\group{\hat{\Gamma}_{\mathrm{N}}}$.
Note that this means that we are effectively defining the boundary decomposition
as well as the functions $a$, $b$, $f$ and $g$ in Lagrangian coordinates.
Clearly, the choices directly imply that
$a[\ybfm] \in L^\infty\group[\big]{G[\ybfm]}$,
$b[\ybfm] \in L^\infty\group[\big]{G[\ybfm]}$,
$f[\ybfm] \in H^{-1}_{\mathrm{D}}\group[\big]{G[\ybfm]}$
and $g[\ybfm] \in H^{-1/2}\group[\big]{\Gamma_{\mathrm{N}}[\ybfm]}$
for every $\ybfm \in \square$.
Here, $H^{-1}_{\mathrm{D}}\group[\big]{G[\ybfm]}$ 
denotes the dual of
\begin{equation} \label{eq:H1D}
  H^1_{\mathrm{D}}\group[\big]{G[\ybfm]}
  \isdef \groupb[\big]{v \in H^1\group[\big]{G[\ybfm]} : v(\xbfm) = 0 \text{ for all } \xbfm \in \Gamma_{\mathrm{D}}[\ybfm]} .
\end{equation}
We also assume that $\hat{a}$ and $\hat{b}$ 
fulfil the ellipticity and non-negativity condition
\begin{equation}\label{eq:abell+}
  \underline{a} \isdef \essinf_{\xbfm \in \hat{G}} \hat{a}(\xbfm) > 0
  \quad\text{and}\quad
  \essinf_{\xbfm \in \hat{G}} \hat{b}(\xbfm) \geq 0 ,
\end{equation}
respectively.
For later convenience, 
we introduce the constant $c_a \isdef \min\{1, \underline{a}\}$.

Owing to the Sobolev embbeding
$H^1\group[\big]{G[\ybfm]} \subset L^q\group[\big]{G[\ybfm]}$,
the nonlinear form
\begin{equation*}
  H^1\group[\big]{G[\ybfm]} \times H^1\group[\big]{G[\ybfm]} \to \Rbbb ,
  (w, v) \mapsto \groupa[\big]{b[\ybfm] {N(w)}, v}_{G[\ybfm]}
\end{equation*}
is well-defined as $\Nfrak$ is a polynomial of at most degree $\groupf{q-1}$
and thus its Nemyckii operator $N$ from \eqref{eq:Nemyckii}
is well-defined as $\Nfrak$ fulfils the polynomial growth bound \eqref{eq:Ngrow}.
Then, 
it is straightforward to see that the variational formulation
of \eqref{eq:PDE} for every $\ybfm \in \square$ reads:
find $u[\ybfm] \in H^1_{\mathrm{D}}\group[\big]{G[\ybfm]}$ so that
for all $v \in H^1_{\mathrm{D}}\group[\big]{G[\ybfm]}$, we have
\begin{equation}\label{eq:wPDE}
  \groupa[\big]{a[\ybfm] \Grad u[\ybfm], \Grad v}_{G[\ybfm]}
  + \groupa[\big]{b[\ybfm] N\group[\big]{u[\ybfm]}, v}_{G[\ybfm]}
  = \groupa[\big]{f[\ybfm], v}_{G[\ybfm]} 
  + \groupa[\big]{g[\ybfm], v}_{\Gamma_{\mathrm{N}}[\ybfm]} .
\end{equation}
%
\subsection{Domain mapping approach}
By utilising that $\Vbfm[\ybfm]$ is a $C^1$-diffeomorphism, 
we can pull back the spatially weak 
formulation \eqref{eq:wPDE} by considering
\begin{equation*}
  \hat{u}[\ybfm](\xbfm) \isdef u[\ybfm]\group[\big]{\Vbfm[\ybfm](\xbfm)} .
\end{equation*}
Then,
we have that 
$\hat{u}[\ybfm] \in H^1_{\mathrm{D}}\group{\hat{G}}$
for every $\ybfm \in \square$ 
and for all $v \in H^1_{\mathrm{D}}\group[\big]{G[\ybfm]}$ fulfils
\begin{multline*}
  \groupa[\big]{\tilde{\Abfm}[\ybfm] \Grad \hat{u}[\ybfm], \Grad (v \circ \Vbfm[\ybfm])}_{\hat{G}}
  + \groupa[\big]{\tilde{b}[\ybfm] N(\hat{u}[\ybfm]) , v \circ \Vbfm[\ybfm]}_{\hat{G}} \\
  = \groupa[\big]{\tilde{f}[\ybfm], v \circ \Vbfm[\ybfm]}_{\hat{G}}
  + \groupa[\big]{\tilde{g}[\ybfm], v \circ \Vbfm[\ybfm]}_{\hat{\Gamma}_{\mathrm{N}}}
\end{multline*}
with
\begin{align*}
  \tilde{\Abfm}[\ybfm](\xbfm) &= \det\group[\big]{\Jbfm[\ybfm](\xbfm)} \group[\big]{\Jbfm[\ybfm](\xbfm)}^{-1} \hat{a}(\xbfm) \group[\big]{\Jbfm[\ybfm](\xbfm)}^{-\trans} , \\
  \tilde{b}[\ybfm](\xbfm) &= \det\group[\big]{\Jbfm[\ybfm](\xbfm)} \hat{b}(\xbfm) , \\
  \tilde{f}[\ybfm](\xbfm) &= \det\group[\big]{\Jbfm[\ybfm](\xbfm)} \hat{f}(\xbfm) \\
  \text{and}\quad
  \tilde{g}[\ybfm](\xbfm) &= \norm[\Big]{\group[\big]{\Jbfm[\ybfm](\xbfm)}^{-\trans} \nbfm(\xbfm)}_2 \det\group[\big]{\Jbfm[\ybfm](\xbfm)} \hat{g}(\xbfm) ,
\end{align*}
where $\Jbfm[\ybfm](\xbfm) = \Dif_{\xbfm} \group[\big]{\Vbfm[\ybfm]}(\xbfm)$.
Note that we have made use of the fact here that 
$\Vbfm[\ybfm]:\hat{G} \to G[\ybfm]$ 
is a $C^1$-diffeomorphism for every $\ybfm \in \square$.
This implies that 
$\det\group[\big]{\Jbfm[\ybfm](\xbfm)}$ has the same sign for all $\xbfm$;
without loss of generality, we assume that it is positive, 
i.e.\ orientation preserving.

As the map $H^1_{\mathrm{D}}\group[\big]{G[\ybfm]} \to H^1_{\mathrm{D}}
\group{\hat{G}},\ v \mapsto v \circ \Vbfm[\ybfm]$
is an isomorphism for every $\ybfm \in \square$,
we can replace the $v \circ \Vbfm[\ybfm]$ terms with
$v \in H^1_{\mathrm{D}}\group[\big]{G[\ybfm]}$
for some $\hat{v}$ with $\hat{v} \in H^1_{\mathrm{D}}\group{\hat{G}}$.
We thus arrive at the spatially weak formulation for the pullback:
$\hat{u}[\ybfm] \in H^1_{\mathrm{D}}\group{\hat{G}}$ 
for every $\ybfm \in \square$ fulfils
\begin{equation}\label{eq:wPDEpullback}
  \groupa[\big]{\tilde{\Abfm}[\ybfm] \Grad \hat{u}[\ybfm], \Grad \hat{v}}_{\hat{G}}
  + \groupa[\big]{\tilde{b}[\ybfm] N(\hat{u}[\ybfm]) , \hat{v}}_{\hat{G}} \\
  = \groupa[\big]{\tilde{f}[\ybfm], \hat{v}}_{\hat{G}}
  + \groupa[\big]{\tilde{g}[\ybfm], \hat{v}}_{\hat{\Gamma}_{\mathrm{N}}}
\end{equation}
for all $\hat{v} \in H^1_{\mathrm{D}}\group{\hat{G}}$.
It is straightforward to see that we have
$\tilde{\Abfm}[\ybfm] \in L^\infty\group{\hat{G}; \Rbbb^{d \times d}_{\textrm{sym}}}$,
$\tilde{b}[\ybfm] \in L^\infty\group{\hat{G}}$,
$\tilde{f}[\ybfm] \in H^{-1}_{\mathrm{D}}\group{\hat{G}}$
and $\tilde{g}[\ybfm] \in H^{-1/2}\group{\hat{\Gamma}_{\mathrm{N}}}$
for every $\ybfm \in \square$.
Moreover, for every $\ybfm \in \square$, $\tilde{\Abfm}[\ybfm]$ and
$\tilde{b}[\ybfm]$ retain their ellipticity and non-negativity condition
\begin{equation*}
  \essinf_{\xbfm \in \hat{G}} \min_{\vbfm \in \Rbbb^m \setminus 
  \{\zerobfm\}}\frac{\vbfm^\trans \tilde{\Abfm}[\ybfm](\xbfm) \vbfm}
  {\vbfm^\trans \vbfm} \geq c_a c_{\Vbfm}^{-m-2} > 0
  \quad\text{and}\quad
  \essinf_{\xbfm \in \hat{G}} \tilde{b}[\ybfm](\xbfm) \geq 0 ,
\end{equation*}
respectively.

\subsection{Residual operator equation}
\label{sec:ResOpEq}
To cast equation~\eqref{eq:wPDEpullback} into the abstract setting of
Section~\ref{sct:abstract} on a fixed domain $\hat{G}$, 
we choose
\begin{align*}
  \Dcal &\isdef L^\infty\group{\hat{G}; \Rbbb^{m \times m}_{\textrm{sym}}}
  \times L^\infty\group{\hat{G}}
  \times H^{-1}_{\mathrm{D}}\group{\hat{G}}
  \times H^{-1/2}\group{\hat{\Gamma}_{\mathrm{N}}}, \\
  \Ucal &\isdef H^1_{\mathrm{D}}\group{\hat{G}}, \\
  \Rcal &\isdef H^{-1}_{\mathrm{D}}\group{\hat{G}} = \Ucal'
\end{align*}
and the subset of admissible data
\begin{equation*}
  D \isdef D_{\Abfm}
  \times D_b
  \times H^{-1}_{\mathrm{D}}\group{\hat{G}}
  \times H^{-1/2}\group{\hat{\Gamma}_{\mathrm{N}}},
\end{equation*}
where
\begin{equation*}
  D_{\Abfm} \isdef \groupb[\bigg]{\Abfm \in L^\infty\group{\hat{G}; \Rbbb^{m \times m}_{\textrm{sym}}} : \essinf_{\xbfm \in \hat{G}} \min_{\vbfm \in \Rbbb^m \setminus \{\zerobfm\}}\frac{\vbfm^\trans \Abfm(\xbfm) \vbfm}{\vbfm^\trans \vbfm} \geq c_a c_{\Vbfm}^{-m-2} \defis c_\Abfm}
\end{equation*}
and $D_b \isdef \groupb[\big]{b \in L^\infty\group{\hat{G}} 
: \essinf_{\xbfm \in \hat{G}} b(\xbfm) \geq 0}$. For the sake 
of legibility, we associate the data $d \in \Dcal$ 
to be given by the tuple $(\Abfm, b, f, g)$. 
We also extend this to modifications of $d$, 
for example\ $d_1 = (\Abfm_1, b_1, f_1, g_1)$.

Now, we define the residual operator $R \colon D \times \Ucal \to \Rcal$
by setting
\begin{equation}\label{eq:residual}
  \group[\big]{R(d, u)}(v)
  \isdef \groupa[\big]{\Abfm \Grad u, \Grad v}_{\hat{G}}
  + \groupa[\big]{b N(u), v}_{\hat{G}}
  - \groupa[\big]{f, v}_{\hat{G}}
  - \groupa[\big]{g, v}_{\hat{\Gamma}_{\mathrm{N}}}
\end{equation}
for all $v \in \Ucal$.
Note that this is justified since $\Ucal' = \Rcal$ and
the right-hand side of equation \eqref{eq:residual} is linear in $v$.
With this residual operator $R$,
equation~\eqref{eq:wPDEpullback} can be restated as the residual equation
\begin{equation*}
  R\group[\big]{\tilde{d}[\ybfm], \hat{u}[\ybfm]} = 0 ,
\end{equation*}
where $\tilde{d} \colon \square \to D$ is the \emph{parameters-to-data mapping}
given by
\begin{equation}\label{eq:Par2Dat}
  \tilde{d}[\ybfm] \isdef \group[\big]{\tilde{\Abfm}[\ybfm], \tilde{b}[\ybfm], \tilde{f}[\ybfm], \tilde{g}[\ybfm]} .
\end{equation}
Before we can discuss the regularity of mapping that sends the data to a solution
in Section~\ref{sec:RgDtoSmap} and
the parametric regularity of solutions with the parametric data $\tilde{d}[\ybfm]$
in Section~\ref{sec:RegParMap}, we here
consider the solvability of the residual equation
\begin{equation*}
\forall d\in D: u\in \Ucal\ \text{such that}\  R(d, u) = 0\ \text{in}\ \Rcal.
\end{equation*}
First, 
we establish strong monotonicity 
of the nonlinear operator $w\mapsto R(d,w)$.
\begin{lemma}
  Let $\Nfrak$ satisfy the polynomial growth bound \eqref{eq:Ngrow}
    and the monotonicity \eqref{eq:Nmonot}.
  Then, for every $d \in D$, the operator $\Ucal \to \Rcal,\ w \mapsto R(d, w)$
  is strongly monotone in the sense of \cite[Definition~11.1]{Deimling} 
  with constant $c_{\textrm{PF}}^{-2} c_\Abfm > 0$,
  i.e. 
  \begin{equation*}
    \group[\big]{R(d, w_1) - R(d, w_2)}(w_1 - w_2)
    \geq c_{\textrm{PF}}^{-2} c_\Abfm \norm{w_1 - w_2}_{\Ucal}^2
  \end{equation*}
  holds for all $w_1, w_2 \in \Ucal$,
  where $c_{\textrm{PF}} > 1$ is the Poincar\'e-Friedrichs constant satisfying
  \begin{equation*}
    \norm{v}_{\Ucal}^2 \leq c_{\textrm{PF}}^2 \groupa[\big]{\Grad v, \Grad v}_{\hat{G}}
  \end{equation*}
  for all $v \in \Ucal$.
\end{lemma}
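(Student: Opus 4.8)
The plan is to exploit the additive structure of the residual operator in \eqref{eq:residual}. For arbitrary $w_1, w_2, v \in \Ucal$ one has
\begin{equation*}
  \group[\big]{R(d, w_1) - R(d, w_2)}(v)
  = \groupa[\big]{\Abfm \Grad(w_1 - w_2), \Grad v}_{\hat{G}}
  + \groupa[\big]{b\group[\big]{N(w_1) - N(w_2)}, v}_{\hat{G}},
\end{equation*}
because the affine data terms $\groupa{f,v}_{\hat{G}}$ and $\groupa{g,v}_{\hat{\Gamma}_{\mathrm{N}}}$ do not depend on $w_1$ or $w_2$ and cancel in the difference. Taking $v = w_1 - w_2$ thus reduces the claim to bounding from below the sum of a diffusion contribution and a reaction contribution, which I would treat separately.

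For the diffusion contribution I would use that $\Abfm \in D_{\Abfm}$ satisfies the pointwise ellipticity estimate $\vbfm^\trans \Abfm(\xbfm) \vbfm \geq c_\Abfm \, \vbfm^\trans \vbfm$ for a.e.\ $\xbfm \in \hat{G}$ and all $\vbfm$, whence
\begin{equation*}
  \groupa[\big]{\Abfm \Grad(w_1 - w_2), \Grad(w_1 - w_2)}_{\hat{G}}
  \geq c_\Abfm \groupa[\big]{\Grad(w_1 - w_2), \Grad(w_1 - w_2)}_{\hat{G}}
  \geq c_{\textrm{PF}}^{-2} c_\Abfm \norm{w_1 - w_2}_{\Ucal}^2,
\end{equation*}
the last step being exactly the Poincar\'e--Friedrichs inequality recorded in the statement, which applies because $\hat{\Gamma}_{\mathrm{D}}$ has positive surface measure.

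For the reaction contribution the key point is that it is nonnegative. I would first note that, by the polynomial growth bound \eqref{eq:Ngrow} together with the Sobolev embedding $H^1(\hat{G}) \subset L^q(\hat{G})$, one has $N(w_1), N(w_2) \in L^{q/(q-1)}(\hat{G})$ and hence $b\group{N(w_1) - N(w_2)} \in L^{q/(q-1)}(\hat{G})$, so that the duality pairing against $w_1 - w_2 \in L^q(\hat{G})$ is genuinely the Lebesgue integral
\begin{equation*}
  \groupa[\big]{b\group[\big]{N(w_1) - N(w_2)}, w_1 - w_2}_{\hat{G}}
  = \int_{\hat{G}} b(\xbfm)\group[\big]{\Nfrak(w_1(\xbfm)) - \Nfrak(w_2(\xbfm))}\group[\big]{w_1(\xbfm) - w_2(\xbfm)} \,\mathrm{d}\xbfm.
\end{equation*}
Since $b \geq 0$ a.e.\ by definition of $D_b$ and the remaining factor of the integrand is nonnegative a.e.\ by the monotonicity \eqref{eq:Nmonot} of $\Nfrak$, this term is $\geq 0$; adding the two contributions gives the asserted estimate with constant $c_{\textrm{PF}}^{-2} c_\Abfm$, which is positive since $c_\Abfm > 0$. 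The argument is almost entirely routine, and the only step that genuinely requires care is the identification of the nonlinear pairing with an integral --- i.e.\ the integrability of $b(\Nfrak \circ w_1 - \Nfrak \circ w_2)(w_1 - w_2)$ --- which is precisely where the growth restriction \eqref{eq:Ngrow} on $\Nfrak$ and the admissible range of $q$ (guaranteeing $H^1 \subset L^q$) are used; once this is established, the pointwise monotonicity of $\Nfrak$ transfers to the pairing at once.
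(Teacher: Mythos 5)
Your argument is correct and is essentially the paper's own proof: split the difference into the diffusion and reaction parts (the $f$- and $g$-terms cancel), bound the first from below via ellipticity and the Poincar\'e--Friedrichs inequality, and discard the second as nonnegative using $b\geq 0$ and the monotonicity \eqref{eq:Nmonot}. The only cosmetic difference is that the paper uses the growth bound \eqref{eq:Ngrow} to record continuity of $w\mapsto R(d,w)$ (part of Deimling's notion of strong monotonicity), whereas you use it to justify that the nonlinear pairing is a genuine Lebesgue integral --- both are fine and amount to the same well-definedness check.
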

\begin{proof}
  We note that, by the the stated values for $q$,
  we have the continuous Sobolev embedding $H^1(\hat{G}) \subset L^q(\hat{G})$.
  Therefore, the polynomial growth bound \eqref{eq:Ngrow}
  implies that the residual map $\Ucal \to \Rcal,\ w \mapsto R(d, w)$
  is continuous. Hence, we are left to verify monotonicity.
  Obviously, we have
  \begin{multline*}
    \group[\big]{R(d, w_1) - R(d, w_2)}(w_1 - w_2)
    \\
    = \groupa[\big]{\Abfm \Grad u_1 - \Abfm \Grad u_2, \Grad u_1 - \Grad u_2}_{\hat{G}}
    + \groupa[\big]{b N(u_1) - b N(u_2), u_1 - u_2}_{\hat{G}} .
  \end{multline*}
  The asserted strong monotonicity thus follows, as by the assumed ellipticity \eqref{eq:abell+}
  \begin{equation*}
    \groupa[\big]{\Abfm \Grad u_1 - \Abfm \Grad u_2, \Grad u_1 - \Grad u_2}_{\hat{G}}
    \geq c_\Abfm \groupa[\big]{\Grad (u_1 - u_2), \Grad (u_1 - u_2)}_{\hat{G}}
  \end{equation*}
  holds and
  \begin{equation*}
    \label{eq:EllPos}
    \groupa[\big]{b N(u_1) - b N(u_2), u_1 - u_2}_{\hat{G}} \geq 0
  \end{equation*}
  holds by \eqref{eq:Nmonot} and the (assumed) non-negativity of 
  the coefficient $b(\hat{x})$ in $\hat{G}$.
\end{proof}
This now directly yields the following result.
\begin{lemma}
  Let $\Nfrak$ satisfy the polynomial growth bound \eqref{eq:Ngrow}
  and the monotonicity \eqref{eq:Nmonot}.
  Then, for every $d \in D$, there exists a unique $u \in \Ucal$ which fulfils
  the residual equation $R(d, u) = 0$.
  In addition, we have the injectivity bound
  \begin{equation}\label{eq:Sbound}
    \norm[\big]{u}_{\Ucal}
    \leq 2 c_{\textrm{PF}}^2 c_\Abfm^{-1} \norm{d}_{\Dcal} .
  \end{equation}
\end{lemma}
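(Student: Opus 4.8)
The plan is to deduce existence and uniqueness from the classical surjectivity theory for monotone operators, and then to extract the a priori bound \eqref{eq:Sbound} by testing the strong monotonicity estimate of the preceding lemma against the pair $(u, 0)$.

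First I would observe that, for fixed $d \in D$, the operator $\Ucal \to \Rcal$, $w \mapsto R(d, w)$, is bounded (maps bounded sets to bounded sets) and continuous: the form $w \mapsto \groupa[\big]{\Abfm \Grad w, \Grad \cdot}_{\hat{G}}$ is bounded and linear since $\Abfm \in L^\infty(\hat{G})$, while $w \mapsto \groupa[\big]{b N(w), \cdot}_{\hat{G}}$ is bounded and continuous since $b \in L^\infty(\hat{G})$ and, by the Sobolev embedding $H^1(\hat{G}) \subset L^q(\hat{G})$ together with \cite[Theorems~3.12 and~3.15]{AZ90}, the Nemyckii operator $N$ maps $H^1(\hat{G})$ boundedly and continuously into $L^{q'}(\hat{G}) \subset H^{-1}_{\mathrm{D}}(\hat{G})$ --- a fact already invoked in the proof of the previous lemma. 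In particular $w \mapsto R(d, w)$ is demicontinuous on the separable, reflexive Hilbert space $\Ucal$, and the previous lemma shows that it is strongly monotone with constant $c_{\textrm{PF}}^{-2} c_\Abfm > 0$. Strong monotonicity immediately gives coercivity, since
\begin{equation*}
  \group[\big]{R(d, w)}(w)
  \geq c_{\textrm{PF}}^{-2} c_\Abfm \norm{w}_{\Ucal}^2 - \norm[\big]{R(d, 0)}_{\Rcal} \norm{w}_{\Ucal}
  \quad \text{for all } w \in \Ucal .
\end{equation*}
Hence the Browder--Minty surjectivity theorem for monotone operators (see e.g.\ \cite{Deimling}) applies and shows that $w \mapsto R(d, w)$ is a bijection of $\Ucal$ onto $\Rcal$; uniqueness of course also follows directly from strong monotonicity. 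This yields the existence of a unique $u \in \Ucal$ with $R(d, u) = 0$.

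For the injectivity bound I would use that $\Nfrak(0) = 0$, hence $N(0) = \Nfrak \circ 0 = 0$, so that
\begin{equation*}
  \group[\big]{R(d, 0)}(v) = - \groupa[\big]{f, v}_{\hat{G}} - \groupa[\big]{g, v}_{\hat{\Gamma}_{\mathrm{N}}}
\end{equation*}
for all $v \in \Ucal$. Inserting $w_1 = u$ and $w_2 = 0$ into the strong monotonicity estimate and using $R(d, u) = 0$ then gives
\begin{equation*}
  c_{\textrm{PF}}^{-2} c_\Abfm \norm{u}_{\Ucal}^2
  \leq \group[\big]{R(d, u) - R(d, 0)}(u)
  = \groupa[\big]{f, u}_{\hat{G}} + \groupa[\big]{g, u}_{\hat{\Gamma}_{\mathrm{N}}}
  \leq 2 \norm{d}_{\Dcal} \norm{u}_{\Ucal} ,
\end{equation*}
where in the last inequality I view $v \mapsto \groupa[\big]{f, v}_{\hat{G}}$ and $v \mapsto \groupa[\big]{g, v}_{\hat{\Gamma}_{\mathrm{N}}}$ as elements of $\Rcal = \Ucal'$ whose dual norms are bounded by $\norm{f}_{H^{-1}_{\mathrm{D}}(\hat{G})} \leq \norm{d}_{\Dcal}$ and $\norm{g}_{H^{-1/2}(\hat{\Gamma}_{\mathrm{N}})} \leq \norm{d}_{\Dcal}$ respectively (the latter via the trace theorem, matching the normalisation of the $H^{-1/2}$-norm). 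Dividing by $\norm{u}_{\Ucal}$ gives \eqref{eq:Sbound}.

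I expect the main obstacle to lie not in the monotone-operator machinery itself --- which is essentially a black box once continuity and strong monotonicity are in place --- but in carefully checking that its hypotheses genuinely hold on all of $\Ucal$, in particular the demicontinuity (indeed boundedness and continuity) of the nonlinear part; this rests on the mapping properties of the Nemyckii operator $N \colon H^1(\hat{G}) \to L^{q'}(\hat{G})$ and hence on the polynomial growth restriction on $\Nfrak$ and the admissible range of $q$, but it has already been established in the previous lemma. For the bound, the only slightly delicate point is the representation of the Neumann term $\groupa[\big]{g, v}_{\hat{\Gamma}_{\mathrm{N}}}$ as a functional on $\Ucal$ controlled by $\norm{d}_{\Dcal}$, and the constant $2$ in \eqref{eq:Sbound} is exactly the combined contribution of the two data terms $f$ and $g$.
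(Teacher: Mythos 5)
Your proposal is correct and follows essentially the same route as the paper: existence and uniqueness via the monotone operator existence theorem (the paper cites \cite[Theorem~11.2]{Deimling}, i.e.\ exactly the Browder--Minty machinery you invoke) applied to the strongly monotone map $w \mapsto R(d,w)$, and the bound \eqref{eq:Sbound} by testing strong monotonicity against $(u,0)$ and estimating $R(d,0)$ by the $f$- and $g$-terms, each controlled by $\norm{d}_{\Dcal}$, which is where the factor $2$ comes from. No substantive differences.
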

\begin{proof}
  As $\Ucal$ is a real, separable Hilbert space
  and $\Rcal$ is its dual
  and, for every $d \in D$, the operator $\Ucal \to \Rcal,\ u \mapsto R(d, u)$
  is strongly monotone, the existence theorem on monotone operator equations,
  \cite[Theorem~11.2]{Deimling},
  implies the existence and uniqueness of a $u \in \Ucal$ which fulfils
  the residual equation $R(d, u) = 0$.
  For the bound, we calculate by using the strong monotonicity
  \begin{equation*}
    c_{\textrm{PF}}^{-2} c_\Abfm \norm{u}_{\Ucal}^2
    \leq \group[\big]{R(d, u) - R(d, 0)} (u - 0)
    \leq \norm[\big]{R(d, 0)}_{\Rcal} \norm{u}_{\Ucal} .
  \end{equation*}
  Then, as
  \begin{multline*}
    \norms[\Big]{\big(R(d, 0)\big)(v)}
    = \norms[\Big]{-\groupa[\big]{f, v}_{\hat{G}} - \groupa[\big]{g, v}_{\hat{\Gamma}_{\mathrm{N}}}} \\
    \leq 2 \max\groupb[\big]{\norm{f}_{H^{-1}_{\mathrm{D}}(\hat{G})}, \norm{g}_{H^{-1/2}(\hat{\Gamma}_{\mathrm{N}})}} \norm{v}_{\Ucal}
    \leq 2 \norm{d}_{\Dcal} \norm{v}_{\Ucal}
  \end{multline*}
  holds for every $v\in \Ucal$, the asserted bound follows.
\end{proof}

Therefore,
there exists a unique \emph{data-to-solution mapping}
$S \colon D \to \Ucal$ such that the equation
\begin{equation*}
  R\group[\big]{d, S(d)} = 0 \ \text{in}\ \Rcal
\end{equation*}
is fulfilled for all $d \in D$.
Moreover,
we know that $S$ maps bounded nonempty subsets $B \subset D$
to bounded nonempty subsets $S(B) \subset \Ucal$
and it is straightforward to show that it is indeed not
only continuous but even locally Lipschitz continuous.
Now, the solutions of \eqref{eq:wPDEpullback} can be stated as
$\hat{u}[\ybfm] = S\group[\big]{\tilde{d}[\ybfm]}$,
where we call 
$\hat{u} \colon \square \to H^1_{\mathrm{D}}\group{\hat{G}}$
the \emph{parameters-to-solution mapping}.

Using the first Fr\'echet derivative of $N$ from \eqref{eq:DNemyckii},
the first Fr\'echet derivative of $R$ immediately implies
in addition the following result concerning the linear maps
$\Dif_2 R (d, u) \in \Bcal(\Ucal; \Rcal)$.
\begin{proposition}\label{proposition:S}
  Let $\Nfrak$ be a polynomial nonlinearity satisfying \eqref{eq:Npoly}
  and the monotonicity \eqref{eq:Nmonot}.
  Then, for all $d \in D$, $u \in \Ucal$ and $v \in \Ucal$, 
  we have
  \begin{equation*}
    \group[\big]{\Dif_2 R(d, u)\groups{u_1}}(v)
    = \groupa[\big]{\Abfm \Grad u_1, \Grad v}_{\hat{G}}
    + \groupa[\big]{b \Dif N(u)[u_1], v}_{\hat{G}} .
  \end{equation*}
  Hence, $\Dif_2 R (d, u) \in \Bcal(\Ucal; \Rcal)$ is
  strongly monotone with constant $c_{\textrm{PF}}^{-2} c_\Abfm$
  and, therefore, 
  a Banach space isomorphism with
  \begin{equation*}
    \norm[\Big]{\group[\big]{\Dif_2 R\group{d, u}}^{-1}}_{\Bcal\group{\Rcal; \Ucal}}
  \leq c_{\textrm{PF}}^2 c_\Abfm^{-1} .
  \end{equation*}
\end{proposition}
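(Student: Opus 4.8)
The plan is to establish the three assertions in turn: the formula for $\Dif_2 R(d,u)$, its strong monotonicity, and the resulting isomorphism property together with the bound on the inverse. The last two follow almost verbatim from the arguments already used for the nonlinear operator $w\mapsto R(d,w)$ and for the injectivity bound \eqref{eq:Sbound}, so the proof is a short adaptation of what precedes it.

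First I would differentiate the explicit expression \eqref{eq:residual} in its second argument. The contributions $\groupa{f,v}_{\hat{G}}$ and $\groupa{g,v}_{\hat{\Gamma}_{\mathrm{N}}}$ are independent of $u$ and hence vanish; the term $\groupa{\Abfm\Grad u,\Grad v}_{\hat{G}}$ is linear in $u$ and therefore differentiates to $\groupa{\Abfm\Grad u_1,\Grad v}_{\hat{G}}$; and, by the chain rule, the Nemyckii term $\groupa{b N(u),v}_{\hat{G}}$ differentiates to $\groupa{b\,\Dif N(u)[u_1],v}_{\hat{G}}$, where one uses that, $\Nfrak$ being a polynomial of degree at most $\groupf{q-1}$ and $H^1(\hat{G})$ embedding continuously into $L^q(\hat{G})$, the Nemyckii operator $N$ is Fr\'echet differentiable with derivative given by \eqref{eq:DNemyckii}, cf.\ \cite[Theorems~3.12,~3.15 and~3.16]{AZ90}. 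Here one must also verify, by H\"older's inequality and the same Sobolev embedding, that $u_1\mapsto \groupa{b\,\Dif N(u)[u_1],\cdot}_{\hat{G}}$ defines a bounded linear map $\Ucal\to\Rcal$, so that indeed $\Dif_2 R(d,u)\in\Bcal(\Ucal;\Rcal)$; I expect this exponent bookkeeping — which is exactly where the restriction relating $q$ to $m$ enters — to be the one mildly delicate point, everything else being routine.

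Next I would test the derivative against $u_1$ itself and estimate from below precisely as in the proof of strong monotonicity of $w\mapsto R(d,w)$. For the principal part, the ellipticity of $\Abfm$ on $D_{\Abfm}$ and the Poincar\'e--Friedrichs inequality give $\groupa{\Abfm\Grad u_1,\Grad u_1}_{\hat{G}}\geq c_\Abfm\groupa{\Grad u_1,\Grad u_1}_{\hat{G}}\geq c_{\textrm{PF}}^{-2}c_\Abfm\norm{u_1}_{\Ucal}^2$. For the nonlinear part, the monotonicity \eqref{eq:Nmonot} of $\Nfrak$ means that $\Nfrak$ is non-decreasing, hence $\Nfrak'(\zeta)\geq 0$ for all $\zeta\in\Rbbb$; together with $b\geq 0$ almost everywhere on $\hat{G}$ (from $d\in D$) this yields $\groupa{b\,\Dif N(u)[u_1],u_1}_{\hat{G}}=\int_{\hat{G}} b(\xbfm)\,\Nfrak'(u(\xbfm))\,u_1(\xbfm)^2\,\mathrm{d}\xbfm\geq 0$. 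Adding the two contributions gives $\group{\Dif_2 R(d,u)[u_1]}(u_1)\geq c_{\textrm{PF}}^{-2}c_\Abfm\norm{u_1}_{\Ucal}^2$, i.e.\ strong monotonicity with the asserted constant.

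Finally, since $\Ucal$ is a separable real Hilbert space with dual $\Rcal$, a bounded, linear, strongly monotone map $\Ucal\to\Rcal$ is a Banach space isomorphism; this is the linear special case of the monotone operator existence theorem \cite[Theorem~11.2]{Deimling} already invoked above, and is equivalently a consequence of the Lax--Milgram lemma applied to the associated bounded, coercive bilinear form. For the norm of the inverse I would argue exactly as for \eqref{eq:Sbound}: from $c_{\textrm{PF}}^{-2}c_\Abfm\norm{u_1}_{\Ucal}^2\leq\group{\Dif_2 R(d,u)[u_1]}(u_1)\leq\norm{\Dif_2 R(d,u)[u_1]}_{\Rcal}\norm{u_1}_{\Ucal}$ one obtains $\norm{u_1}_{\Ucal}\leq c_{\textrm{PF}}^{2}c_\Abfm^{-1}\norm{\Dif_2 R(d,u)[u_1]}_{\Rcal}$, and substituting $u_1=\group{\Dif_2 R(d,u)}^{-1}r$ for arbitrary $r\in\Rcal$ yields the claimed estimate $\norm{\group{\Dif_2 R(d,u)}^{-1}}_{\Bcal(\Rcal;\Ucal)}\leq c_{\textrm{PF}}^{2}c_\Abfm^{-1}$.
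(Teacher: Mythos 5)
Your proposal is correct and follows essentially the same route the paper takes (the paper states the proposition as an immediate consequence of the derivative formula \eqref{eq:DNemyckii}, the strong-monotonicity computation of the preceding lemma, and the Lax--Milgram/monotone-operator argument with the same inverse bound as in \eqref{eq:Sbound}). Your additional care about the boundedness of $u_1 \mapsto \groupa[\big]{b\,\Dif N(u)[u_1], \cdot}_{\hat{G}}$ via H\"older and the Sobolev embedding is a worthwhile detail the paper leaves implicit.
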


\subsection{Regularity of the data-to-solution mapping}
\label{sec:RgDtoSmap}
We shall focus on the regularity of the data-to-solution mapping $S$.
To this end, 
we first consider the structure of the residual operator $R$.
A first observation is that the polynomial nonlinearity $\Nfrak$
implies a polynomial structure of the residual operator $R$.

\begin{lemma}\label{lemma:DR}
  Let $\Nfrak$ be a polynomial nonlinearity satisfying \eqref{eq:Npoly}.
  Then, the residual operator $R \colon D \times \Ucal \to \Rcal$ 
  is a continuous polyomial map between the Banach spaces
  $\Dcal \times \Ucal$ and $\Rcal$.
  In particular, $R$ is real analytic everywhere and
  its first Fr\'echet derivative is characterised by
  \begin{multline*}
    \group[\Big]{\Dif R(d, u)\groups[\big]{(d_1, u_1)}} (v)
    = \groupa[\big]{\Abfm_1 \Grad u + \Abfm \Grad u_1, \Grad v}_{\hat{G}} \\
    + \groupa[\big]{b_1 N(u) + b \Dif N(u)[u_1], v}_{\hat{G}}
    - \groupa[\big]{f_1, v}_{\hat{G}}
    - \groupa[\big]{g_1, v}_{\hat{\Gamma}_{\mathrm{N}}} ,
  \end{multline*}
  its second Fr\'echet derivative by
  \begin{multline*}
    \group[\Big]{\Dif^2 R(d, u)\groups[\big]{(d_1, u_1), (d_2, u_2)}}(v)
    = \groupa[\big]{\Abfm_1 \Grad u_2 + \Abfm_2 \Grad u_1, \Grad v}_{\hat{G}} \\
    + \groupa[\big]{b \Dif^2 N(u)[u_1, u_2] + b_1 \Dif N(u)[u_2] + b_2 \Dif N(u)[u_1], v}_{\hat{G}}
  \end{multline*}
  and, for $n \in \Nbbb_{\geq 3}$, its $n$th Fr\'echet derivative by
  \begin{multline*}
    \group[\Big]{\Dif^n R(d, u)\groups[\big]{(d_1, u_1), (d_2, u_2), \ldots, (d_n, u_n)}} (v) \\
    = \groupa[\bigg]{b \Dif^n N(u)[u_1, u_2, \ldots, u_n] + \sum_{\sigma \in \Pi_n} b_{\sigma(1)} N^{(n-1)}(u)[u_{\sigma(2)}, \ldots, u_{\sigma(n)}], v}_{\hat{G}} .
  \end{multline*}
  Note that we thus especially have that
  $\Dif^n R(d, u) = 0$ holds for $n \geq \groupf{q-1}+2$.
\end{lemma}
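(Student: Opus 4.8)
The plan is to exploit the explicit polynomial structure of $R$ already visible in its definition \eqref{eq:residual}. The key observation is that $R(d,u)$ is a \emph{sum of terms, each of which is multilinear (of degree at most $\groupf{q-1}+1$) in the pair $(d,u)$}: the diffusion term $\groupa{\Abfm\Grad u,\Grad v}_{\hat G}$ is bilinear in $(\Abfm,u)$, the source terms $\groupa{f,v}_{\hat G}$ and $\groupa{g,v}_{\hat\Gamma_{\mathrm N}}$ are linear in $d$, and the reaction term $\groupa{bN(u),v}_{\hat G} = \sum_{j=1}^{\groupf{q-1}}\theta_j\groupa{bu^j,v}_{\hat G}$ is, for each $j$, multilinear of degree $j+1$ in $(b,u,\ldots,u)$. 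A continuous multilinear map between Banach spaces is real analytic with vanishing derivatives above its degree, and a finite sum of such maps is a continuous polynomial map; this immediately gives the first two assertions (polynomiality, real analyticity everywhere, and $\Dif^n R = 0$ for $n\geq \groupf{q-1}+2$, since the highest-degree monomial has degree $\groupf{q-1}+1$). The well-definedness and continuity of each term rests on the Sobolev embedding $H^1(\hat G)\subset L^q(\hat G)$ and $\hat a,\hat b\in L^\infty(\hat G)$, already recorded in the preceding discussion.

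Next I would compute the Fr\'echet derivatives termwise. For the bilinear diffusion term, differentiation in the direction $(d_1,u_1)=(\Abfm_1,b_1,f_1,g_1,u_1)$ produces $\groupa{\Abfm_1\Grad u + \Abfm\Grad u_1,\Grad v}_{\hat G}$ by the product rule for bilinear maps; the linear source terms contribute $-\groupa{f_1,v}_{\hat G}-\groupa{g_1,v}_{\hat\Gamma_{\mathrm N}}$ once and then vanish. For the reaction term, I would write $\groupa{bN(u),v}_{\hat G}$ and apply the product rule in $(b,u)$: the derivative of the Nemyckii operator $N$ is given by \eqref{eq:DNemyckii}, so $\Dif$ of $\groupa{bN(u),v}_{\hat G}$ in direction $(b_1,u_1)$ is $\groupa{b_1 N(u) + b\Dif N(u)[u_1],v}_{\hat G}$. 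Assembling the three pieces yields the stated formula for $\Dif R$. Iterating, the second derivative of the bilinear term is the symmetrised $\groupa{\Abfm_1\Grad u_2 + \Abfm_2\Grad u_1,\Grad v}_{\hat G}$ (the pure-$\Abfm$ and pure-$u$ second derivatives vanish by bilinearity), and the second derivative of the reaction term, using the Leibniz rule once more together with \eqref{eq:DNemyckii} applied to $\Dif^2 N$, gives $\groupa{b\Dif^2 N(u)[u_1,u_2] + b_1\Dif N(u)[u_2] + b_2\Dif N(u)[u_1],v}_{\hat G}$; note the absence of any $\Abfm$-dependence and of $f,g$ at second order, matching the claim.

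For the general $n$th derivative with $n\geq 3$, the point is that the diffusion term, being bilinear, has vanishing derivatives of order $\geq 3$, and the source terms have vanished already at order $2$; so only the reaction term $\groupa{bN(u),v}_{\hat G}$ survives. Differentiating $b\mapsto b$ at most once (its second derivative is zero) and $u\mapsto N(u)$ the remaining times, the $n$-fold Leibniz expansion collapses to exactly two types of contributions: the term where all $n$ directions hit $N(u)$, namely $\groupa{b\Dif^n N(u)[u_1,\ldots,u_n],v}_{\hat G}$, and the terms where exactly one direction — say the $\sigma(1)$-st — hits the factor $b$ while the other $n-1$ hit $N$, namely $\sum_{\sigma\in\Pi_n}\groupa{b_{\sigma(1)} N^{(n-1)}(u)[u_{\sigma(2)},\ldots,u_{\sigma(n)}],v}_{\hat G}$, where I use $\Dif^{n-1}N(u)[\cdot]=N^{(n-1)}(u)\cdot(\cdot)$ from \eqref{eq:DNemyckii}; symmetrising over all of $\Pi_n$ and dividing appropriately accounts for the combinatorial multiplicities, or alternatively one simply verifies the multilinear formula directly by polarisation of the monomials $\groupa{bu^j,v}_{\hat G}$. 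The main obstacle, such as it is, is purely bookkeeping: getting the Leibniz combinatorics exactly right so that the symmetrisation over $\Pi_n$ in the reaction term is neither over- nor under-counted, and checking that every term is well-defined as an element of $\Rcal=H^{-1}_{\mathrm D}(\hat G)$ via the embedding $H^1(\hat G)\hookrightarrow L^q(\hat G)$ (so that, e.g., $\Dif^n N(u)[u_1,\ldots,u_n]=\Nfrak^{(n)}(u)u_1\cdots u_n$ pairs continuously against $v\in H^1(\hat G)$, using $q\geq 2$). The final remark $\Dif^n R(d,u)=0$ for $n\geq\groupf{q-1}+2$ then follows since $\Nfrak$ has degree $\leq\groupf{q-1}$, so $\Nfrak^{(n)}\equiv 0$ once $n\geq\groupf{q-1}+1$, killing the surviving reaction contributions as well.
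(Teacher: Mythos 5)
Your proposal is correct and follows essentially the same route as the paper: decompose $R$ into its constituent terms, observe that each is a continuous (multi)linear, hence polynomial, map — using the embedding $H^1(\hat G)\subset L^q(\hat G)$ and the dual embedding $L^{q/(q-1)}(\hat G)\subset H^{-1}_{\mathrm D}(\hat G)$ for the reaction term — and then compute the Fr\'echet derivatives algebraically term by term via the Leibniz rule and the formula \eqref{eq:DNemyckii} for the Nemyckii operator. The paper's proof is merely a terser version of the same argument, so no further comparison is needed.
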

\begin{proof}
  It is straightforward to see that the first, third and fourth terms
  on the right-hand side of the definition
  \begin{equation*}
    \group[\big]{R(d, u)} (v)
    \isdef \groupa[\big]{\Abfm \Grad u, \Grad v}_{\hat{G}}
    + \groupa[\big]{b N(u), v}_{\hat{G}}
    - \groupa[\big]{f, v}_{\hat{G}}
    - \groupa[\big]{g, v}_{\hat{\Gamma}_{\mathrm{N}}}
  \end{equation*}
  are continuous polynomial maps between $\Dcal \times \Ucal$ and $\Rcal$.
  For the second term, it is clear that the map
  \begin{equation*}
    L^\infty(\hat{D}) \times L^q(\hat{D}) \to L^{q/(q-1)}(\hat{D}), 
    \quad (b,u) \mapsto b N(u)
  \end{equation*}
  is a continuous polynomial map
  between $L^\infty(\hat{G}) \times L^q(\hat{G})$ and $L^{q/(q-1)}(\hat{D})$.
  As we have $\Ucal = H^{1}_{\mathrm{D}}\group{\hat{G}} \subset L^q(\hat{G})$
  and by duality $L^{q/(q-1)}(\hat{G}) \subset H^{-1}_{\mathrm{D}}\group{\hat{G}} = \Rcal$,
  it is also a continuous polynomial between $L^\infty(\hat{G}) \times \Ucal$
  and $\Rcal$.
  Thus, $R$ is a continuous polynomial between $\Dcal$ and $\Rcal$.
  Lastly, the characterisations of the derivatives of $R$ now simply
  may be calculated algebraically and by using \eqref{eq:DNemyckii}.
\end{proof}

The characterisation of the Fr\'echet derivatives of $R$ can be used to derive
bounds of $\Dif^n R$. However, the fact that $R$ is a continuous polynomial map
trivially implies the following assertion.
\begin{proposition}\label{proposition:DR}
  Let $\Nfrak$ be a polynomial nonlinearity satisfying \eqref{eq:Npoly}.
  Then, for any bounded nonempty subsets $B \subset D$ and $V \subset \Ucal$
  there exists a constant $\varsigma \geq 1$ such that
  \begin{equation*}
    \norm[\big]{\Dif^n R(d, u)}_{\Bcal^n\group{\Dcal \times \Ucal; \Rcal}}
    \leq n! \varsigma
  \end{equation*}
  holds for all $n \in \Nbbb$ and all $d \in B$ and $u \in V$.
  Note that
  $\norm[\big]{\Dif^n R(d, u)}_{\Bcal^n\group{\Dcal \times \Ucal; \Rcal}} = 0$
  holds for $n \geq \groupf{q-1}+2$.
\end{proposition}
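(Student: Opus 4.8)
The plan is to exploit the polynomial structure of $R$ established in Lemma~\ref{lemma:DR}, which already tells us that $\Dif^n R(d,u) = 0$ for $n \geq \groupf{q-1}+2$, so only finitely many Fr\'echet derivatives are nontrivial. For each fixed order $n$ in the range $0 \leq n \leq \groupf{q-1}+1$, the map $(d,u) \mapsto \Dif^n R(d,u)$ is itself a continuous polynomial map from $\Dcal \times \Ucal$ into $\Bcal^n(\Dcal \times \Ucal; \Rcal)$ (differentiating a continuous polynomial map lowers its degree but preserves polynomiality and continuity). A continuous polynomial map between Banach spaces is bounded on bounded sets; hence on the bounded set $B \times V$ there is a finite constant $c_n$ with $\norm{\Dif^n R(d,u)}_{\Bcal^n(\Dcal \times \Ucal; \Rcal)} \leq c_n$ for all $(d,u) \in B \times V$.

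The next step is to package these finitely many bounds into the single inequality claimed. Since $\Dif^n R \equiv 0$ for $n \geq \groupf{q-1}+2$, it suffices to choose
\begin{equation*}
  \varsigma \isdef \max\groupb[\bigg]{1, \max_{0 \leq n \leq \groupf{q-1}+1} \frac{c_n}{n!}},
\end{equation*}
which is a finite maximum over finitely many terms, hence well-defined and at least $1$. Then for $0 \leq n \leq \groupf{q-1}+1$ we have $\norm{\Dif^n R(d,u)}_{\Bcal^n(\Dcal \times \Ucal; \Rcal)} \leq c_n \leq n! \varsigma$, while for $n \geq \groupf{q-1}+2$ the left-hand side is $0 \leq n! \varsigma$ trivially. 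This yields the asserted bound $\norm{\Dif^n R(d,u)}_{\Bcal^n(\Dcal \times \Ucal; \Rcal)} \leq n! \varsigma$ uniformly over $d \in B$, $u \in V$ and all $n \in \Nbbb$.

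The one genuine point requiring care — the ``main obstacle'' insofar as there is one — is the assertion that a continuous polynomial map between Banach spaces is bounded on bounded subsets, which is where the continuity hypothesis in Lemma~\ref{lemma:DR} is actually used. A continuous $k$-homogeneous polynomial $P$ arising from a bounded symmetric $k$-linear form $\hat{P}$ satisfies $\norm{P(x)} \leq \norm{\hat{P}}\,\norm{x}^k$, so it maps the ball of radius $\rho$ into the ball of radius $\norm{\hat{P}}\rho^k$; summing the finitely many homogeneous components of $\Dif^n R$ gives a bound on $B \times V$ once one fixes any $\rho$ with $B \times V$ contained in the ball of radius $\rho$ (using the max-norm on $\Dcal \times \Ucal$). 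Everything else is bookkeeping: one simply records that differentiation of the explicit polynomial $R$ from \eqref{eq:residual} produces, for each order, a polynomial map of the stated degree, and reads off the vanishing for $n \geq \groupf{q-1}+2$ directly from the last display in Lemma~\ref{lemma:DR} (the terms involve $\Dif^n N(u)$ and $N^{(n-1)}(u)$, both of which vanish once the order exceeds the degree $\groupf{q-1}$ of $\Nfrak$, noting $\Nfrak(0)=0$ removes the constant term so the relevant threshold in the residual is $\groupf{q-1}+2$).
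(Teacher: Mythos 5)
Your argument is correct and is precisely the reasoning the paper leaves unwritten: after Lemma~\ref{lemma:DR} the authors simply state that the fact that $R$ is a continuous polynomial map ``trivially implies'' Proposition~\ref{proposition:DR}, and your three steps (each $\Dif^n R$ is a continuous polynomial map, continuous polynomial maps are bounded on bounded sets, only finitely many orders are nonzero so a single $\varsigma \geq 1$ can be taken as a finite maximum) are exactly that unstated argument made explicit, including the correct verification that continuity plus the norm inequality $\norm{P(x)} \leq \norm{\hat P}\,\norm{x}^k$ for the homogeneous components gives boundedness on bounded sets.

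One small inaccuracy in your closing parenthetical: the vanishing threshold $n \geq \groupf{q-1}+2$ has nothing to do with $\Nfrak(0)=0$. That offset by $+2$ rather than $+1$ comes from the extra factor of $b$ multiplying $N(u)$, which raises the total polynomial degree of the term $b\,N(u)$ in $(d,u)$ to $\groupf{q-1}+1$; equivalently, in the last display of Lemma~\ref{lemma:DR} the cross term $b_{\sigma(1)} N^{(n-1)}(u)[\,\cdot\,]$ only dies once $n-1 > \groupf{q-1}$. The hypothesis $\Nfrak(0)=0$ merely removes a low-degree constant term and is irrelevant to the high-order vanishing. This does not affect the validity of your proof.
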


With these bounds at hand, 
we arrive at the bounds for the Fr\'echet derivatives
of the data-to-solution mapping $S$.
\begin{theorem}\label{theorem:main}
  Let $\Nfrak$ be a polynomial nonlinearity satisfying \eqref{eq:Npoly}
  and the monotonicity \eqref{eq:Nmonot}
  and let $\alpha \isdef c_{\textrm{PF}}^2 c_\Abfm^{-1}$.
  Then, for any bounded nonempty subset $B \subset D$ 
  there exists a constant $\varsigma \geq 1$ 
  such that
  for any $d \in B$ we have
  \begin{equation*}
    \norm[\big]{\Dif^n S(d)}_{\Bcal^n\group{\Dcal; \Ucal}}
    \leq \; n! \tilde{\varsigma} \tilde{\digamma}^n
  \end{equation*}
  for all \(n \in \Nbbb^*\), 
  where
  \begin{equation*}
    \tilde{\varsigma} = \frac{1}{c_\kappa \alpha \varsigma}
    \quad\text{and}\quad
    \tilde{\digamma} = c_\kappa \alpha^{2} \varsigma^{2} \text{.}
  \end{equation*}
\end{theorem}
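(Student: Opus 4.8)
The plan is to read Theorem~\ref{theorem:main} as the analytic specialisation $s = 1$, $\digamma = 1$ of the abstract Theorem~\ref{theorem:analyticDS}, fed with the structural estimates already obtained for the semilinear residual operator. The first point to settle is that the globally defined data-to-solution map $S$ is smooth enough for the Fa\`a di Bruno formulas \eqref{eq:firstDS} and \eqref{eq:higherDS} to apply to it. By Lemma~\ref{lemma:DR}, $R$ is a continuous polynomial map between $\Dcal \times \Ucal$ and $\Rcal$, hence real analytic and in particular $C^\infty$ on all of $D \times \Ucal$; by Proposition~\ref{proposition:S}, $\Dif_2 R(d,u)$ is a Banach space isomorphism at every $(d,u) \in D \times \Ucal$. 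Therefore the implicit mapping theorem applies locally at each $d^\star \in D$, and by the \emph{global} uniqueness of $S$ established from strong monotonicity, the local implicit solution map must coincide with $S$ near $d^\star$. Consequently $S \in C^\infty(D;\Ucal)$, and the derivative formulas \eqref{eq:firstDS}, \eqref{eq:higherDS} hold verbatim at every $d \in D$ (with $S^\star$ replaced by $S$).

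Next I would verify, at the points $(d, S(d))$ with $d \in B$, the two hypotheses needed for the estimate of Lemma~\ref{lemma:boundsDS}. Fix a bounded nonempty $B \subset D$. Since $S$ maps bounded sets to bounded sets, $V \isdef S(B)$ is a bounded nonempty subset of $\Ucal$, so Proposition~\ref{proposition:DR} furnishes a constant $\varsigma \geq 1$ with $\norm{\Dif^n R(d,u)}_{\Bcal^n(\Dcal \times \Ucal;\Rcal)} \leq n!\,\varsigma$ for all $n \in \Nbbb$ and all $d \in B$, $u \in V$; in particular $\norm{\Dif^n R(d,S(d))}_{\Bcal^n(\Dcal \times \Ucal;\Rcal)} \leq n!\,\varsigma$ for every $d \in B$, which is exactly \eqref{eq:DataReg} with $s = 1$ and $\digamma = 1$. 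Proposition~\ref{proposition:S} gives $\norm{(\Dif_2 R(d,S(d)))^{-1}}_{\Bcal(\Rcal;\Ucal)} \leq c_{\textrm{PF}}^2 c_\Abfm^{-1} = \alpha$, i.e.\ \eqref{eq:OpEqImpl}, and since $c_{\textrm{PF}} > 1$ and $c_\Abfm \leq 1$ we have $\alpha \geq c_{\textrm{PF}}^2 > 1$, so the normalisation $\alpha,\varsigma,\digamma \geq 1$ of Assumption~\ref{ass:GsReg} is met. By Remark~\ref{remark:locality}, the conclusion of Lemma~\ref{lemma:boundsDS} then holds at each $d \in B$ with $s = 1$ and $\digamma = 1$, giving $\norm{\Dif^n S(d)}_{\Bcal^n(\Dcal;\Ucal)} \leq n!\,\alpha^{2n-1}\varsigma^{2n-1}\kappa_n$ for all $n \in \Nbbb^*$.

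Finally I would insert the bound $\kappa_n \leq c_\kappa^{n-1}$ from Lemma~\ref{lemma:kappabound} and regroup the powers, exactly as in the passage from Lemma~\ref{lemma:boundsDS} to Theorem~\ref{theorem:analyticDS}: one has $n!\,\alpha^{2n-1}\varsigma^{2n-1}c_\kappa^{n-1} = n!\,\frac{1}{c_\kappa\alpha\varsigma}\,(c_\kappa\alpha^2\varsigma^2)^n = n!\,\tilde\varsigma\,\tilde\digamma^n$ with $\tilde\varsigma = \frac{1}{c_\kappa\alpha\varsigma}$ and $\tilde\digamma = c_\kappa\alpha^2\varsigma^2$, which is the assertion. (Since $R$ is a polynomial map, $\Dif^n R$ — and hence the summand in \eqref{eq:higherDS} — vanishes for $n \geq \groupf{q-1}+2$, so only finitely many derivatives of $S$ are non-trivial, but the stated geometric bound holds for all $n$ regardless.)

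The main obstacle is the first step: one must argue that $S$ — produced by the monotone-operator existence theorem and a priori only locally Lipschitz — is in fact infinitely Fr\'echet differentiable, so that the abstract apparatus of Section~\ref{sct:abstract} may be applied to it, and one must ensure that the uniform control of $\Dif^n R$ along the graph $\{(d,S(d)) : d \in B\}$ is legitimately supplied by Proposition~\ref{proposition:DR}, which requires $S(B)$ to be bounded (already recorded before the statement). Once smoothness and the locality Remark~\ref{remark:locality} are in hand, the remainder is merely the substitution $s = 1$, $\digamma = 1$ into estimates that have already been proved.
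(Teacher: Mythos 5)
Your proposal is correct and follows essentially the same route as the paper, which simply invokes Theorem~\ref{theorem:analyticDS} with $s=1$ and $\digamma=1$ after noting that Propositions~\ref{proposition:DR} and~\ref{proposition:S}, the bound \eqref{eq:Sbound} (ensuring $S(B)$ is bounded), and Remark~\ref{remark:locality} supply its premises. Your additional care in checking that the globally defined monotone-operator solution map coincides with the local implicit map $S^\star$, and is therefore $C^\infty$, is a detail the paper leaves implicit but does not change the argument.
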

\begin{proof}
  Simply apply Theorem~\ref{theorem:analyticDS} with $s=1$,
  upon noting that the Propositions~\ref{proposition:DR} and~\ref{proposition:S}
  together with equation~\eqref{eq:Sbound} and Remark~\ref{remark:locality}
  guarantee that its premises are fulfilled after setting $\digamma \isdef 1$.
\end{proof}

\subsection{Regularity of the parameters-to-data and the parameters-to-solution mapping}
\label{sec:RegParMap}
Having shown the regularity of the data-to-solution mapping $S$,
we next consider the smoothness of the parameters-to-data mapping 
$\tilde{d}$ in \eqref{eq:Par2Dat} and of the parameters-to-solution
mapping $\hat{u}$ stemming from \eqref{eq:wPDEpullback}.
To this end, we make the following assumption:
\begin{assumption}\label{assumption:Vbounds2}
  The parametric domain mapping $\Vbfm \colon \square \to C^1(\hat{G}; \Rbbb^m)$
  satisfies \eqref{eq:VBiLip} and 
  admits bounded mixed partial derivatives of arbitrary order in the sense that,
  for some constants $\mu_{\Vbfm}, \kappa_{\Vbfm} > 0$, $s \geq 1$
  and a $\gammabfm \in \ell^1(\Nbbb^*)$, these satisfy
  \begin{equation*}
    \norm[\big]{\pdif^{\alphabfm}_\ybfm \Vbfm[\ybfm]}_{C^1(\hat{G}; \Rbbb^m)}
    \leq \group[\big]{\norms{\alphabfm}!}^s \mu_{\Vbfm} \kappa_{\Vbfm}^{\norms{\alphabfm}} \gammabfm^{\alphabfm}
  \end{equation*}
  for all $\ybfm\in \square$ and all multi-indices $\alphabfm$.
\end{assumption}

Now, combining Theorem~\ref{theorem:Gevreyparametriccomp}
together with the results found in \cite{HKS23,HPS16,HS22},
cf.\ especially \cite[Lemmas~3 and~4]{HS22}, 
immediately imply smoothness of the parameters-to-data mapping $\tilde{d}$.
Moreover, as the parametric domain mapping $\Vbfm$ is a bounded 
map by considering $\alphabfm = \zerobfm$, this is also true
for the parameters-to-data mapping $\tilde{d}$.
Therefore, we also have smoothness of the parameters-to-solution
mapping $\hat{u}$ by combining Theorems~\ref{theorem:Gevreyparametriccomp}
and~\ref{theorem:main}, as $\tilde{d}(\square) \subset D$ holds.
Specifically, this gives the following result:
\begin{theorem}\label{theorem:dubounds}
  Let $\Nfrak$ be a polynomial nonlinearity satisfying \eqref{eq:Npoly}
  and the monotonicity \eqref{eq:Nmonot}
  and suppose that Assumption~\ref{assumption:Vbounds2} holds.
  Then, both the parameters-to-data mapping $\tilde{d} \colon \square \to D$
  and the parameters-to-solution mapping
  $\hat{u} \colon \square \to H^1_{\mathrm{D}}\group{\hat{G}}$
  have bounded mixed partial derivatives of arbitrary order
  and there exist constants $\mu_{\tilde{d}}, \kappa_{\tilde{d}} \geq 1$
  and $\mu_{\hat{u}}, \kappa_{\hat{u}} \geq 1$ such that
  \begin{equation*}
    \norm[\big]{\pdif^{\alphabfm}_\ybfm \tilde{d}[\ybfm]}_{\Dcal}
    \leq \group[\big]{\norms{\alphabfm}!}^s 
         \mu_{\tilde{d}} \kappa_{\tilde{d}}^{\norms{\alphabfm}} \gammabfm^{\alphabfm}
  \end{equation*}
  and
  \begin{equation*}
    \norm[\big]{\pdif^{\alphabfm} \hat{u}[\ybfm]}_{H^1_{\mathrm{D}}(\hat{G})}
    \leq \group[\big]{\norms{\alphabfm}!}^s \mu_{\hat{u}} \kappa_{\hat{u}}^{\norms{\alphabfm}} 
    \gammabfm^{\alphabfm}
  \end{equation*}
  hold for all $\ybfm\in \square$ and all multi-indices $\alphabfm$.
\end{theorem}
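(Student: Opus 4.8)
The plan is to establish the two estimates in succession --- first the $s$-Gevrey bound on the parameters-to-data map $\tilde{d}$, then the bound on $\hat{u} = S \circ \tilde{d}$ by composition --- using Theorem~\ref{theorem:Gevreyparametriccomp} as the workhorse in both cases.

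\emph{Bounds on $\tilde{d}$.} The four components $\tilde{\Abfm}[\ybfm]$, $\tilde{b}[\ybfm]$, $\tilde{f}[\ybfm]$, $\tilde{g}[\ybfm]$ of $\tilde{d}[\ybfm]$ are built from $\Vbfm[\ybfm]$ by: forming the Jacobian $\Jbfm[\ybfm] = \Dif_{\xbfm}\Vbfm[\ybfm]$ (a bounded linear operation, hence it transfers the bound of Assumption~\ref{assumption:Vbounds2} verbatim, with index $s$ and weight $\gammabfm$); taking its determinant (a polynomial in the entries, hence real analytic); taking its inverse and transpose (matrix inversion being real analytic on $\{\det \neq 0\}$, and \emph{uniformly} so on the bounded subset of $\Rbbb^{m \times m}$ that is bounded away from the singular matrices and cut out by the $C^1$-bi-Lipschitz bound \eqref{eq:VBiLip}); forming the Euclidean norm $\norm{(\Jbfm[\ybfm])^{-\trans}\nbfm}_2$ (real analytic away from $0$); and multiplying these by the fixed $L^\infty$-data $\hat{a}, \hat{b}, \hat{f}, \hat{g}$ (a bilinear, hence polynomial, hence real analytic operation). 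Each of the latter operations is therefore $1$-Gevrey with $\ybfm$-uniform constants on the relevant bounded regions, so Theorem~\ref{theorem:Gevreyparametriccomp} --- applied to each composition with $s_M = 1 \le s = s_P$ --- yields $s$-Gevrey mixed partial derivatives of arbitrary order with weight $\gammabfm$ for each component, hence, with the max-norm on $\Dcal$, for $\tilde{d}$ itself. This is exactly the content of \cite[Lemmas~3 and~4]{HS22} together with \cite{HKS23,HPS16}, which we invoke for the resulting constants $\mu_{\tilde{d}}, \kappa_{\tilde{d}} \ge 1$; boundedness of $\tilde{d}$ on $\square$ follows by taking $\alphabfm = \zerobfm$ in \eqref{eq:VBiLip}.

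\emph{Bounds on $\hat{u}$.} Ellipticity and non-negativity are preserved under the pullback (see the display at the end of Section~\ref{sec:ParDom}), so $\tilde{\Abfm}[\ybfm] \in D_{\Abfm}$ and $\tilde{b}[\ybfm] \in D_b$ for every $\ybfm$, whence $\tilde{d}(\square) \subset D$; by the previous step the image lies in a bounded nonempty subset $B \subset D$. Theorem~\ref{theorem:main} then furnishes a constant $\varsigma \ge 1$ with $\norm{\Dif^n S(d)}_{\Bcal^n(\Dcal;\Ucal)} \le n!\,\tilde{\varsigma}\,\tilde{\digamma}^n$ for all $d \in B$ and all $n \in \Nbbb^*$, i.e.\ $S$ is $1$-Gevrey at every $x = \tilde{d}[\ybfm]$ with uniform constants $\mu_M = \tilde{\varsigma}$, $\nu_M = \tilde{\digamma}$. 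Applying Theorem~\ref{theorem:Gevreyparametriccomp} with $P = \tilde{d}$ (so $s_P = s$, $\mu_P = \mu_{\tilde{d}}$, $\nu_P = \kappa_{\tilde{d}}$) and $M = S$ (so $s_M = 1$) gives, since $s = \max\{s,1\}$, that $\hat{u} = S \circ \tilde{d}$ has $s$-Gevrey mixed partial derivatives of arbitrary order with weight $\gammabfm$, with the explicit constants $\mu_{\hat{u}} = \mu_M \nu_M \mu_P/(\nu_M \mu_P + 1)$ and $\kappa_{\hat{u}} = (\nu_M \mu_P + 1)\kappa_{\tilde{d}}$, enlarged to be $\ge 1$ if necessary. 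This is the asserted bound.

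\emph{Main obstacle.} The delicate point is the first step: showing that the algebraic recipe producing $\tilde{\Abfm}, \tilde{b}, \tilde{f}, \tilde{g}$ from $\Vbfm$ preserves Gevrey regularity with explicit, $\ybfm$-uniform constants. The two facts that make it work are (i) that the matrix field $\Jbfm[\ybfm](\xbfm)$ stays inside an open set on which determinant and inversion are analytic with bounds uniform over $\xbfm \in \hat{G}$ and $\ybfm \in \square$ --- precisely what the uniform bi-Lipschitz bound \eqref{eq:VBiLip} secures --- and (ii) that products of $s$-Gevrey maps sharing a common weight $\gammabfm$ are again $s$-Gevrey with the same weight, a Leibniz computation that is a special case of Theorem~\ref{theorem:Gevreyparametriccomp} applied to the (polynomial) multiplication map. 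Granting these, the remainder is the constant-chasing already carried out in \cite{HS22}.
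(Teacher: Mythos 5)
Your proposal is correct and follows essentially the same route as the paper: the paper, in the paragraph preceding Theorem~\ref{theorem:dubounds}, likewise delegates the bounds on $\tilde{d}$ to Theorem~\ref{theorem:Gevreyparametriccomp} together with \cite[Lemmas~3 and~4]{HS22} (and \cite{HKS23,HPS16}), notes boundedness of $\tilde{d}$ via $\alphabfm=\zerobfm$, and then obtains the $\hat{u}$-bound by composing $S$ (Theorem~\ref{theorem:main}) with $\tilde{d}$ via Theorem~\ref{theorem:Gevreyparametriccomp}, using $\tilde{d}(\square)\subset D$. You merely unfold the algebraic chain (Jacobian $\to$ determinant/inverse/norm $\to$ multiplication by $\hat a,\hat b,\hat f,\hat g$) that those cited lemmas encapsulate and spell out the explicit constants supplied by Theorem~\ref{theorem:Gevreyparametriccomp}, which the paper leaves implicit — a faithful elaboration rather than a different argument.
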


We note that Theorems~\ref{theorem:main} and~\ref{theorem:dubounds}
themself do not actually rely on the fact
that $d \in D$ or $\tilde{d}[\ybfm]$ stem from a domain mapping.
Specifically, the bounds of the mixed partial derivatives of arbitrary order
of $\hat{u}$ are true for any parameters-to-data mapping $\tilde{d}$
that satisfies the bounds of the mixed partial derivatives of arbitrary order
given in Theorem~\ref{theorem:dubounds}, as long as
$\tilde{d}(\square) \subset D$ holds.
Therefore, Theorem~\ref{theorem:dubounds} is also applicable when considering
the semilinear elliptic PDE on a deterministic domain but with random coefficients,
as was considered in \cite{CD15,HS13} for example. Indeed, 
Theorems~\ref{theorem:main} and~\ref{theorem:dubounds} 
relate to \cite[Section 2.3 and especially Remark 2.6]{CD15}
when $s = 1$.
Theorem~\ref{theorem:main} 
implies that it is not necessary to restrict the data $d$ to a set 
$B \subset D$ that is compact in $\Dcal$ but that its boundedness suffices
in order to provide an explicit description of where a holomorphic 
extension can be defined by simply using the Cauchy--Hadamard formula,
see equation~\eqref{eq:radius}.

\section{Higher spatial regularity and non-polynomial nonlinearities}
\label{sec:SpatReg}
Up to this point, 
we have considered the solutions in their ``energy'' variational space. 
However, it is known
that analytic or Gevrey parametric regularity with
higher order spatial regularity of the solutions is mandatory for
achieving dimension-independent convergence rates
using multilevel quadrature or collocation methods, see 
\cite{GHM20,harbrecht2012multilevel,HS22,KSS12,TJWG15} for example.
Therefore, to demonstrate how higher spatial regularity of solutions
is also handled within the framework of Section~\ref{sct:abstract},
we consider particular cases of the semilinear PDE \eqref{eq:PDE}.
Especially, by leveraging the higher spatial regularity of the solution,
we also will be able to consider non-polynomial analytic as well as
$s$-Gevrey nonlinearities $\Nfrak$ here.
 
For this, we assume that $\hat{G} \subset \Rbbb^m$
has a $C^{1,1}$-smooth boundary,
that $\Gamma_{\textrm{N}} = \emptyset$, so that $H^1_{\mathrm{D}} = H^1_0$ holds,
and that the parametric domain mapping fulfils
$\Vbfm \colon \square \to C^{1,1}(\hat{G}; \Rbbb^m)$.
We assume that $\Vbfm[\ybfm]$ is a $C^{1,1}$-isomorphism
and fulfils the uniformity condition
\begin{equation}\label{eq:VBiHoel}
  \norm[\big]{\Vbfm[\ybfm]}_{C^{1,1}(\hat{G};\Vbfm[\ybfm](\hat{G}))} \leq c_{\Vbfm}
  \quad\text{and}\quad
  \norm[\big]{(\Vbfm[\ybfm])^{-1}}_{C^{1,1}(\Vbfm[\ybfm](\hat{G});\hat{G})} \leq c_{\Vbfm}
\end{equation}
for every $\ybfm \in \square$, where $c_{\Vbfm} \geq 1$ is a fixed constant.
Moreover, we restrict ourselves to the cases where $m \in \groupb{1, 2, 3}$.

We shall also assume that $\hat{a} \in W^{1,\infty}\group{\hat{G}}$,
$\hat{b} \in L^\infty\group{\hat{G}}$,
and $\hat{f} \in L^2\group{\hat{G}}$,
which means that
$a[\ybfm] \in W^{1,\infty}\group[\big]{G[\ybfm]}$,
$b[\ybfm] \in L^\infty\group[\big]{G[\ybfm]}$
and $f[\ybfm] \in L^2\group[\big]{G[\ybfm]}$
for every $\ybfm \in \square$. It is straightforward to 
see that these assumptions imply $\tilde{\Abfm}[\ybfm] \in 
W^{1,\infty}\group[\big]{\hat{G}; \Rbbb^{d \times d}_{\textrm{sym}}}$,
$\tilde{b}[\ybfm] \in L^\infty\group{\hat{G}}$ and 
$\tilde{f}[\ybfm] \in L^2_{\mathrm{D}}\group{\hat{G}}$
for every $\ybfm \in \square$.

Lastly, we assume 
that $\Nfrak \colon \Rbbb \to \Rbbb$ is monotonically increasing,
i.e.~that \eqref{eq:Nmonot} holds, and fulfils $\Nfrak(0) = 0$.
However, instead of requiring that it fulfils \eqref{eq:Npoly},
we require the stronger polynomial growth bound
that $\Nfrak$ fulfils
\begin{equation}\label{eq:Ngrow2}
\forall \zeta \in \Rbbb:\quad 
  \norms[\big]{\Nfrak(\zeta)} \leq c_{\Nfrak} \group[\big]{1 + \norms{\zeta}^{q/2}}
\end{equation}
for finite constants $c_{\Nfrak}>0$ and $q>0$
and that $\Nfrak$ is $s$-Gevrey for some fixed $s \geq 1$, i.e.\
for every compact $K \subset \Rbbb$,
there exists a constant $c_{\Nfrak,K} > 0$
such that for all $n\in \Nbbb$
\begin{equation}\label{eq:sGN}
  \sup_{\zeta \in K} | \Nfrak^{(n)}(\zeta) | \leq (c_{\Nfrak,K})^{n+1} (n!)^s \;.
\end{equation}
holds.

We observe that \eqref{eq:Ngrow2} is a global condition on $\Nfrak$,
whereas \eqref{eq:sGN} are localised to compacta $K\subset \Rbbb$.
As \eqref{eq:Ngrow}, 
condition \eqref{eq:Ngrow2} ensures global existence of weak solutions.
It also ensures their $H^2$ regularity, uniformly with respect to the data:
\eqref{eq:Ngrow2} ensures that one has 
the continuous Nemyckii operator associated with $\Nfrak$,
\begin{equation*}
  L^q\group[\big]{G[\ybfm]} \to L^2\group[\big]{G[\ybfm]},\ u \mapsto \Nfrak \circ u,
\end{equation*}
and that there holds the continuous embedding
$H^1_0\group[\big]{G[\ybfm]} \subset L^{q}\group[\big]{G[\ybfm]}$,
by the Sobolev embedding theorem,
and uniformly for every $\ybfm \in \square$.

On the other hand, condition \eqref{eq:sGN} will be sufficient to ensure
that we have an $s$-Gevrey smooth Nemyckii operator
associated with $\Nfrak$ defined as
\begin{equation}\label{eq:NemycOp}
  N \colon L^\infty\group[\big]{G[\ybfm]} \to L^\infty\group[\big]{G[\ybfm]}:  u \mapsto \Nfrak \circ u ,
\end{equation}
which we will use in combination with the continuous embedding
$H^2\group[\big]{G[\ybfm]} \subset L^\infty\group[\big]{G[\ybfm]}$,
being valid uniformly with respect to $\ybfm \in \square$,
that we have by the Sobolev embedding theorem.

\begin{example}\label{ex:Nonlin}
  We provide examples for the nonlinear term $\Nfrak$.
  \begin{enumerate}
  \item 
    A first valid example for a nonlinearity $\Nfrak$ is the cubic
    nonlinearity $\Nfrak(\zeta) = \zeta^3$. 
    Evidently, \eqref{eq:sGN} is valid with $s=1$.
    Also, \eqref{eq:Ngrow2} holds with $q=6$ and 
    $\Nfrak'(\zeta) = 3\zeta^2 \geq 0$ from which \eqref{eq:Nmonot} follows.
  \item
    A second example for $\Nfrak$ is 
    \begin{equation*}
      \Nfrak(\zeta) = \frac{\zeta^3}{1+\exp(-1/\zeta^2)} \;,\quad \zeta \ne 0.
    \end{equation*}
    For $\zeta \to 0$, the definition of $\Nfrak(\zeta)$ is completed with the 
    corresponding limits such as $\zeta^k \exp(-1/\zeta^2) \to 0$ for any
    finite $k$.
    One verifies that $\zeta\mapsto \Nfrak(\zeta)$ is smooth, 
    but not analytic, 
    and that for $0\ne \zeta \in \Rbbb$
    \begin{equation*}
      \Nfrak'(\zeta) 
      = \frac{3\zeta^2}{(1+\exp(-1/\zeta^2))}
      \left[ 1-\frac{1}{3} \zeta^{-2} \exp(-\zeta^{-2}) (1+\exp(-1/\zeta^2))^{-1} \right]
      \;,
    \end{equation*}
    so that $\Nfrak'(\zeta) > 0$ for all $\zeta\in \Rbbb$ whence
    the monotonicity of $\Nfrak$ in \eqref{eq:Nmonot} follows. 
    Evidently, 
    then also the growth condition \eqref{eq:Ngrow2} holds with $q=6$.
    Furthermore, $\Nfrak$ is $s$-Gevrey regular with $s \geq 3/2$.
  \item 
    An example of an analytic, nonpolynomial nonlinearity is 
    \begin{equation*}
      \Nfrak(\zeta) = 2 + \tanh(\zeta) 
      = 2 + \frac{\exp(\zeta)-\exp(-\zeta)}{\exp(\zeta)+\exp(-\zeta)}\;.
    \end{equation*}
    Then $\Nfrak(\zeta) \in [1,3]$ and $\Nfrak$ is analytic (i.e., $1$-Gevrey) 
    at all $\zeta\in \Rbbb$.
    Due to 
    \begin{equation*}
      \Nfrak'(\zeta) = 1/\cosh^2(\zeta) = 4/(\exp(\zeta)+\exp(-\zeta))^2 > 0
    \end{equation*}
    for all $\zeta\in \Rbbb$,
    it satisfies \eqref{eq:Nmonot} and also \eqref{eq:Ngrow2} with $q=0$.
  \item
    A nonlinearity which is \emph{not} covered is $\Nfrak(\zeta) = \exp(\zeta)$
    which appears in mathematical models of combustion, for example. 
    While $\Nfrak$ is analytic (condition \eqref{eq:sGN} even holds with $s=0$) 
    and monotone,  the polynomial growth condition \eqref{eq:Ngrow2} cannot be satisfied.
  \end{enumerate}
\end{example}

\subsection{Residual Equation}
\label{sec:ResEqnH2}
In view of equation~\eqref{eq:wPDEpullback} and 
Section~\ref{sct:abstract}, we choose
\begin{align}\label{eq:RegSpc} 
\nonumber
  \Dcal &\isdef W^{1,\infty}\group[\big]{\hat{G}; \Rbbb^{d \times d}_{\textrm{sym}}}
  \times L^\infty\group{\hat{G}}
  \times L^2\group{\hat{G}} , \\
  \Ucal &\isdef H^1_0\group{\hat{G}} \cap H^2\group{\hat{G}} , \\
\nonumber
  \Rcal &\isdef L^2\group{\hat{G}}
\end{align}
and the subset of admissible data
\begin{equation*}
  D \isdef D_{\Abfm}
  \times D_b
  \times L^2\group{\hat{G}} ,
\end{equation*}
where
\begin{equation*}
  D_{\Abfm} \isdef \groupb[\bigg]{\Abfm \in W^{1,\infty}\group[\big]{\hat{G}; \Rbbb^{m \times m}_{\textrm{sym}}} : \essinf_{\xbfm \in \hat{G}} \min_{\vbfm \in \Rbbb^m \setminus \{\zerobfm\}}\frac{\vbfm^\trans \Abfm(\xbfm) \vbfm}{\vbfm^\trans \vbfm} \geq c_a c_{\Vbfm}^{-m-2} \defis c_\Abfm}
\end{equation*}
and $D_b \isdef \groupb[\big]{b \in L^\infty\group[\big]{\hat{G}} : \essinf_{\xbfm \in \hat{G}} b(\xbfm) \geq 0}$.
For the sake of legibility, we will associate the data variable $d \in \Dcal$ 
to be given by $(\Abfm, b, f)$ and also extend this to 
modifications of $d$, i.e.\ $d_1 = (\Abfm_1, b_1, f_1)$.

Now, we define the residual operator $R \colon D \times \Ucal \to \Rcal$
by setting
\begin{equation}\label{eq:residualH2}
  R(d, u)
  \isdef - \Div\group{\Abfm \Grad u} + b N(u) - f .
\end{equation}
With this residual operator $R$,
equation~\eqref{eq:wPDEpullback} can be restated as the residual equation
\begin{equation*}
  R\group[\big]{\tilde{d}[\ybfm], \hat{u}[\ybfm]} = 0 \ \text{in}\ \Rcal,
\end{equation*}
where $\tilde{d} \colon \square \to D$ is the \emph{paramaters-to-data mapping}
given by
\begin{equation*}
  \tilde{d}[\ybfm] \isdef \group[\big]{\tilde{\Abfm}[\ybfm], \tilde{b}[\ybfm], \tilde{f}[\ybfm]} .
\end{equation*}
In this formulation, we now first consider the solvability of the residual equation
\begin{equation*}
  R(d, u) = 0
\end{equation*}
for a given $d \in D$ and unknown $u \in \Ucal$.
\begin{theorem}\label{theorem:solvH2}
  Let $\Nfrak$ satisfy the polynomial growth bound \eqref{eq:Ngrow2}
  and the monotonicity \eqref{eq:Nmonot}.
  Then, for every $d \in D$, there exists a unique $u_d \in \Ucal$ that fulfils
  the residual equation $R(d, u) = 0$ in $\Rcal$.
  Moreover, for any bounded nonempty subset $B \subset D$, there
  exists a constant $c_B > 0$ such that for all $d\in B$ holds
  \begin{equation}\label{eq:SboundH2}
    \norm{u_d}_{\Ucal}
    \leq c_B \norm{d}_{\Dcal}.
  \end{equation}
\end{theorem}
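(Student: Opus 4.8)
The plan is to split the argument into an energy-space solvability step and an elliptic-regularity bootstrap. \emph{First}, exactly as in the strong-monotonicity argument of Section~\ref{sec:ResOpEq}, I would pass to the $H^1_0$-realisation of the residual, $\tilde R(d,\cdot)\colon H^1_0(\hat G)\to H^{-1}(\hat G)$, given by $\tilde R(d,w)(v)\isdef\groupa[\big]{\Abfm\Grad w,\Grad v}_{\hat G}+\groupa[\big]{b\,N(w),v}_{\hat G}-\groupa[\big]{f,v}_{\hat G}$. By the Sobolev embedding $H^1_0(\hat G)\subset L^q(\hat G)$ together with the continuity of the Nemyckii operator $L^q(\hat G)\to L^2(\hat G)$ furnished by \eqref{eq:Ngrow2}, this $\tilde R(d,\cdot)$ is well-defined (as $N(w)\in L^2(\hat G)\subset H^{-1}(\hat G)$) and continuous, and it is strongly monotone with constant $c_{\textrm{PF}}^{-2}c_\Abfm$, the ellipticity of $\Abfm$, the monotonicity \eqref{eq:Nmonot} of $\Nfrak$ and the non-negativity of $b$ entering verbatim as before. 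Hence \cite[Theorem~11.2]{Deimling} yields a unique $u_d\in H^1_0(\hat G)$ with $\tilde R(d,u_d)=0$; testing with $u_d$ and using $\Nfrak(0)=0$ gives the uniform energy bound $\norm{u_d}_{H^1_0(\hat G)}\le c_{\textrm{PF}}^2c_\Abfm^{-1}\norm{f}_{L^2(\hat G)}\le c_{\textrm{PF}}^2c_\Abfm^{-1}\norm{d}_\Dcal$.

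\emph{Second}, I would lift $u_d$ to $\Ucal$. Since $u_d\in H^1_0(\hat G)\subset L^q(\hat G)$, the growth bound \eqref{eq:Ngrow2} — whose exponent $q/2$, smaller than the $q-1$ of \eqref{eq:Ngrow}, is precisely what is needed here — gives $N(u_d)\in L^2(\hat G)$ with $\norm{N(u_d)}_{L^2(\hat G)}$ bounded in terms of $\norm{u_d}_{H^1_0(\hat G)}$. Thus $\tilde f\isdef f-b\,N(u_d)\in L^2(\hat G)$, and $u_d$ is the $H^1_0$-weak solution of the linear problem $-\Div(\Abfm\Grad u_d)=\tilde f$. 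Because $\partial\hat G$ is $C^{1,1}$, $\Abfm\in W^{1,\infty}(\hat G;\Rbbb^{m\times m}_{\textrm{sym}})$ and $\Abfm$ is uniformly elliptic with constant $c_\Abfm$, standard $L^2$ (``$H^2$'') elliptic regularity for divergence-form operators gives $u_d\in H^2(\hat G)$, i.e.\ $u_d\in\Ucal$, together with $\norm{u_d}_{H^2(\hat G)}\le C_{\textrm{ell}}\norm{\tilde f}_{L^2(\hat G)}$, the constant $C_{\textrm{ell}}$ depending only on $\hat G$, $c_\Abfm$ and $\norm{\Abfm}_{W^{1,\infty}(\hat G)}$. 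Integrating by parts back (licit since $u_d\in H^2$, $\Abfm\in W^{1,\infty}$) shows $R(d,u_d)=0$ in $\Rcal=L^2(\hat G)$; and conversely, for any $u\in\Ucal$ with $R(d,u)=0$ in $\Rcal$ the same integration by parts yields $\tilde R(d,u)=0$ in $H^{-1}(\hat G)$, so $u=u_d$ by the first step. This gives existence and uniqueness of $u_d\in\Ucal$.

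\emph{Third}, for the bound \eqref{eq:SboundH2} I would argue that, on a bounded nonempty $B\subset D$, the quantities $\norm{\Abfm}_{W^{1,\infty}(\hat G)}$, $\norm{b}_{L^\infty(\hat G)}$ and $\norm{f}_{L^2(\hat G)}$ are bounded; by the first step so is $\norm{u_d}_{H^1_0(\hat G)}$, hence so are $\norm{N(u_d)}_{L^2(\hat G)}$, $\norm{\tilde f}_{L^2(\hat G)}$ and $C_{\textrm{ell}}$, all uniformly over $d\in B$. Combining the estimates yields $\norm{u_d}_\Ucal\le\hat c_B\bigl(1+\norm{d}_\Dcal\bigr)$ for some $\hat c_B>0$. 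Finally, the ellipticity constraint in $D_\Abfm$ forces $\norm{\Abfm}_{L^\infty(\hat G)}\ge c_\Abfm$, hence $\norm{d}_\Dcal\ge c_\Abfm>0$ for \emph{every} $d\in D$; therefore $\norm{u_d}_\Ucal\le c_B\norm{d}_\Dcal$ with $c_B\isdef\hat c_B(1+c_\Abfm^{-1})$, which is the claimed \eqref{eq:SboundH2}.

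The step I expect to be the crux is the $H^2$ elliptic-regularity estimate: it invokes global second-order regularity for a divergence-form operator with merely Lipschitz coefficients on a $C^{1,1}$ domain, and — decisively for the uniformity asserted in \eqref{eq:SboundH2} — one must verify that the constant $C_{\textrm{ell}}$ there is governed solely by $c_\Abfm$ and $\norm{\Abfm}_{W^{1,\infty}(\hat G)}$ (together with the fixed geometry of $\hat G$), with no hidden dependence on finer features of $\Abfm$. The remaining ingredients — the monotone-operator solvability, the Sobolev bookkeeping placing $N(u_d)$ in $L^2(\hat G)$, and the two integration-by-parts identifications — are routine.
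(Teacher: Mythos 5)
Your proposal is correct and follows essentially the same route as the paper: solvability and the energy bound via strong monotonicity of the $H^1_0$-realisation and \cite[Theorem~11.2]{Deimling}, followed by the elliptic-regularity lift to $H^2(\hat G)$ using \eqref{eq:Ngrow2} to place $f-b\,N(u_d)$ in $L^2(\hat G)$, with the regularity constant depending only on $\hat G$, $c_\Abfm$ and a bound for $\norm{\Abfm}_{W^{1,\infty}(\hat G)}$ — exactly the paper's $c_{\textrm{er}}$. The only (harmless) deviations are that you make the weak-versus-strong identification and the uniqueness in $\Ucal$ explicit, and that you obtain the final form of \eqref{eq:SboundH2} from $\norm{d}_\Dcal\geq c_\Abfm$ rather than by factoring $\norm{d}_\Dcal$ out of the $L^2$-bound on $f-b\,N(u_d)$ as the paper does.
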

\begin{proof}
  We let $\Xcal = H^1_0\group{\hat{G}}$, then we define the operator
  $T_d \colon \Xcal \to \Xcal'$ as
  \begin{equation*}
    \group[\big]{T_d(u)}(v)
    \isdef \groupa[\big]{\Abfm \Grad u, \Grad v}_{\hat{G}}
    + \groupa[\big]{b N(u), v}_{\hat{G}}
    - \groupa[\big]{f, v}_{\hat{G}} .
  \end{equation*}
  For $u_1, u_2 \in \Ucal$, we have
  \begin{multline*}
    \group[\big]{T_d(u_1) - T_d(u_2)}(u_1 - u_2) \\
    = \groupa[\big]{\Abfm \Grad u_1 - \Abfm \Grad u_2, \Grad u_1 - \Grad u_2}_{\hat{G}}
    + \groupa[\big]{b N(u_1) - b N(u_2), u_1 - u_2}_{\hat{G}} .
  \end{multline*}
  Since
  \begin{equation*}
    \groupa[\big]{\Abfm \Grad u_1 - \Abfm \Grad u_2, \Grad u_1 - \Grad u_2}_{\hat{G}}
    \geq c_\Abfm \groupa[\big]{\Grad (u_1 - u_2), \Grad (u_1 - u_2)}_{\hat{G}}
  \end{equation*}
  holds by ellipticity and
  \begin{equation*}
    \groupa[\big]{b N(u_1) - b N(u_2), u_1 - u_2}_{\hat{G}} \geq 0
  \end{equation*}
  holds by monotonicity of $N_K$ and non-negativity of $b$,
  we have that $T_d$ is strongly monotone with
  \begin{equation*}
    \group[\big]{T_d(u_1) - T_d(u_2)}(u_1 - u_2)
    \geq c_{\textrm{PF}}^{-2} c_\Abfm \norm{u_1 - u_2}_{\Xcal}^2 .
  \end{equation*}
  Here, $c_{\textrm{PF}} > 1$ is the Poincar\'e-Friedrichs constant satisfying
  \begin{equation*}
    \norm{v}_{\Xcal}^2 \leq c_{\textrm{PF}}^2 \groupa[\big]{\Grad v, \Grad v}_{\hat{G}}
  \end{equation*}
  for all $v \in \Xcal$.
  Hence,
  as $\Xcal$ is a real, separable Hilbert space for every $d \in D$,
  the theorem on monotone operator equations,
  see \cite[Theorem~11.2]{Deimling},
  implies the existence and uniqueness of a $u_d$ that
  fulfils the operator equation $T_d(u_d) = 0$.
  Moreover, using the strong monotonicity, we have
  \begin{equation*}
    c_{\textrm{PF}}^{-2} c_\Abfm \norm{u_d - 0}_{\Xcal}^2
    \leq \group[\big]{T_d(u_d) - T_d(0)}(u_d - 0)
    \leq \groupa[\big]{- T_d(0)}(u_d) ,
  \end{equation*}
  which yields that
  \begin{equation*}
    \norm{u_d}_{\Xcal}
    \leq c_{\textrm{PF}}^2 c_\Abfm^{-1} \norm{f}_{L^2\group{\hat{G}}}
    \leq c_{\textrm{PF}}^2 c_\Abfm^{-1} \norm{d}_{\Dcal} .
  \end{equation*}

  Next, we set $w_d \isdef u_d$ and note that $w_d$ fulfils
  the equation
  \begin{equation*}
    \groupa[\big]{\Abfm \Grad w_d, \Grad v}_{\hat{G}}
    = \groupa[\big]{f - b N(u_d), v}_{\hat{G}} .
  \end{equation*}
  As $u_d$ is in $L^q\group{\hat{G}}$, we have that the term
  $f - b N(u_d)$ is an element of $L^2\group{\hat{G}}$ with
  \begin{align*}
    \norm[\big]{f - b N(u_d)}_{L^2\group{\hat{G}}}
    &\leq \norm{f}_{L^2\group{\hat{G}}} + c_{\Nfrak} \norm{b}_{L^\infty\group{\hat{G}}}
    \group[\big]{1 + \norm{u_d}_{L^q\group{\hat{G}}}^{q/2}} \\
    &\leq \norm{d}_{\Dcal} \group[\big]{1 + c_{\Nfrak} + c_{\Nfrak} c_{\textrm{PF}}^q c_\Abfm^{-q/2} \norm{d}_{\Dcal}^{q/2}} .
  \end{align*}
  Therefore, by elliptic regularity, see \cite[Theorem 8.12]{GilbargTrudinger},
  we know that $w_d \in H^2\group{\hat{G}}$ with
  \begin{equation}\label{eq:regb}
    \norm{w_d}_{H^2\group{\hat{G}}}
    \leq c_{\textrm{er}} \norm{d}_{\Dcal} \group[\big]{1 + c_{\Nfrak} + c_{\Nfrak} c_{\textrm{PF}}^q c_\Abfm^{-q/2} \norm{d}_{\Dcal}^{q/2}},
  \end{equation}
  where $c_{\textrm{er}}$ only depends on $m$, $\hat{G}$, $c_\Abfm$
  and an upper bound for $\norm{\Abfm}_{W^{1,\infty}(\hat{G})}$.

  Obviously, we thus have that $u_d \in H^1_0\group{\hat{G}} \cap H^2\group{\hat{G}}$
  and that $u_d$ indeed fulfils the residual equation
  \begin{equation*}
    R(d, u_d) = 0 .
  \end{equation*}
  Moreover, for any bounded nonempty subset $B \subset D$, 
  the assertion follows 
  by setting
  \begin{equation*}
    c_B \geq c_{\textrm{er}} \group[\big]{1 + c_{\Nfrak} + c_{\Nfrak} c_{\textrm{PF}}^q c_\Abfm^{-q/2} K^{q/2}}
  \end{equation*}
  with $K = \sup_{d \in B} \norm{d}_{\Dcal}$
  and where $c_{\textrm{er}}$ is chosen with the upper bound $K$
  for $\norm{\Abfm}_{W^{1,\infty}(\hat{G})}$.
\end{proof}

We thus know that there exists a unique, global data-to-solution 
mapping $S \colon D \to \Ucal$ such that the equation
\begin{equation*}
  R\group[\big]{d, S(d)} = 0
\end{equation*}
is fulfilled for all $d \in D$.
Moreover,
we know that $S$ maps bounded nonempty subsets $B \subset D$
to bounded nonempty subsets $S(B) \subset \Ucal$.
Hence, the solutions of \eqref{eq:wPDEpullback} here can be stated as
$\hat{u}[\ybfm] = S\group[\big]{\tilde{d}[\ybfm]}$,
where $\hat{u} \colon \square \to H^1_0\group{\hat{G}} \cap H^2\group{\hat{G}}$
is the parameters-to-solution mapping.

\subsection{Regularity of the data-to-solution mapping}
\label{sec:RegDatSolH2}
We now focus on the regularity of the data-to-solution mapping $S$.
To this end, we first consider the regularity of the Nemyckii operator
$N \colon L^\infty\group{\hat{G}} \to L^\infty\group{\hat{G}}$.
Indeed, between these spaces it turns out that $N$ inherits the differentiability
and smoothness of $\Nfrak$.
\begin{lemma}\label{lemma:DN}
  Assume that $\hat{G}$ is a bounded Lipschitz domain and 
  the $s$-Gevrey regularity \eqref{eq:sGN} of the nonlinearity $\Nfrak$.
  Then, the Nemyckii operator
  \begin{equation*}
    N \colon L^\infty\group{\hat{G}} \to L^\infty\group{\hat{G}}, u \mapsto \Nfrak \circ u
  \end{equation*}
  is $s$-Gevrey and its Fr\'echet derivatives are given by
  \begin{equation*}
    \Dif^n N(u) [u_1, \ldots, u_n](\xbfm) = \Nfrak^{(n)}\group[\big]{u(\xbfm)} \cdot u_1(\xbfm) \cdots u_n(\xbfm) .
  \end{equation*}
  Moreover, for any bounded nonempty subset $V \subset L^\infty\group{\hat{G}}$,
  there exist constants $\varsigma, \digamma \geq 1$ such that
  \begin{equation*}
    \norm[\big]{\Dif^n N(u)}_{\Bcal^n\group{L^\infty\group{\hat{G}}; L^\infty\group{\hat{G}}}}
    \leq (n!)^s \varsigma \digamma^n
  \end{equation*}
  holds for all $n \in \Nbbb$ and all $u \in V$.
\end{lemma}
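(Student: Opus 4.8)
The plan is to first establish the claimed pointwise formula for $\Dif^n N(u)$, together with the Fr\'echet differentiability and continuity it encodes, and then to read off the quantitative bound, which will be immediate. For the derivative formula I would argue by induction on $n$. The base case $n = 1$ rests on the first-order Taylor expansion with integral remainder, applied pointwise: for $u, h \in L^\infty\group{\hat{G}}$ and almost every $\xbfm \in \hat{G}$,
\begin{equation*}
  \Nfrak\group[\big]{u(\xbfm) + h(\xbfm)} - \Nfrak\group[\big]{u(\xbfm)} - \Nfrak'\group[\big]{u(\xbfm)}\, h(\xbfm)
  = h(\xbfm)^2 \int_0^1 (1 - t)\, \Nfrak''\group[\big]{u(\xbfm) + t h(\xbfm)}\, \mathrm{d}t .
\end{equation*}
Since every value $u(\xbfm)$ and $u(\xbfm) + t h(\xbfm)$ lies in a fixed compact interval depending only on $\norm{u}_{L^\infty\group{\hat{G}}}$ and $\norm{h}_{L^\infty\group{\hat{G}}}$, the $s$-Gevrey bound \eqref{eq:sGN} in particular makes $\Nfrak''$ bounded there, so the right-hand side is $O\group[\big]{\norm{h}_{L^\infty\group{\hat{G}}}^2}$ in $L^\infty\group{\hat{G}}$; together with the fact that $h \mapsto \group{\Nfrak' \circ u} \cdot h$ is a bounded linear operator on $L^\infty\group{\hat{G}}$, whose norm is $\norm{\Nfrak' \circ u}_{L^\infty\group{\hat{G}}}$ and hence finite for the same reason, this shows that $N$ is Fr\'echet differentiable at $u$ with $\Dif N(u)[h] = \group{\Nfrak' \circ u} \cdot h$.

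For the inductive step I would differentiate $u \mapsto \Dif^{n-1} N(u)$, which by the induction hypothesis is the multiplication map $[u_1, \ldots, u_{n-1}] \mapsto \group{\Nfrak^{(n-1)} \circ u} \cdot u_1 \cdots u_{n-1}$ with operator norm $\norm{\Nfrak^{(n-1)} \circ u}_{L^\infty\group{\hat{G}}}$; since the only $u$-dependent factor is $\Nfrak^{(n-1)} \circ u$, the same Taylor argument applied to $\Nfrak^{(n-1)}$ in place of $\Nfrak$ yields $\Dif^n N(u)[u_1, \ldots, u_n] = \group{\Nfrak^{(n)} \circ u} \cdot u_1 \cdots u_n$. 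Continuity of $u \mapsto \Dif^n N(u)$ then follows because $\zeta \mapsto \Nfrak^{(n)}(\zeta)$ is uniformly continuous on compacta, so $u \mapsto \Nfrak^{(n)} \circ u$ is continuous from $L^\infty\group{\hat{G}}$ to $L^\infty\group{\hat{G}}$; in particular $N$ is infinitely Fr\'echet differentiable.

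For the bound, let $V \subset L^\infty\group{\hat{G}}$ be bounded and nonempty, set $M \isdef \sup_{u \in V} \norm{u}_{L^\infty\group{\hat{G}}}$ and $K \isdef [-M, M]$. Then, from the derivative formula and since every $u \in V$ takes values in $K$,
\begin{equation*}
  \norm[\big]{\Dif^n N(u)}_{\Bcal^n\group{L^\infty\group{\hat{G}}; L^\infty\group{\hat{G}}}}
  = \norm[\big]{\Nfrak^{(n)} \circ u}_{L^\infty\group{\hat{G}}}
  \leq \sup_{\zeta \in K} \norms[\big]{\Nfrak^{(n)}(\zeta)}
  \leq (c_{\Nfrak, K})^{n + 1} (n!)^s
\end{equation*}
for all $n \in \Nbbb$ and all $u \in V$, using \eqref{eq:sGN}. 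Choosing $\varsigma = \digamma \isdef \max\{1, c_{\Nfrak, K}\} \geq 1$ then gives $\norm[\big]{\Dif^n N(u)}_{\Bcal^n\group{L^\infty\group{\hat{G}}; L^\infty\group{\hat{G}}}} \leq (n!)^s \varsigma \digamma^n$, which is the asserted bound, and hence the $s$-Gevrey regularity of $N$.

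The main obstacle — and the reason the statement is formulated in $L^\infty\group{\hat{G}}$ rather than in some $L^p\group{\hat{G}}$ with $p < \infty$ — is precisely that a function in $L^\infty\group{\hat{G}}$ has essential range contained in a bounded interval, so that the $\zeta$-localised bounds \eqref{eq:sGN} turn into genuine uniform bounds on $\Nfrak^{(n)} \circ u$; without this, the composition would not even be smooth (cf.\ the discussion around \eqref{eq:DNemyckii}). The only slightly delicate routine point is the induction above: checking that the $n$th Taylor remainder is genuinely $o$ of the appropriate power of $\norm{h}_{L^\infty\group{\hat{G}}}$ and that differentiating the multilinear expression produces no extra terms — which works because the product rule involves only the single $u$-dependent factor $\Nfrak^{(n-1)} \circ u$.
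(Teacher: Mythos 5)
Your proof is correct and follows essentially the same path as the paper's. The paper frames it slightly differently — introducing the family of Nemyckii operators $N_n\colon u \mapsto \Nfrak^{(n)}\circ u$ and showing $\Dif N_n(u)[h] = N_{n+1}(u)\,h$ once (using a Lagrange-form remainder rather than your integral remainder), which then yields all $\Dif^n N$ at once — but the key observation in both arguments is identical: the only $u$-dependent factor in the candidate multilinear form is $\Nfrak^{(n-1)}\circ u$, so the operator norm of the Taylor error in $\Bcal^{n-1}(L^\infty;L^\infty)$ collapses to the $L^\infty$-norm of the scalar Taylor error for $\Nfrak^{(n-1)}$, and the $L^\infty$-range of $u$ lies in a fixed compact interval on which \eqref{eq:sGN} turns local Gevrey bounds on $\Nfrak$ into uniform bounds. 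Your explicit choice $\varsigma=\digamma=\max\{1,c_{\Nfrak,K}\}$ also matches the paper's implicit choice.
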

\begin{proof}
  We define the Nemyckii operators
  $N_n \colon L^\infty\group{\hat{G}} \to L^\infty\group{\hat{G}},\ u \mapsto \Nfrak^{(n)} \circ u$
  for all $n \in \Nbbb$.
  For any arbitrary open bounded nonempty subset $V \subset L^\infty\group{\hat{G}}$,
  there exists a $K > 0$
  such that $\norm{u}_{L^\infty\group{\hat{G}}} < K$ holds for all $u \in V$.
  As $\groups{-K, K}$ is compact in $\Rbbb$,
  there exist two constants $\varsigma, \digamma \geq 1$ such that
  \begin{equation*}
    \norms[\big]{\Nfrak^{(n)}(\zeta)}
    \leq (n!)^s \varsigma \digamma^n
  \end{equation*}
  holds for all $n \in \Nbbb$ and all $\zeta \in \groups{-K, K}$,
  by $s$-Gevrey smoothness of $\Nfrak$.

  Now, consider any $u \in V$ and $h \in L^\infty\group{\hat{G}}$
  with $u+h \in V$ and $h \neq 0$. 
  Thus, we can calculate
  \begin{multline*}
    \norm[\big]{N_n(u+h) - N_n(u) - N_{n+1}(u) h}_{L^\infty\group{\hat{G}}} \\
    = \sup_{\xbfm \in \hat{G}} \norms[\Big]{\Nfrak^{(n)}\group[\big]{u(\xbfm)+h(\xbfm)} - \Nfrak^{(n)}\group[\big]{u(\xbfm)} - \Nfrak^{(n+1)}\group[\big]{u(\xbfm)} h(\xbfm)}
  \end{multline*}
  Applying Taylor's formula for $\Nfrak^{(n)}$ yields
  \begin{equation*}
    \Nfrak^{(n)}\group[\big]{u(\xbfm)+h(\xbfm)}
    = \Nfrak^{(n)}\group[\big]{u(\xbfm)} + \Nfrak^{(n+1)}\group[\big]{u(\xbfm)} h(\xbfm) + \frac{1}{2!} \Nfrak^{(n+2)}\group[\big]{\xi_\xbfm} \group[\big]{h(\xbfm)}^2 ,
  \end{equation*}
  where $\xi_\xbfm$ lies in the convex hull of $u(\xbfm)$ and $u(\xbfm)+h(\xbfm)$.
  However, we have that $\xi_\xbfm \in \groups{-K, K}$ holds, and therefore also
  \begin{equation*}
    \norms[\bigg]{\frac{1}{2!} \Nfrak^{(n+2)}\group[\big]{\xi_\xbfm}}
    \leq \frac{\group[\big]{(n+2)!}^s}{2!} \varsigma \digamma^{n+2} .
  \end{equation*}
  This proves that we have
  \begin{equation*}
    \frac{\norm[\big]{N_n(u+h) - N_n(u) - N_{n+1}(u) h}_{L^\infty\group{\hat{G}}}}{\norm{h}_{L^\infty\group{\hat{G}}}}
    \leq \frac{\group[\big]{(n+2)!}^s}{2!} \varsigma \digamma^{n+2} \norm{h}_{L^\infty\group{\hat{G}}}.
  \end{equation*}
  Hence, this implies that $N_n \colon L^\infty\group{\hat{G}} \to L^\infty\group{\hat{G}}$
  is Fr\'echet differentiable with its derivative given by
  \begin{equation*}
    \Dif N_n(u)[h] = N_{n+1}(u) h .
  \end{equation*}
  Noting that $N_0 = N$, we thus inductively have that $N$ is infinitely Fr\'echet
  differentiable for any $u \in V$ and its derivatives are given by
  \begin{equation*}
    \Dif^n N(u) [u_1, \ldots, u_n] = N_n(u) u_1 \cdots u_n .
  \end{equation*}
  As $V$ can be chosen as an open bounded ball around any $u \in L^\infty\group{\hat{G}}$,
  this indeed shows that $N$ is infinitely Fr\'echet differentiable everywhere.

  Finally, noting that
  \begin{equation*}
    \norm[\big]{N_n(u)}_{L^\infty\group{\hat{G}}}
    \leq (n!)^s \varsigma \digamma^n
  \end{equation*}
  holds for all $n \in \Nbbb$ and all $u \in V$
  implies the final assertion, which in turn proves 
  the $s$-Gevrey smoothness of $N$.
\end{proof}

Using the regularity of the Nemyckii operator
$N \colon L^\infty\group{\hat{G}} \to L^\infty\group{\hat{G}}$
in Lemma~\ref{lemma:DN}
and the continuous embeddings $\Ucal \subset L^\infty\group{\hat{G}}$
and $L^\infty\group{\hat{G}} \subset \Rcal$
valid for the choices \eqref{eq:RegSpc} and $m\leq 3$,
implies the following regularity of the residual operator $R$.

\begin{proposition}\label{proposition:DRH2}
  Let $\Nfrak$ satisfy the polynomial growth bound \eqref{eq:Ngrow2}
  and the derivative bounds \eqref{eq:sGN}.

  Then, $R \colon D \times \Ucal \to \Rcal$ is $s$-Gevrey smooth between
  the Banach spaces $\Dcal \times \Ucal$ and $\Rcal$ 
  as in \eqref{eq:RegSpc}.
  Indeed, for any bounded nonempty subsets $B \subset \Dcal$ and $V \subset \Ucal$,
  there exist constants $\varsigma, \digamma \geq 1$ such that
  \begin{equation*}
    \norm[\big]{\Dif^n R(d, u)}_{\Bcal^n\group{\Dcal \times \Ucal; \Rcal}}
    \leq (n!)^s \varsigma \digamma^n
  \end{equation*}
  holds for all $n \in \Nbbb$ and all $d \in B$ and $u \in V$.
\end{proposition}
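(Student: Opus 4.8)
The plan is to split the residual operator \eqref{eq:residualH2} as $R=R_1+R_2+R_3$ with
\begin{equation*}
  R_1(d,u)\isdef-\Div\group{\Abfm\Grad u},\qquad
  R_2(d,u)\isdef b\,N(u),\qquad
  R_3(d,u)\isdef-f ,
\end{equation*}
and to treat the three summands separately. The maps $R_1$ and $R_3$ carry no Gevrey content. Expanding
\begin{equation*}
  \Div\group{\Abfm\Grad u}
  =\sum_{i,j=1}^{m}\group{\partial_i A_{ij}}\,\partial_j u+\sum_{i,j=1}^{m}A_{ij}\,\partial_i\partial_j u
\end{equation*}
and using $\Abfm\in W^{1,\infty}\group{\hat{G}}$ together with $u\in\Ucal\subset H^2\group{\hat{G}}$ shows that $R_1$ is, up to the bounded linear coordinate projection $d=(\Abfm,b,f)\mapsto\Abfm$, a bounded bilinear map $\Dcal\times\Ucal\to\Rcal$ with $\norm{R_1(d,u)}_{\Rcal}\le c\,\norm{\Abfm}_{W^{1,\infty}(\hat{G})}\norm{u}_{H^2(\hat{G})}$ for a constant $c$ depending only on $m$; hence $\Dif^n R_1\equiv0$ for $n\ge3$, $\Dif^2 R_1$ is a constant bounded bilinear map, and $\norm{\Dif R_1(d,u)}_{\Bcal\group{\Dcal\times\Ucal;\Rcal}}$ is bounded uniformly over any bounded $B\times V$. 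Likewise $R_3$ is continuous affine, so $\Dif R_3$ is the constant map $(d_1,u_1)\mapsto-f_1$ of norm at most $1$ and $\Dif^n R_3\equiv0$ for $n\ge2$. Consequently $R_1$ and $R_3$ each satisfy a bound of the asserted form $(n!)^s\varsigma_i\digamma_i^n$ uniformly over any bounded $B\times V$, for suitable $\varsigma_i,\digamma_i\ge1$.

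The substance is $R_2$. By the continuous embeddings $\Ucal\subset L^\infty\group{\hat{G}}$ (valid since $m\le3$) and $L^\infty\group{\hat{G}}\subset\Rcal=L^2\group{\hat{G}}$ recalled above, $R_2$ indeed maps into $\Rcal$; write $c_\iota,c_{\hat{G}}\ge1$ for the respective embedding constants. Applying Lemma~\ref{lemma:DN} to the bounded subset $V'\isdef\{\iota(u):u\in V\}\subset L^\infty\group{\hat{G}}$, the image of $V$ under the bounded inclusion $\iota\colon\Ucal\hookrightarrow L^\infty\group{\hat{G}}$, furnishes constants $\varsigma_N,\digamma_N\ge1$ with
\begin{equation*}
  \norm{\Dif^n N(u)}_{\Bcal^n\group{L^\infty(\hat{G});L^\infty(\hat{G})}}\le(n!)^s\varsigma_N\digamma_N^n
\end{equation*}
for all $n\in\Nbbb$ and all $u\in V$ (only the $s$-Gevrey bound \eqref{eq:sGN} enters here; \eqref{eq:Ngrow2} served for solvability only). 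Since $b\,N(u)$ depends linearly on $b$, the Leibniz rule — equivalently, the Fa\`a di Bruno formula applied to $N$ composed with the bounded linear projection $\pi_u\colon(d,u)\mapsto u$ — gives, for $n\ge1$ and $X_k=(d_k,u_k)$,
\begin{equation*}
\begin{aligned}
  \Dif^n R_2(d,u)[X_1,\dots,X_n]
  &=b\,\Dif^n N(u)[u_1,\dots,u_n]\\
  &\quad+\sum_{k=1}^{n}b_k\,\Dif^{n-1}N(u)\bigl[(u_j)_{j\ne k}\bigr] ,
\end{aligned}
\end{equation*}
which is verified by a one-line induction on $n$; the case $n=0$ is the trivial bound $\norm{R_2(d,u)}_{\Rcal}\le c_{\hat{G}}C_B\varsigma_N$, where $C_B\isdef\sup_{d\in B}\norm{d}_{\Dcal}$.

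It remains to estimate. Using $\norm{w}_{L^2(\hat{G})}\le c_{\hat{G}}\norm{w}_{L^\infty(\hat{G})}$, the bound $\norm{u_k}_{L^\infty(\hat{G})}\le c_\iota\norm{X_k}_{\Dcal\times\Ucal}$, and the product-norm inequalities $\norm{b}_{L^\infty(\hat{G})}\le C_B$ and $\norm{b_k}_{L^\infty(\hat{G})}\le\norm{X_k}_{\Dcal\times\Ucal}$, the first term above is bounded by $c_{\hat{G}}C_B(n!)^s\varsigma_N(\digamma_N c_\iota)^n\prod_{k}\norm{X_k}_{\Dcal\times\Ucal}$ and the $k$-sum by $c_{\hat{G}}\,n\,((n-1)!)^s\varsigma_N(\digamma_N c_\iota)^{n-1}\prod_{k}\norm{X_k}_{\Dcal\times\Ucal}$. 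The elementary inequality $n\,((n-1)!)^s\le(n!)^s$, valid for $s\ge1$ because $n^s\ge n$, absorbs the stray factor $n$, so, taking the supremum over $\norm{X_k}_{\Dcal\times\Ucal}\le1$,
\begin{equation*}
  \sup_{d\in B,\;u\in V}\norm{\Dif^n R_2(d,u)}_{\Bcal^n\group{\Dcal\times\Ucal;\Rcal}}
  \le(n!)^s\,\varsigma_2\,\digamma_2^n,\qquad
  \varsigma_2\isdef c_{\hat{G}}(C_B+1)\varsigma_N,\quad
  \digamma_2\isdef\max\{1,\digamma_N c_\iota\}.
\end{equation*}
Finally, a finite sum of maps each satisfying such a bound again does, with the multiplicative constants added and the geometric constants maximised; combining the estimates for $R_1$, $R_2$ and $R_3$ and, if necessary, enlarging the constants to ensure $\varsigma,\digamma\ge1$ gives the claim.

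The only genuine input is Lemma~\ref{lemma:DN}; the rest is bookkeeping, the points needing care being that $R_1$ actually lands in $L^2\group{\hat{G}}$ and is bounded bilinear (this is where $\Abfm\in W^{1,\infty}$ and $u\in H^2$ with $m\le3$ are used), the consistent use of the $\max$-norm on $\Dcal\times\Ucal$ so that all differentiation directions $X_k$ are handled uniformly, and the truncation of the Leibniz expansion caused by linearity in $b$. Alternatively one can write $R_2=\mathrm{mult}\circ\group{\pi_b,\iota\circ N\circ\pi_u}$ with $\mathrm{mult}\colon L^\infty\group{\hat{G}}\times L^\infty\group{\hat{G}}\to L^2\group{\hat{G}}$ the bounded bilinear pointwise product and invoke Theorem~\ref{theorem:Gevreycomp} (bounded linear and bounded bilinear maps are $1$-Gevrey, and $\iota\circ N\circ\pi_u$ is $s$-Gevrey by Lemma~\ref{lemma:DN} and Theorem~\ref{theorem:Gevreycomp}); the direct computation above is preferred only because it yields the explicit constants in the statement.
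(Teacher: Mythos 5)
Your proposal is correct and follows essentially the same route as the paper: split $R$ into the bilinear term $-\Div(\Abfm\Grad u)$, the affine term $-f$, and the nonlinear term $b\,N(u)$; control the first two trivially; and reduce the third to Lemma~\ref{lemma:DN} via the embeddings $\Ucal\subset L^\infty(\hat G)\subset\Rcal$ and the Leibniz rule. The only difference is presentational — the paper phrases the nonlinear term as the composite $\iota^*_\Rcal\circ(\times N\circ)\circ\iota_\Ucal$ and leaves the constant-tracking implicit, whereas you write out the Leibniz expansion and the resulting bound explicitly (and note the composition-theorem alternative yourself), which if anything makes the "simple bookkeeping" concrete.
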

\begin{proof}
  We recall that, per equation \eqref{eq:residualH2}, $R$ is given by
  \begin{equation*}
    \group[\big]{R(d, u)}(v)
    \isdef \groupa[\big]{\Abfm \Grad u, \Grad v}_{\hat{G}}
    + \groupa[\big]{b N(u), v}_{\hat{G}}
    - \groupa[\big]{f, v}_{\hat{G}} .
  \end{equation*}
  Obviously, the first term in $R(d, u)$ amounts to a bounded bilinear form
  \begin{equation*}
    \Afrak : \Dcal \times \Ucal \to \Rcal, (d, u) \mapsto \group[\Big]{v \mapsto \groupa[\big]{\Abfm \Grad u, \Grad v}_{\hat{G}}}
  \end{equation*}
  and, therefore, is an analytic mapping and thus $s$-Gevrey smooth for every $s\geq 1$.

  Similarily, the third term in $R(d, u)$ amounts to a bounded linear form
  \begin{equation*}
    \Ffrak : \Dcal \times \Ucal \to \Rcal, (d, u) \mapsto \group[\Big]{v \mapsto \groupa[\big]{f, v}_{\hat{G}}} ,
  \end{equation*}
  which also is an analytic mapping and thus $s$-Gevrey smooth  for every $s\geq 1$.

  As the Nemyckii operator
  $N \colon L^\infty\group{\hat{G}} \to L^\infty\group{\hat{G}}$ 
  is $s$-Gevrey smooth per Lemma~\ref{lemma:DN}, by using the 
  Leibniz formula, the mapping
  \begin{equation*}
    \times N\circ : L^\infty\group{\hat{G}} \times L^\infty\group{\hat{G}} \to L^\infty\group{\hat{G}}, (b, u) \mapsto b N(u)
  \end{equation*}
  with $\times$ denoting the bilinear map of pointwise a.e. multiplication
  of pairs of elements in 
  $L^\infty(\hat{G})$ (which is continuous, cf.\ e.g. \cite[Prop.~1.1]{BCD11})
  also is $s$-Gevrey smooth.

  With the continuity of the linear embedding maps
  \begin{equation*}
    \iota_\Ucal : \Ucal \subset L^\infty\group{\hat{G}} \;\;\mbox{and}\;\;
    \iota^*_\Rcal : L^\infty\group{\hat{G}} \subset \Rcal,
  \end{equation*}
  it follows that the composite mapping 
  $\Mfrak := \iota^*_\Rcal \circ (\times N \circ) \circ \iota_\Ucal$
  \begin{equation*}
   \Mfrak :  \Dcal \times \Ucal \to \Rcal, (d, u) \mapsto \group[\Big]{v \mapsto \groupa[\big]{b N(u), v}_{\hat{G}}}
  \end{equation*}
  is $s$-Gevrey smooth.

  By linearity of differentials,
  the sum $\Afrak + \Mfrak + \Ffrak : \Dcal \times \Ucal \to \Rcal$
  is $s$-Gevrey smooth.

  The fact that one can find constants $\varsigma, \digamma \geq 1$
  for any bounded nonempty subsets $B \subset \Dcal$ and $V \subset \Ucal$
  follows by simple bookkeeping of the constants using Lemma~\ref{lemma:DN}.
  This completes the proof.
\end{proof}

Moreover, concerning the inverse of
$\Dif_2 R (d, u) \in \Bcal(\Ucal; \Rcal)$,
we have the following result.
\begin{proposition}\label{proposition:SH2}
  Let $\Nfrak$ satisfy the polynomial growth bound \eqref{eq:Ngrow2}
  and the monotonicity \eqref{eq:Nmonot}.
  Then, $\Dif_2 R (d, u) \in \Bcal(\Ucal; \Rcal)$ is a Banach space isomorphism
  for any $d \in D$ and $u \in \Ucal$.
  Indeed, for any bounded nonempty subsets $B \subset D$ and $V \subset \Ucal$,
  there exists a constant $\alpha \geq 1$ such that
  \begin{equation*}
    \norm[\Big]{\group[\big]{\Dif_2 R\group{d, u}}^{-1}}_{\Bcal\group{\Rcal; \Ucal}}
    \leq \alpha
  \end{equation*}
  holds for all $d \in B$ and $u \in V$.
\end{proposition}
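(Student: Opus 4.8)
The plan is to compute $\Dif_2 R(d,u)$ explicitly, exhibit it as a linear elliptic operator with a non-negative zeroth-order perturbation, and then combine the Lax--Milgram-type strong monotonicity estimate on $H^1_0(\hat G)$ with elliptic $H^2$-regularity to obtain the claimed uniform bound between the $H^2$-based spaces in \eqref{eq:RegSpc}. First I would record, using Lemma~\ref{lemma:DN} and the characterisation $\Dif N(u)[u_1] = (\Nfrak'\circ u)\, u_1$, that for $d=(\Abfm,b,f)$ and $u\in\Ucal$ the linear map $L := \Dif_2 R(d,u)\in\Bcal(\Ucal;\Rcal)$ is given by $L u_1 = -\Div(\Abfm\Grad u_1) + b\,(\Nfrak'\circ u)\, u_1$; here $\Nfrak'\circ u \in L^\infty(\hat G)$ since $u\in\Ucal\subset L^\infty(\hat G)$ and $\Nfrak'$ is continuous, and $b\,(\Nfrak'\circ u) \geq 0$ a.e.\ by monotonicity \eqref{eq:Nmonot} of $\Nfrak$ and non-negativity of $b$. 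In particular the associated bilinear form $a_L(u_1,v) := \groupa{\Abfm\Grad u_1,\Grad v}_{\hat G} + \groupa{b(\Nfrak'\circ u)u_1, v}_{\hat G}$ on $H^1_0(\hat G)\times H^1_0(\hat G)$ is bounded and coercive with coercivity constant $\geq c_{\textrm{PF}}^{-2} c_\Abfm$, exactly as in the proof of Theorem~\ref{theorem:solvH2}.

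Next I would argue injectivity and surjectivity of $L$ as a map $\Ucal\to\Rcal$. Coercivity of $a_L$ plus Lax--Milgram gives, for every $g\in L^2(\hat G)\subset H^{-1}(\hat G)$, a unique $w\in H^1_0(\hat G)$ with $a_L(w,v)=\groupa{g,v}_{\hat G}$ for all $v$, and $\norm{w}_{H^1_0(\hat G)}\leq c_{\textrm{PF}}^2 c_\Abfm^{-1}\norm{g}_{L^2(\hat G)}$ (using the Poincar\'e--Friedrichs inequality to pass from the $H^{-1}$-pairing bound to an $L^2$ bound on the right). Then $w$ solves $-\Div(\Abfm\Grad w) = g - b(\Nfrak'\circ u)w$ weakly, and the right-hand side lies in $L^2(\hat G)$ with norm controlled by $\norm{g}_{L^2(\hat G)} + \norm{b}_{L^\infty(\hat G)}\norm{\Nfrak'\circ u}_{L^\infty(\hat G)}\norm{w}_{L^2(\hat G)}$; elliptic regularity \cite[Theorem~8.12]{GilbargTrudinger}, valid here because $\hat G$ has $C^{1,1}$ boundary and $\Abfm\in W^{1,\infty}(\hat G;\Rbbb^{m\times m}_{\textrm{sym}})$, yields $w\in H^2(\hat G)$ and the estimate
\begin{equation*}
  \norm{w}_{H^2(\hat G)} \leq c_{\textrm{er}}\Bigl(\norm{g}_{L^2(\hat G)} + \norm{b}_{L^\infty(\hat G)}\norm{\Nfrak'\circ u}_{L^\infty(\hat G)}\,\norm{w}_{L^2(\hat G)}\Bigr),
\end{equation*}
where $c_{\textrm{er}}$ depends only on $m$, $\hat G$, $c_\Abfm$ and an upper bound for $\norm{\Abfm}_{W^{1,\infty}(\hat G)}$. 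Combining with the $H^1_0$-bound on $w$ shows $w\in\Ucal$ and $\norm{w}_\Ucal\leq C\,\norm{g}_{L^2(\hat G)}$, so $L$ is a continuous bijection $\Ucal\to\Rcal$; by the open mapping theorem it is a Banach space isomorphism and $\norm{L^{-1}}_{\Bcal(\Rcal;\Ucal)}\leq C$.

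Finally I would make the dependence of $C$ on $(d,u)$ quantitative and uniform over bounded sets. Given bounded nonempty $B\subset D$ and $V\subset\Ucal$, set $K_d := \sup_{d\in B}\norm{d}_\Dcal$ and $K_u := \sup_{u\in V}\norm{u}_\Ucal$; by the continuous embedding $\Ucal\subset L^\infty(\hat G)$ (valid since $m\leq 3$) we get $\sup_{u\in V}\norm{u}_{L^\infty(\hat G)}\leq c\,K_u =: \rho$, and then $\sup_{u\in V}\norm{\Nfrak'\circ u}_{L^\infty(\hat G)}\leq \sup_{|\zeta|\leq\rho}|\Nfrak'(\zeta)| =: M$, a finite constant by continuity of $\Nfrak'$ (indeed controlled by \eqref{eq:sGN} with $n=1$ on the compact $[-\rho,\rho]$). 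Choosing $c_{\textrm{er}}$ with the upper bound $K_d$ for $\norm{\Abfm}_{W^{1,\infty}(\hat G)}$ and tracking the constants through the two estimates above produces a single $\alpha\geq 1$, depending only on $m$, $\hat G$, $c_\Abfm$, $c_{\textrm{PF}}$, $K_d$ and $M$, with $\norm{(\Dif_2 R(d,u))^{-1}}_{\Bcal(\Rcal;\Ucal)}\leq\alpha$ for all $d\in B$, $u\in V$.

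The routine parts are the Lax--Milgram and bookkeeping steps; the one point needing genuine care is the elliptic-regularity bootstrap, since $\Dif_2 R(d,u)$ is a priori only bounded $\Ucal\to\Rcal=L^2(\hat G)$ and its inverse must be shown to map $L^2(\hat G)$ back into $\Ucal=H^1_0\cap H^2$ with uniformly controlled norm — this is exactly where the $C^{1,1}$-regularity of $\partial\hat G$, the $W^{1,\infty}$-regularity of $\Abfm$ and the restriction $m\leq 3$ (to absorb $b(\Nfrak'\circ u)u_1$ into $L^2$ via $\Ucal\subset L^\infty$) are all used, and it is the step I expect to be the main obstacle to getting the constant genuinely uniform over $B\times V$.
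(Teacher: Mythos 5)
Your proof is correct and follows essentially the same route as the paper: the paper's proof simply recasts $\Dif_2 R(d,u)[u_1]=w$ as an auxiliary affine residual $T\bigl((d,u,w),u_1\bigr) \isdef -\Div(\Abfm\Grad u_1)+b\Dif N(u)[u_1]-w$ and declares the arguments of Theorem~\ref{theorem:solvH2} to apply, which is exactly the strong-monotonicity/Lax--Milgram step on $H^1_0(\hat G)$ plus the elliptic $H^2$-regularity bootstrap that you spell out. Your more explicit treatment of uniformity over $B\times V$ (via $\Ucal\subset L^\infty(\hat G)$ for $m\leq 3$ and bounding $\Nfrak'$ on a compact interval) is precisely the bookkeeping the paper leaves implicit.
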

\begin{proof}
  For any $d \in D$ and $u \in \Ucal$, we have that
  $\Dif_2 R (d, u) \in \Bcal(\Ucal; \Rcal)$ is given by
  \begin{equation*}
    \Dif_2 R(d, u)\groups{u_1}
    = - \Div\group{\Abfm \Grad u_1} + b \Dif N(u)[u_1] .
  \end{equation*}
  Thus, we consider the affine residual equation defined by the residual
  $T \colon D \times \Ucal \times L^\infty\group{\hat{G}} \to \Rcal$ given by
  \begin{equation*}
    T\group[\big]{(d,u,w), u_1}
    \isdef - \Div\group{\Abfm \Grad u_1} + b \Dif N(u)[u_1] - w .
  \end{equation*}
  Now, completely analogous arguments as used in the proof of
  Theorem~\ref{theorem:solvH2} prove the stated assertions.
\end{proof}

With these bounds at hand, 
we arrive at the bounds for the Fr\'echet derivatives
of the data-to-solution mapping $S$.
\begin{theorem}\label{theorem:mainH2}
  Let $\Nfrak$ satisfy the polynomial growth bound \eqref{eq:Ngrow2}
  and the monotonicity \eqref{eq:Nmonot}.
  Then, for any bounded nonempty subset $B \subset D$, 
  there exist constants $\varsigma \geq 1$, $\digamma \geq 1$
  and $\alpha \geq 1$ such that, for any $d \in B$, we have
  \begin{equation*}
    \norm[\big]{\Dif^n S(d)}_{\Bcal^n\group{\Dcal; \Ucal}}
    \leq \; (n!)^s \tilde{\varsigma} \tilde{\digamma}^n
  \end{equation*}
  for all \(n \in \Nbbb^*\), 
  where
  \begin{equation*}
    \tilde{\varsigma} = \frac{1}{c_\kappa \alpha \varsigma}
    \quad\text{and}\quad
    \tilde{\digamma} = c_\kappa \alpha^{2} \varsigma^{2} \digamma^{3} \text{.}
  \end{equation*}
\end{theorem}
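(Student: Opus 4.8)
The plan is to deduce this statement exactly as Theorem~\ref{theorem:main} was deduced from Theorem~\ref{theorem:analyticDS}, the only difference being that here the residual operator $R$ in~\eqref{eq:residualH2} is merely $s$-Gevrey (and not a polynomial), so the constant $\digamma$ need no longer be taken equal to $1$. First I would fix a bounded nonempty subset $B \subset D$. By Theorem~\ref{theorem:solvH2}, and in particular the a priori bound~\eqref{eq:SboundH2}, the image $S(B)$ is a bounded subset of $\Ucal$, so I would choose a bounded nonempty $V \subset \Ucal$ with $S(B) \subset V$. Applying Proposition~\ref{proposition:DRH2} to the pair $(B, V)$ produces constants $\varsigma, \digamma \geq 1$ such that $\norm[\big]{\Dif^n R(d, S(d))}_{\Bcal^n\group{\Dcal \times \Ucal; \Rcal}} \leq (n!)^s \varsigma \digamma^n$ for every $n \in \Nbbb$ and every $d \in B$, since $\group{d, S(d)} \in B \times V$; applying Proposition~\ref{proposition:SH2} to the same pair produces $\alpha \geq 1$ with $\norm[\big]{\group[\big]{\Dif_2 R\group{d, S(d)}}^{-1}}_{\Bcal\group{\Rcal; \Ucal}} \leq \alpha$ for all $d \in B$.

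Next I would invoke Remark~\ref{remark:locality}, which says that the inductive argument behind Lemma~\ref{lemma:boundsDS} only uses the bound on $\Dif^n R\group{d, S(d)}$ and the stability bound~\eqref{eq:OpEqImpl} at the individual point $d$ under consideration; here the global solution map $S$ plays the role of the local map $S^\star$. Hence, for every fixed $d \in B$, one obtains $\norm[\big]{\Dif^n S(d)}_{\Bcal^n\group{\Dcal; \Ucal}} \leq (n!)^s \alpha^{2n-1} \varsigma^{2n-1} \digamma^{3n-2} \kappa_n$ for all $n \in \Nbbb^*$. Finally I would insert $\kappa_n \leq c_\kappa^{n-1}$ from Lemma~\ref{lemma:kappabound} and rewrite the right-hand side as $(n!)^s \tilde{\varsigma} \tilde{\digamma}^n$; using $\digamma \geq 1$ one may replace the sharper prefactor $\frac{1}{c_\kappa \alpha \varsigma \digamma^2}$ coming from Theorem~\ref{theorem:analyticDS} by the slightly larger $\tilde{\varsigma} = \frac{1}{c_\kappa \alpha \varsigma}$ stated here, keeping $\tilde{\digamma} = c_\kappa \alpha^{2} \varsigma^{2} \digamma^{3}$. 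Equivalently, one simply cites Theorem~\ref{theorem:analyticDS} and then weakens the prefactor.

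The step I expect to require the most care --- and which is the only genuine content beyond bookkeeping --- is the \emph{uniformity} over $B$: a single triple $(\varsigma, \digamma, \alpha)$ must work simultaneously for all $d \in B$, so that the asserted bound is uniform on $B$. This is exactly what the combination of the ``bounded-set to bounded-set'' property of the global solution map $S$ (Theorem~\ref{theorem:solvH2}) with the ``uniform over bounded subsets'' formulations of Propositions~\ref{proposition:DRH2} and~\ref{proposition:SH2} delivers: once $V \supset S(B)$ is fixed there is no need to shrink neighbourhoods further, in contrast to the purely local Assumption~\ref{ass:GsReg}. Everything else reduces to tracking the constants.
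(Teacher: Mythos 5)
Your proposal is correct and follows exactly the paper's route: the paper's proof likewise applies Theorem~\ref{theorem:analyticDS} after noting that Propositions~\ref{proposition:DRH2} and~\ref{proposition:SH2}, the a priori bound~\eqref{eq:SboundH2}, and Remark~\ref{remark:locality} supply its premises uniformly over $B$. Your additional observation that the stated prefactor $\tilde{\varsigma} = \frac{1}{c_\kappa \alpha \varsigma}$ is just a weakening (by $\digamma \geq 1$) of the sharper $\frac{1}{c_\kappa \alpha \varsigma \digamma^2}$ from Theorem~\ref{theorem:analyticDS} is accurate and correctly resolves the only apparent discrepancy.
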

\begin{proof}
  Simply apply Theorem~\ref{theorem:analyticDS},
  after noting that the Propositions~\ref{proposition:DRH2} and~\ref{proposition:SH2}
  together with equation~\eqref{eq:SboundH2} and Remark~\ref{remark:locality}
  guarantee that its premises are fulfilled.
\end{proof}
%
\subsection{Regularity of the parameters-to-data and the parameters-to-solution mapping.}
Having shown the regularity of the data-to-solution mapping $S$,
we next consider the smoothness of the parameters-to-data mapping 
$\tilde{d}$ and of the parameters-to-solution mapping $\hat{u}$.
To this end, we make the following assumption:
\begin{assumption}\label{assumption:Vbounds}
  The parametric domain mapping $\Vbfm \colon \square \to C^{1,1}(\hat{G}; \Rbbb^m)$
  satisfies \eqref{eq:VBiHoel} and 
  admits bounded mixed partial derivatives of arbitrary order in the sense that,
  for some constants $\mu_{\Vbfm}, \kappa_{\Vbfm} > 0$, $s \geq 1$
  and a $\gammabfm \in \ell^1(\Nbbb^*)$, these satisfy
  \begin{equation*}
    \norm[\big]{\pdif^{\alphabfm} \Vbfm[\ybfm]}_{C^{1,1}(\hat{G}; \Rbbb^m)}
    \leq \group[\big]{\norms{\alphabfm}!}^s \mu_{\Vbfm} \kappa_{\Vbfm}^{\norms{\alphabfm}} \gammabfm^{\alphabfm}
  \end{equation*}
  for all $\ybfm\in \square$ and all multi-indices $\alphabfm$.
\end{assumption}

Now, combining Theorem~\ref{theorem:Gevreyparametriccomp}
together with the results found in \cite[Lemmas~3 and~4]{HS22} 
immediately imply smoothness of the parameters-to-data mapping $\tilde{d}$.
Moreover, as the parametric domain mapping $\Vbfm$ is a bounded 
map by considering $\alphabfm = \zerobfm$, this is also true
for the parameters-to-data mapping $\tilde{d}$.
Therefore, we also have smoothness of the parameters-to-solution
mapping $\hat{u}$ by combining Theorems~\ref{theorem:Gevreyparametriccomp}
and~\ref{theorem:main}, as $\tilde{d}(\square) \subset D$ holds.
Specifically, this gives the following result:
\begin{theorem}\label{theorem:duboundsH2}
  Let $\Nfrak$ be a polynomial nonlinearity satisfying \eqref{eq:Npoly},
  and the monotonicity \eqref{eq:Nmonot}
  and suppose that Assumption~\ref{assumption:Vbounds} holds.
  Then, both, the parameters-to-data mapping $\tilde{d} \colon \square \to D$
  and the parameters-to-solution mapping
  $\hat{u} \colon \square \to H^1_{\mathrm{D}}\group{\hat{G}}$,
  have bounded mixed partial derivatives of arbitrary order
  and there exist constants $\mu_{\tilde{d}}, \kappa_{\tilde{d}} \geq 1$
  and $\mu_{\hat{u}}, \kappa_{\hat{u}} \geq 1$ such that
  \begin{equation*}
    \norm[\big]{\pdif^{\alphabfm} \tilde{d}[\ybfm]}_{\Dcal}
    \leq \group[\big]{\norms{\alphabfm}!}^s \mu_{\tilde{d}} \kappa_{\tilde{d}}^{\norms{\alphabfm}} \gammabfm^{\alphabfm}
  \end{equation*}
  and
  \begin{equation*}
    \norm[\big]{\pdif^{\alphabfm} \hat{u}[\ybfm]}_{H^2(\hat{G})}
    \leq \group[\big]{\norms{\alphabfm}!}^s \mu_{\hat{u}} \kappa_{\hat{u}}^{\norms{\alphabfm}} \gammabfm^{\alphabfm}
  \end{equation*}
  hold for all $\ybfm\in \square$ and all multi-indices $\alphabfm$.
\end{theorem}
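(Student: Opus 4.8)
The plan is to mirror the two-stage argument used for Theorem~\ref{theorem:dubounds}: first obtain weighted $s$-Gevrey bounds for the parameters-to-data mapping $\tilde{d}$, and then compose with the data-to-solution mapping $S$ by means of Theorem~\ref{theorem:Gevreyparametriccomp}. Throughout, $s$ denotes the maximum of the Gevrey index from Assumption~\ref{assumption:Vbounds} and that of the nonlinearity in \eqref{eq:sGN}.

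First I would handle $\tilde{d}[\ybfm] = \group[\big]{\tilde{\Abfm}[\ybfm], \tilde{b}[\ybfm], \tilde{f}[\ybfm]}$. Writing $\Jbfm[\ybfm] = \Dif_\xbfm \Vbfm[\ybfm]$, each component is a composition of $\ybfm \mapsto \Jbfm[\ybfm]$ (which inherits the weighted $s$-Gevrey bounds of Assumption~\ref{assumption:Vbounds}, now in $W^{1,\infty}\group{\hat{G}}$ rather than $C^{1,1}\group{\hat{G}}$, since $\Vbfm[\ybfm] \in C^{1,1}$) with the fixed algebraic maps $\Jbfm \mapsto \det\Jbfm$, $\Jbfm \mapsto \Jbfm^{-1}$, $\Jbfm \mapsto \Jbfm^{-\trans}$ and with the pointwise products against the fixed data $\hat{a}, \hat{b}, \hat{f}$. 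By the uniformity bound \eqref{eq:VBiHoel} the matrix $\Jbfm[\ybfm](\xbfm)$ stays in a fixed compact set of invertible matrices on which these algebraic maps are real analytic, and pointwise multiplication is a bounded bilinear map on $W^{1,\infty}\group{\hat{G}}$ and on $L^\infty\group{\hat{G}}$ (cf.\ \cite[Prop.~1.1]{BCD11}). Hence Theorem~\ref{theorem:Gevreyparametriccomp} applied with these analytic outer maps, together with the product estimates of \cite[Lemmas~3 and~4]{HS22}, yields constants $\mu_{\tilde{d}}, \kappa_{\tilde{d}} \geq 1$ with $\norm{\pdif^{\alphabfm} \tilde{d}[\ybfm]}_{\Dcal} \leq \group[\big]{\norms{\alphabfm}!}^s \mu_{\tilde{d}} \kappa_{\tilde{d}}^{\norms{\alphabfm}} \gammabfm^{\alphabfm}$ for all $\ybfm \in \square$ and all $\alphabfm$, which is the first asserted bound. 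Taking $\alphabfm = \zerobfm$ shows $\tilde{d}(\square)$ is bounded, and the ellipticity and non-negativity computations at the start of Section~\ref{sec:SpatReg} give $\tilde{d}(\square) \subset D$.

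With $\tilde{d}(\square) \subset D$ bounded, I would next invoke Theorem~\ref{theorem:mainH2} on a bounded set $B \subset D$ containing $\tilde{d}(\square)$: under the hypotheses \eqref{eq:Ngrow2}, \eqref{eq:Nmonot} and the derivative bounds \eqref{eq:sGN}, it gives $\norm{\Dif^n S(d)}_{\Bcal^n\group{\Dcal; \Ucal}} \leq (n!)^s \tilde{\varsigma} \tilde{\digamma}^n$ uniformly for $d \in B$, so $S$ is $s$-Gevrey at every point of $\tilde{d}(\square)$, in particular at $x = \tilde{d}[\ybfm]$. Since $\hat{u} = S \circ \tilde{d}$, Theorem~\ref{theorem:Gevreyparametriccomp} applied with $P = \tilde{d}$ (weighted $s$-Gevrey mixed partials, weight $\gammabfm$, constants $\mu_{\tilde{d}}, \kappa_{\tilde{d}}$) and $M = S$ ($s$-Gevrey, constants $\tilde{\varsigma}, \tilde{\digamma}$) shows $\hat{u}$ has weighted $s$-Gevrey mixed partial derivatives of arbitrary order with weight $\gammabfm$; reading off the constants from that theorem, $\mu_{\hat{u}} \isdef \tilde{\varsigma}\,\tilde{\digamma}\mu_{\tilde{d}}/(\tilde{\digamma}\mu_{\tilde{d}}+1)$ and $\kappa_{\hat{u}} \isdef (\tilde{\digamma}\mu_{\tilde{d}}+1)\kappa_{\tilde{d}}$ (both $\geq 1$) give the second asserted bound.

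The main obstacle is the first step, namely certifying the weighted $s$-Gevrey bound of $\tilde{d}$ in the \emph{stronger} $W^{1,\infty}\group{\hat{G}}$-topology demanded by the choice $\Ucal = H^1_0\group{\hat{G}} \cap H^2\group{\hat{G}}$ and $\Dcal$ in \eqref{eq:RegSpc}. This requires checking that the $C^{1,1}$-bounds on $\Vbfm$ transfer to $W^{1,\infty}$-bounds on $\Jbfm$, that the matrix determinant and inverse are analytic on the matrix range dictated by \eqref{eq:VBiHoel}, and that the bookkeeping of constants through the repeated applications of the composition theorem and the product lemmas of \cite{HS22} closes; all of this is routine but needs care. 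Everything after that is a direct appeal to Theorems~\ref{theorem:mainH2} and~\ref{theorem:Gevreyparametriccomp}. We note in passing that the nonlinearity relevant in this section is the non-polynomial, $s$-Gevrey one characterised by \eqref{eq:Ngrow2} and \eqref{eq:sGN}, so the hypotheses should be read accordingly.
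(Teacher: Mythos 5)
Your proposal follows essentially the same route as the paper, whose proof is sketched in the paragraph preceding the theorem: first derive the weighted $s$-Gevrey bounds for $\tilde{d}$ from Assumption~\ref{assumption:Vbounds} via \cite[Lemmas~3 and~4]{HS22} and Theorem~\ref{theorem:Gevreyparametriccomp}, observe $\tilde{d}(\square)\subset D$ is bounded (taking $\alphabfm=\zerobfm$), and then obtain the bound on $\hat{u}=S\circ\tilde{d}$ by combining Theorem~\ref{theorem:mainH2} with a second application of Theorem~\ref{theorem:Gevreyparametriccomp}. You also correctly flag two apparent carry-overs from Theorem~\ref{theorem:dubounds} in the statement (the hypothesis ``polynomial nonlinearity satisfying \eqref{eq:Npoly}'' and the codomain $H^1_{\mathrm{D}}(\hat{G})$, which in Section~\ref{sec:SpatReg} should be \eqref{eq:Ngrow2}, \eqref{eq:Nmonot}, \eqref{eq:sGN} and $H^1_0(\hat{G})\cap H^2(\hat{G})$), and you cite Theorem~\ref{theorem:mainH2} where the paper's surrounding text says Theorem~\ref{theorem:main}, which is clearly what was intended.
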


\subsection{Nonsmooth reference domain}
\label{sec:RegSolAn}
%
The regularity shift \eqref{eq:regb} is under the 
assumption of homogeneous Dirichlet boundary conditions on $\partial\hat{G}$
that itself is assumed to be $C^{1,1}$-smooth.
For mixed boundary conditions and/or polytopal domain, the
$H^2$ regularity shift in $\hat{G}$ in \eqref{eq:regb}
is known to fail in general. 
A regularity shift only holds in larger, 
corner-weighted (in dimension $m=2$)
or in corner-edge weighted (in dimension $m=3$) 
$H^2(\hat{G})$ spaces of Kondrat'ev type. 
With this choice of spaces in the abstract setting \eqref{eq:RegSpc},
the abstract theory from Section~\ref{sct:abstract} will also apply.

To demonstrate this, we choose to assume that
\begin{equation*}
  m=2 \quad \text{and} \quad \mbox{$\hat{G}$ is a polygon with finite set $\Ccal$ 
    of corner points $\boldsymbol c$.}
\end{equation*}
Then, we require the hilbertian Kondrat'ev spaces $\Kcal^k_a(\hat{G})$ 
given for $k\in \Nbbb_0$ and $a\in \Rbbb$ by 
\begin{equation*}
  \Kcal^k_a(\hat{G}) 
  \isdef \{ u:\hat{G}\to \Rbbb \mid \rho_\Ccal^{|\alphabfm|-a} \partial^\alphabfm u \in L^2(\hat{G}), \; |\alphabfm|\leq k \} . 
\end{equation*}
To also specify the data regularity in $\hat{G}$, we introduce 
\begin{equation*}
  \Wcal^{k,\infty}(\hat{G}) 
  \isdef
  \{
  u:\hat{G}\to \Rbbb \mid \rho_\Ccal^{|\alphabfm|} \partial^\alphabfm u \in L^\infty(\hat{G}), \; |\alphabfm|\leq k
  \} .
\end{equation*}
Here, $\rho_\Ccal(x)>0$ in $\hat{G}$ denotes the product of the distance of $x\in \hat{G}$
to the corners:
\begin{equation*}
  \rho_\Ccal(x) \isdef \prod_{\boldsymbol{c}\in \Ccal} |x-\boldsymbol{c}|, \;\; x\in \hat{G}.
\end{equation*}
Evidently, 
$W^{k,\infty}(\hat{G}) \subseteq \Wcal^{k,\infty}(\hat{G})$ holds for all $k\in \Nbbb_0$.

We now recall the following result from \cite[Theorem~1.1]{BNL17}.
\begin{proposition}\label{prop:K2aReg}
  Assume $m=2$ and that $\hat{G}$ is a polygon 
  with a finite number of straight sides. 
  Assume further in \eqref{eq:PDE} $\hat{\Gamma}_N = \emptyset$, i.e.\
  homogeneous Dirichlet boundary conditions on $\hat{\Gamma}_D$, and 
  that\footnote{Actually, $\rho_\Ccal^2 b \in \Wcal^{1,\infty}\group{\hat{G}; \Rbbb}$ is sufficient.}
  $b \in \Wcal^{1,\infty}\group{\hat{G}; \Rbbb}$.

  Then, the differential operator 
  $P \colon w\mapsto \Div\group{\Abfm \Grad w} + b w$
  is an isomorphism 
  $P\in \Lcal_{is}(\Kcal^2_{1+a}\group{\hat{G}}, \Kcal^0_{a-1}\group{\hat{G}})$
  for $a\in (-a_0,a_0)$ for some $a_0>0$ 
  (depending on $\Abfm$ and on the corner-angles of $\hat{G}$).
  Moreover, the inverse $P^{-1} \in \Lcal_{is}(\Kcal^0_{a-1}\group{\hat{G}},
  \ \Kcal^2_{1+a}\group{\hat{G}})$ depends analytically on the data 
  \begin{equation*}
    d = (\Abfm,b,f) \in 
    \Dcal_r =
    \group[\Big]{\Wcal^{1,\infty}\group{\hat{G}; \Rbbb^{m \times m}_{\textrm{sym}}}}
    \times \Wcal^{1,\infty}\group{\hat{G}; \Rbbb}
    \times \Kcal^0_{a-1}\group{\hat{G}}.
  \end{equation*}
\end{proposition}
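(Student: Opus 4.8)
The plan is to read off the isomorphism assertion directly from \cite[Theorem~1.1]{BNL17} and then to upgrade it to analytic dependence on the data by a Neumann-series argument — which may equivalently be phrased as an application of Theorem~\ref{theorem:analyticDS} to an affine residual map. So, first I would record that under the stated hypotheses — $m=2$, $\hat{G}$ a straight-sided polygon, homogeneous Dirichlet data on $\hat{\Gamma}_D$, $\Abfm \in \Wcal^{1,\infty}\group{\hat{G}; \Rbbb^{m\times m}_{\textrm{sym}}}$ uniformly elliptic in the sense of $D_{\Abfm}$, and $\rho_\Ccal^2 b \in \Wcal^{1,\infty}\group{\hat{G};\Rbbb}$ (which is all that the zero-order term contributes, cf.\ the footnote) — the operator $P\colon w\mapsto \Div\group{\Abfm \Grad w} + bw$ maps $\Kcal^2_{1+a}\group{\hat{G}}$ boundedly into $\Kcal^0_{a-1}\group{\hat{G}}$, because $\Abfm$ and the weighted coefficient $\rho_\Ccal^2 b$ act as multipliers adapted to the Kondrat'ev weight scale. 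Its invertibility for every weight exponent $a$ in a symmetric interval $(-a_0,a_0)$ with $a_0>0$ sufficiently small is precisely the content of \cite[Theorem~1.1]{BNL17}: there the Mellin symbol of $P$ at each corner $\boldsymbol{c}\in\Ccal$ is analysed, and $a_0$ is taken so small that the associated critical strip avoids all of the corner exponents, which depend on the opening angles of $\hat{G}$ and on $\Abfm$ near $\boldsymbol{c}$. This step is carried out simply by transcribing that theorem into the present notation.

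Second I would establish the analytic dependence. The assignment $d=(\Abfm,b,f)\mapsto P$, viewed as a map from $\Dcal_r$ into $\Bcal\group{\Kcal^2_{1+a}\group{\hat{G}}; \Kcal^0_{a-1}\group{\hat{G}}}$, is affine-linear in $(\Abfm,b)$ and constant in $f$, hence entire analytic. Since $\Lcal_{is}$ is open in $\Bcal\group{\Kcal^2_{1+a}\group{\hat{G}}; \Kcal^0_{a-1}\group{\hat{G}}}$ and the inversion $T\mapsto T^{-1}$ is analytic on it — locally at $T_0$ it equals the convergent series $T^{-1}=\group[\big]{\sum_{k\geq 0}\group{-T_0^{-1}(T-T_0)}^k}T_0^{-1}$ — the composite $d\mapsto P^{-1}$ is analytic into $\Lcal_{is}\group{\Kcal^0_{a-1}\group{\hat{G}}, \Kcal^2_{1+a}\group{\hat{G}}}$ on the open subset of $\Dcal_r$ on which $P\in\Lcal_{is}$, which by the first step contains every admissible datum once $a\in(-a_0,a_0)$ is fixed; composing further with the bilinear evaluation $(T,f)\mapsto Tf$ then yields analytic dependence of the solution $w=P^{-1}f$ on the full triple $d$. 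Equivalently, putting $R(d,w)\isdef Pw-f$ — which is affine in $(d,w)$ and hence real analytic, with $\Dif_2 R(d,w)=P$ the isomorphism above — one reads off the analytic dependence from Theorem~\ref{theorem:analyticDS} with $s=1$; the uniform bound on $\norm{P^{-1}}$ over bounded sets of data required for \eqref{eq:OpEqImpl} follows from the affine, hence locally Lipschitz, dependence of $P$ on $(\Abfm,b)$ together with the openness of $\Lcal_{is}$.

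The only substantive obstacle is the first step, the weighted corner-regularity shift; it is available verbatim as \cite[Theorem~1.1]{BNL17} and is used here as a black box, whereas the operator-theoretic reasoning in the second step is routine.
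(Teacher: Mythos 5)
Your proposal is correct and matches the paper's treatment: the paper gives no proof of this proposition at all, simply recalling it from \cite[Theorem~1.1]{BNL17}, and you likewise invoke that theorem as a black box for the isomorphism property. The additional Neumann-series argument you supply for the analytic dependence of $P^{-1}$ on $(\Abfm,b,f)$ — affine dependence of $P$ on the coefficients, openness of the set of isomorphisms, analyticity of inversion, boundedness of the evaluation $(T,f)\mapsto Tf$ — is the standard and correct justification of the ``Moreover'' clause, and your observation that $\rho_\Ccal^2 b\in\Wcal^{1,\infty}$ suffices for the zero-order term to be a bounded multiplier $\Kcal^2_{1+a}\to\Kcal^0_{a-1}$ is consistent with the paper's footnote.
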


Next, we recall that $\Kcal^2_{1+a}(\hat{G})$ is (at least) continuously embedded
into $L^\infty(\hat{G})$, when $a \geq 0$ and $m = 2$, see \cite[Theorem 27 (i)]{DHSS19}.
Therefore, we set the regularity spaces as
\begin{align}\label{eq:RegSpcw}
  \nonumber
  \Dcal_a &\isdef \Wcal^{1,\infty}\group{\hat{G}; \Rbbb^{m \times m}_{\textrm{sym}}}
  \times \Wcal^{1,\infty}\group{\hat{G};\Rbbb}
  \times \Kcal^0_{a-1}\group{\hat{G}},
  \\
  \Ucal_a &\isdef H_0^1\group{\hat{G}} \cap \Kcal^2_{1+a}\group{\hat{G}},
  \\
  \nonumber
  \Rcal_a &\isdef \Kcal^0_{a-1}\group{\hat{G}}
\end{align}
and the subset of admissible data
\begin{equation*}
  D_a \isdef D_{\Abfm}
  \times D_b
  \times \Kcal^0_{a-1}\group{\hat{G}} ,
\end{equation*}
where
\begin{equation*}
  D_{\Abfm} \isdef \groupb[\bigg]{\Abfm \in \Wcal^{1,\infty}\group{\hat{G}; \Rbbb^{m \times m}_{\textrm{sym}}} : \essinf_{\xbfm \in \hat{G}} \min_{\vbfm \in \Rbbb^m \setminus \{\zerobfm\}}\frac{\vbfm^\trans \Abfm(\xbfm) \vbfm}{\vbfm^\trans \vbfm} \geq c_a c_{\Vbfm}^{-m-2} \defis c_\Abfm}
\end{equation*}
and $D_b \isdef \groupb[\big]{b \in \Wcal^{1,\infty}\group{\hat{G};\Rbbb} : \essinf_{\xbfm \in \hat{G}} b(\xbfm) \geq 0}$.

With these definitions and results at hand, one can now obtain
analogous results as in Subsections~\ref{sec:ResEqnH2} and~\ref{sec:RegDatSolH2}.
Therefore, one again is in the abstract setting of Section~\ref{sct:abstract}
and obtains the following result.

\begin{theorem}\label{theorem:mainK2}
  Assume $m=2$, $\hat{G}\subset \Rbbb^2$ is a polygon and
  $a \in [0, a_0)$ with the constant $a_0$ as in Proposition~\ref{prop:K2aReg}.
  Furthermore, let $\Nfrak$ satisfy the polynomial growth bound \eqref{eq:Ngrow2},
  the monotonicity \eqref{eq:Nmonot} and the derivative bounds \eqref{eq:sGN}.

  Then, there is a unique global data-to-solution 
  mapping $S \colon D_a \to \Ucal_a$ such that the equation
  \begin{equation*}
    R\group[\big]{d, S(d)} = 0 \ \text{in}\ \Rcal_a
  \end{equation*}
  is fulfilled for all $d \in D_a$
  and for any bounded nonempty subset $B_a \subset D_a$
  there exist constants $\tilde{\varsigma}, \tilde{\digamma} \geq 1$
  such that
  \begin{equation*}
    \forall d\in B_r \; \forall n\in \Nbbb: \;\;
    \norm[\big]{\Dif^n S(d)}_{\Bcal^n\group{\Dcal_a; \Ucal_a}}
    \leq 
     (n!)^s \tilde{\varsigma} \tilde{\digamma}^n
  \end{equation*}
  holds for all \(n \in \Nbbb\) and $d \in B_a$.
\end{theorem}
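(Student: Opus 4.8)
The plan is to follow, step for step, the chain of results of Subsections~\ref{sec:ResEqnH2} and~\ref{sec:RegDatSolH2} developed for the $H^2$-setting, with the classical elliptic regularity shift \cite[Theorem~8.12]{GilbargTrudinger} replaced by the Kondrat'ev regularity shift of Proposition~\ref{prop:K2aReg} and the embedding $H^2(\hat G)\subset L^\infty(\hat G)$ replaced by the weighted embedding $\Kcal^2_{1+a}(\hat G)\subset L^\infty(\hat G)$ from \cite[Theorem~27 (i)]{DHSS19} (valid for $a\geq 0$, $m=2$). Once three ingredients — global solvability with a uniform a~priori bound, $s$-Gevrey bounds for $R$ on the weighted spaces, and uniform invertibility of $\Dif_2 R$ — are in place, the assertion follows from Theorem~\ref{theorem:analyticDS} together with Remark~\ref{remark:locality}, exactly as Theorem~\ref{theorem:mainH2} follows from Theorem~\ref{theorem:analyticDS}.

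\emph{Solvability and a~priori bound.} As in the proof of Theorem~\ref{theorem:solvH2}, one first solves the variational problem in $\Xcal\isdef H^1_0(\hat G)$: by the monotonicity \eqref{eq:Nmonot}, the ellipticity encoded in $D_{\Abfm}$ and the non-negativity of $b$, the operator $u\mapsto -\Div(\Abfm\Grad u)+bN(u)-f$ is strongly monotone on $\Xcal$, and it is continuous by the growth bound \eqref{eq:Ngrow2} and $H^1_0(\hat G)\subset L^q(\hat G)$; here $f$ is an admissible right-hand side because $\Kcal^0_{a-1}(\hat G)\hookrightarrow H^{-1}(\hat G)$ (a weighted Hardy inequality in the admissible range of $a$). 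This yields a unique $u_d\in\Xcal$ with $\|u_d\|_{\Xcal}\lesssim\|d\|_{\Dcal_a}$. Bootstrapping, $u_d$ solves $-\Div(\Abfm\Grad u_d)=f-bN(u_d)$, whose right-hand side lies in $\Kcal^0_{a-1}(\hat G)$ since $f\in\Kcal^0_{a-1}(\hat G)$, $b\in L^\infty(\hat G)$ and, by \eqref{eq:Ngrow2} together with the Sobolev embedding $H^1_0(\hat G)\subset L^p(\hat G)$ for every finite $p$ (as $m=2$), $N(u_d)\in L^p(\hat G)$ for every finite $p$, whence $bN(u_d)\in\Kcal^0_{a-1}(\hat G)$. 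Applying Proposition~\ref{prop:K2aReg} with $b$ replaced by $0$ — the classical Kondrat'ev isomorphism for the principal part — gives $u_d\in\Kcal^2_{1+a}(\hat G)$, hence $u_d\in\Ucal_a$, and the bound \eqref{eq:SboundH2}, i.e.\ $\|u_d\|_{\Ucal_a}\leq c_{B_a}\|d\|_{\Dcal_a}$ uniformly on bounded $B_a\subset D_a$. Since all three terms of $R(d,u_d)$ lie in $\Kcal^0_{a-1}(\hat G)$ and this space injects into $H^{-1}(\hat G)$, the identity $R(d,u_d)=0$ holds in $\Rcal_a$. Uniqueness is inherited from the $H^1_0$-problem, so a global data-to-solution map $S\colon D_a\to\Ucal_a$ with $R(d,S(d))=0$ in $\Rcal_a$ exists and maps bounded sets to bounded sets.

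\emph{$s$-Gevrey bounds for $R$.} Mirroring the proof of Proposition~\ref{proposition:DRH2}, write $R(d,u)=-\Div(\Abfm\Grad u)+bN(u)-f$. The map $(\Abfm,u)\mapsto-\Div(\Abfm\Grad u)$ is bounded bilinear from $\Wcal^{1,\infty}(\hat G;\Rbbb^{m\times m}_{\textrm{sym}})\times(H^1_0\cap\Kcal^2_{1+a})(\hat G)$ to $\Kcal^0_{a-1}(\hat G)$ — the one genuinely new estimate, a weighted Leibniz bound controlling $\rho_\Ccal^{1-a}\Abfm\,\partial^2 u$ and $(\rho_\Ccal\partial\Abfm)(\rho_\Ccal^{-a}\partial u)$ in $L^2(\hat G)$ — hence analytic and $s$-Gevrey; the map $f\mapsto-f$ is bounded linear, hence analytic, from $\Kcal^0_{a-1}(\hat G)$ to $\Rcal_a$; and $(b,u)\mapsto bN(u)$ is $s$-Gevrey as the composition, via Theorem~\ref{theorem:Gevreycomp}, of the embedding $\Ucal_a\hookrightarrow L^\infty(\hat G)$, the $s$-Gevrey Nemyckii operator $N\colon L^\infty(\hat G)\to L^\infty(\hat G)$ of Lemma~\ref{lemma:DN}, the bounded bilinear pointwise multiplication $L^\infty(\hat G)\times L^\infty(\hat G)\to L^\infty(\hat G)$, and the embedding $L^\infty(\hat G)\hookrightarrow\Kcal^0_{a-1}(\hat G)$. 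By linearity of differentials the sum is $s$-Gevrey, and bookkeeping of the constants (using the uniform derivative bounds of Lemma~\ref{lemma:DN} over the bounded range of $u$) gives, for any bounded $B_a\subset D_a$ and $V\subset\Ucal_a$, constants $\varsigma,\digamma\geq 1$ with $\|\Dif^n R(d,u)\|_{\Bcal^n(\Dcal_a\times\Ucal_a;\Rcal_a)}\leq(n!)^s\varsigma\digamma^n$ for all $n$, $d\in B_a$, $u\in V$.

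\emph{Uniform invertibility of $\Dif_2 R$ and conclusion.} Mirroring the proof of Proposition~\ref{proposition:SH2}, $\Dif_2 R(d,u)\,u_1=-\Div(\Abfm\Grad u_1)+b\,\Nfrak'(u)\,u_1$ with $b\,\Nfrak'(u)\in L^\infty(\hat G)$ non-negative by \eqref{eq:Nmonot}. Lax--Milgram on $H^1_0(\hat G)$ solves $\Dif_2 R(d,u)u_1=w$ for $w\in\Kcal^0_{a-1}(\hat G)\hookrightarrow H^{-1}(\hat G)$ with $\|u_1\|_{H^1_0}\lesssim\|w\|_{\Rcal_a}$ uniformly over bounded $B_a,V$; since $-\Div(\Abfm\Grad u_1)=w-b\,\Nfrak'(u)u_1\in\Kcal^0_{a-1}(\hat G)$ (as $u_1\in L^p(\hat G)$ for every finite $p$), Proposition~\ref{prop:K2aReg} with $b$ replaced by $0$ yields $u_1\in\Kcal^2_{1+a}(\hat G)$ with $\|u_1\|_{\Ucal_a}\lesssim\|w\|_{\Rcal_a}$; injectivity follows by testing the homogeneous equation with $u_1$. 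Hence $\Dif_2 R(d,u)\in\Bcal(\Ucal_a;\Rcal_a)$ is an isomorphism with inverse bounded by some $\alpha\geq 1$ uniformly over bounded $B_a,V$. Applying the last two ingredients with $B_a$ and $V\isdef S(B_a)$ (bounded by the first ingredient), Remark~\ref{remark:locality} shows the hypotheses of Lemma~\ref{lemma:boundsDS} hold at every $d\in B_a$, and Theorem~\ref{theorem:analyticDS} gives the claimed bound with $\tilde\varsigma=1/(c_\kappa\alpha\varsigma\digamma^2)$ and $\tilde\digamma=c_\kappa\alpha^2\varsigma^2\digamma^3$ (enlarging $\tilde\varsigma$ to make it $\geq 1$), the case $n=0$ being the a~priori bound. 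The hard part will be the second and third ingredients, namely verifying that every lower-order and data contribution really lands in $\Kcal^0_{a-1}(\hat G)$ with constants uniform over bounded data sets — the weighted Leibniz estimate for $\Div(\Abfm\Grad\cdot)$ and the inclusions $L^p(\hat G)\hookrightarrow\Kcal^0_{a-1}(\hat G)\hookrightarrow H^{-1}(\hat G)$ — so that the Kondrat'ev shift of Proposition~\ref{prop:K2aReg} applies; everything else is the bookkeeping of Section~\ref{sct:abstract}.
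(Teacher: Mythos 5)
Your proof is correct and takes essentially the same approach as the paper, which gives Theorem~\ref{theorem:mainK2} only a one-line ``proof by analogy'' with Subsections~\ref{sec:ResEqnH2} and~\ref{sec:RegDatSolH2}. You have correctly identified the two substitutions that make the analogy work — the Kondrat'ev regularity shift of Proposition~\ref{prop:K2aReg} in place of \cite[Theorem~8.12]{GilbargTrudinger}, and the embedding $\Kcal^2_{1+a}(\hat G)\hookrightarrow L^\infty(\hat G)$ in place of $H^2(\hat G)\hookrightarrow L^\infty(\hat G)$ — and you have flagged the one genuinely new estimate (the weighted Leibniz bound showing $\Div(\Abfm\Grad\cdot)\colon \Wcal^{1,\infty}\times\Ucal_a\to\Kcal^0_{a-1}$ is bounded bilinear) together with the auxiliary embeddings $L^2(\hat G)\hookrightarrow\Kcal^0_{a-1}(\hat G)\hookrightarrow H^{-1}(\hat G)$ for $0\le a<1$ needed to run the bootstrap, none of which the paper spells out. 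Your final reduction to Theorem~\ref{theorem:analyticDS} via Remark~\ref{remark:locality}, your note that $n=0$ requires the a priori bound separately, and your remark that $\tilde\varsigma$ must be enlarged to satisfy the stated $\tilde\varsigma\ge 1$ are all accurate bookkeeping.
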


\begin{remark}\label{rmk:MixBCs}
  Theorem~\ref{theorem:mainK2} was formulated for homogeneous 
  Dirichlet boundary conditions on all of $\partial \hat{G}$. 
  For mixed boundary conditions as formulated in 
  \eqref{eq:PDE} with $|\hat{\Gamma}_D|>0$ corresponding results hold, 
  referring to \cite[Theorem~5.4]{BNL17}.
\end{remark}
%
\section{Conclusion}\label{sec:conclusio}
In the present article, we investigated the regularity of
mappings between Banach spaces. Our main result here extends
the implicit mapping theorem known for finite smoothness and
holomorphy to the real analytic and Gevrey class situation,
and is not only qualitative but is able to give quantitative bounds
on the Fr\'echet derivatives of the implicit mapping using
quantitative bounds on the Fr\'echet derivatives of the residual mapping,
i.e.\ the mapping which is used to define it. Moreover, we also supplied
results that qualitatively and quantitatively cover the regularity
of the composition of mappings between Banach spaces and the composition
of a mapping between Banach spaces and a (possibly nonlinear) countable
parametric expansion with values in a Banach space.

Applying the quantitative version of the real analytic or
the Gevrey class implicit mapping theorem to residual equations
for partial differential equations amounts to a new methodology
to prove regularity of the dependence of PDE solutions on their data.
In particular, combining this with the quantitative regularity shown
for the composition with a parametric expansion yields the type of
parametric regularity results for PDE solutions that depend on inputs,
which are represented in an affine-parametric manner
in terms of some frame in the data space,
as for example is common in uncertainty quantification.
We illustrated the proposed approach for the specific example of a 
semilinear elliptic PDE defined in a random domain. 
Naturally, the scope of the present approach for uncertainty quantification
is wider: it applies also to other possibly nonlinear PDEs like the 
$p$-Laplacian, Navier-Stokes equations \cite{CSZ18} 
or related eigenvalue problems \cite{CL23}.
The present regularity bounds can constitute the basis for 
a numerical analysis of various discretisation strategies such as 
sparse-grid collocation, polynomial chaos approximation, and Smolyak 
and Quasi-Monte Carlo quadrature in the parametric domain 
of the parametric problem. 
Corresponding Sobolev regularity results in corner-weighted spaces
in $\hat{G}$ are available in \cite{HS22_1031}. Details of single-level
and multi-level algorithms for the quantification of uncertainty in 
nonlinear operator equations will be developed elsewhere.

Furthermore, the (quantitative versions of the) real analytic and
the Gevrey class implicit mapping theorems as well as the theorems
covering the composition of Gevrey class mappings are likely to be useful
also beyond the field of uncertainty quantification.

Lastly, we note that further investigation into the quantitative
bounds of the real analytic and Gevrey class implicit mapping theorems
is of future interest. Especially, the question here is, if the falling factorial
technique used in \cite{CL23} or some other technique enables one
to prove sharper bounds for the derivative, 
or if the bounds we have achieved here are optimal.

\bibliographystyle{plain}

\end{document}